\newtheorem{thm}{Theorem}
\newtheorem{prop}{Proposition}
\newtheorem{lem}[prop]{Lemma}
\newtheorem{cor}[prop]{Corollary}
\theoremstyle{definition}
\newtheorem{df}[prop]{Definition} 
\newtheorem{ntn}{Notation}
\newtheorem*{conjs}{Conjecture}
\newtheorem*{qtn}{Question}
\theoremstyle{remark}
\newtheorem{rmk}[prop]{Remark} 
\newtheorem{example}[prop]{Example}
\newcommand{\R}{{\mathbb{R}}}
\newcommand{\Z}{{\mathbb{Z}}}
\newcommand{\Sk}{\mathcal{S}_k}
\newcommand{\M}{\mathcal{M}}
\newcommand{\lsh}{\mathcal{L}\hbox{-shrinking}}
\newcommand{\overbar}{\overline}
\newcommand{\al}{\alpha}
\newcommand{\ga}{\gamma}
\newcommand{\T}{\mathbb{T}}
\newcommand{\nat}{\nabla_t}
\newcommand{\bbN}{\mathbb{N}}
\newcommand{\na}{\nabla}
\DeclareMathOperator{\PS}{(\mathrm{PS})_c}
\def\H{\rm H}
\def\dx{\dot{x}}
\begin{document}

\title[Minimax periodic orbits]{Minimax periodic orbits of convex Lagrangian systems on complete Riemannian manifolds}

\author{Wenmin Gong}

\address{School of Mathematical Sciences, Beijing Normal University, Beijing, 100875, China}

\email{wmgong@bnu.edu.cn}
\date{}

\begin{abstract}
In this paper we study the existence of periodic orbits with prescribed energy levels of convex Lagrangian systems on complete Riemannian manifolds. We extend the existence results of Contreras by developing a modified minimax principle to a class of Lagrangian systems on noncompact Riemannian manifolds, namely the so called $\lsh$ Lagrangian systems. In particular, we prove that for almost every $k\in(0,c_u(L))$ the exact magnetic flow associated to a $\lsh$ Lagrangian has a contractible periodic  orbit with energy $k$. We also discuss the existence and non-existence of closed geodesics on the product Riemannian manifold $\R\times M$. 
\end{abstract}

\subjclass[2010]{58E30, 37J46, 53D40}

\maketitle

\section{Introduction}
\noindent Let $M$ be an $m$-dimensional complete Riemannian manifold without boundary. Throughout this paper we shall suppose that $L:TM\to \R$ is a smooth Lagrangian satisfying:
\begin{enumerate}
	\item[{$(C_1)$}]\textsl{Convexity:} for every $(x,v)\in TM$, the second derivative along the fibers $L_{vv}(x,v)$ is uniformly positive definite, namely, in linear coordinates on the fiber $T_xM$, there is a $A_1>0$ such that
	$$w\cdot L_{vv}(x,v)\cdot w\geq A_1|w|^2\quad \forall (x,v)\in TM\;\hbox{and}\;w\in T_xM.$$
	\item[{$(C_2)$}] \textsl{Superlinearity:} For all $R\geq0$ there is $C(R)\geq 0$ such that 
	$$L(x,v)\geq R|v|-C(R).$$
		\item[{$(C_3)$}] \textsl{Boundedness:} $\sup_{x\in M}|L_v(x,0)|<+\infty$ and for every $r>0$,
		$$a(r)=\sup\limits_{\stackrel{(x,v)\in TM,}{|v|<r}}L(x,v)<+\infty,\quad b(r)=\sup\limits_{\stackrel{|w|=1,}{|v|<r}}w\cdot L_{vv}(x,v)\cdot w<+\infty.$$
\end{enumerate}
$(C_3)$  implies that the Hamiltonian associated to $L$ is convex and superlinear. Typical Lagrangians (the so called \textit{electromagnetic Lagrangians}) of the form
$$L(x,v)=\frac{1}{2}|v|^2_x+\theta_x(v)+V(x)$$
satisfy the conditions ($C_1$)--($C_3$) provided that the norm of the one-form $\|\theta\|$ and the function $V$ on $M$ are bounded, where $\|\cdot\|$ is induced by the Riemannian metric on $M$. 

The \textit{Euler-Lagrange flow} $\phi_L^t:TM\to TM$ associated to the Lagrangian $L$ defined as above satisfies $$\phi_L^t(\ga(0),\dot{\ga}(0))=(\ga(t),\dot{\ga}(t)),$$
where $\ga:\R\to M$ is a smooth solution of the Euler-Lagrange equation
\begin{equation}\label{e:lagflow}
\frac{d}{dt}\frac{\partial L}{\partial v}\big(\ga(t),\dot{\ga}(t)\big)-\frac{\partial L}{\partial x}\big(\ga(t),\dot{\ga}(t)\big)=0.
\end{equation} 
The Euler-Lagrange flow is complete, see~\cite{Co, FM}. The \textit{energy function} associated to the above Lagrangian $E_L:TM\to\R$ is defined as 
$$E_L(x,v)=L_v(x,v)[v]-L(x,v).$$
Since the Lagrangian $L$ is time-independent, every solution of (\ref{e:lagflow}) is contained in an energy hypersurface $E_L^{-1}(k)$ for some $k\in\R$. Moreover, the periodic orbits with energy $k$ of the Euler-Lagrange flow correspond to the critical points of the \emph{free period action functional} 
$$\Sk:W^{1,2}(\R/\Z,M)\times (0,\infty)\longrightarrow \R,$$
\begin{eqnarray}
\Sk(x,T)&=&\int^1_0 TL\bigg(x(t),\frac{\dot{x}(t)}{T}\bigg)dt+kT,\notag\\
&=&\int^T_0L(\ga(s),\dot{\ga}(s))ds+kT,\notag
\end{eqnarray}
where $\ga(s)=x(s/T)$. 

In this article we continue the study of the existence of   convex Lagrangians on non-compact Riemannian manifolds that we began in~\cite{Go}. There the author proved that for the flow of an electromagnetic Lagrangian $L$ every energy level $E_L^{-1}(k)$ with $k$ larger than some threshold  contains a non-contractible periodic orbit under some topological and curvature conditions provided that the set of lengths of all closed curves belonging to a given free homotopy class has a positive minimum. 
The critical energy known as \emph{Ma\~{n}\'{e} critical value} defined by
\[
c(L):=\inf\big\{k\in\R\big|\Sk(\ga)\geq0\;\hbox{for every smooth closed  curve $\ga$ on}\; M\big\}.
\]
is very useful since it marks a change in the dynamics and topology~\cite{Ma0,Mat,CIPP1,CIPP2}. 

For our purpose, an important role will be played by two critical values which are defined as
\begin{gather}
e_0(L):=\sup\limits_{x\in M}E_L(x,0),\notag\\
c_u(L):=\inf\big\{k\in\R\big|\Sk(\ga)\geq 0\;\hbox{for every smooth contractible curve $\ga$ on}\; M\notag\big\}.
\end{gather}
Here $c_u(L)$ is called \emph{Ma\~{n}\'{e} critical value of the universal cover}, see~\cite{Co1,CMP}. 

 Since we are mainly interesting in finding periodic solutions of the Euler-Lagrange equation with prescribed energies, without loss of generality we may further assume that $L$ is \textit{quadratic at infinity}, which means that there exists a sufficiently large $R>0$ such that $L(x,v)=|v|_x^2/2$ if $|v|_x>R$. This is because given $k\in\R$ and a Lagrangian $L$ satisfying $(C_1)-(C_3)$, there exists a quadratic at infinity Lagrangian $L_0$ such that $L=L_0$ on $\{E_L\leq k+1\}$, and in this case $c(L)=c(L_0)$, see~\cite{Co1}. Under this additional assumption, for all $(x,v)\in TM$ we have 
\begin{gather}
 A_2|v|^2_x-A_3\leq L(x,v)\leq A_4(1+|v|^2_x) \label{e:quadgrowth1},\\
|L_{v}(x,v)|\leq A_5(1+|v|_x)\label{e:quadgrowth3},
\end{gather}
where $A_i$, $i=2,\ldots,5$ are positive constants. 

On a compact Riemannian manifold of arbitrary dimension Contreras~\cite{Co1} showed that a general uniformly convex Lagrangian $L$ has a contractible periodic orbit with energy $k$ for almost every $k\in (\min E_L,c_u(L))$, and for every $k>c_u(L)$ in the case that $M$ is simply connected. In~\cite{Co1} he also proved that if $M\neq\T^2$ and the energy level $E^{-1}(k)$ is of contact type then $E^{-1}(k)$  contains a periodic orbit.  These existence results were generalized to many other cases, see for instance weakly exact magnetic flow~\cite{Me1},  quadratic at infinity Lagrangians with non-closed $1$-form $\theta_x(v)=L_v(x,0)v$~\cite{Pa} and so on. Except for these results we refer to~\cite{BPV,BPRV,SZ,Go} for recent related results in the case of noncompact manifolds of arbitrary dimensions.


As already observed in~\cite{Go}, periodic orbits of Lagrangian systems on a non-compact manifold may not exist in general if there are no further restrictions on the geometry or topology of a manifold even for the Lagrangian $L(x,v)=|v|^2_x/2$ rising from a Riemannian metric, see also~\cite{Th,BeG,BPV}. The aim of this article is to employ variational methods to extend Contreras's existence results~\cite{Co1} to convex Lagrangian systems on non-compact Riemannian manifolds under suitable assumptions. The main difficulty is caused by the non-compactness of the underlining manifold. Our strategy for overcoming this problem is to use the minimax principle to the action functional $\Sk$ on loops with images in a suitable compact subset of $M$. A new problem occurring when doing this is that the family of sets consisting of loops contained in a fixed compact subset of $M$ is, in general,  not positively invariant under the flow of 
the negative gradient vector field of $\Sk$ in the  usual  sense. This forces us to consider a generalized minimax principle (see Section~\ref{sec:mini}) and introduce the concept of  ``(homotopically) $\lsh$ Lagrangians" in what follows.

\subsection{Main results}

\begin{ntn}
	For any subset $U\subset M$ and any $\varepsilon>0$ we denote by
	$$B_U(\varepsilon)=\big\{x\in M|\hbox{there is a point }y\in U\;\hbox{such that}\; d_M(x,y)<\varepsilon\big\}$$ the $\varepsilon$-neighborhood of $U$ in $M$ and by $\overbar{B}_U(\varepsilon)$ its closure, where the distance function $d_M$ is induced by the Riemannian metric on $M$. 
\end{ntn}

\begin{df}
	Let $M$ be a complete Riemannian manifold.
	We say that a smooth function $L:TM\to\R$ is \textbf{Lagrangian shrinking} (denoted by \textbf{$\lsh$}) if for every bounded set $U\subseteq M$ there is a compact subset $K_0\subseteq M$, a positive number $\varepsilon$ and a smooth map{\footnote{A map defined on an arbitrary subset of a smooth manifold is said to be \textit{smooth} if it can be extended to a smooth map defined on an open neighborhood of this subset.}} $\varphi:K=\overbar{B}_{K_0}(\varepsilon)\to K_0$ such that $U\subseteq K_0$ 
	and $\varphi^*L\leq L$ on $TM|_{K}$, i.e., for all $x\in K$ and all $v\in T_xM$,  
	$L(\varphi(x),d\varphi_x(v))\leq L(x,v)$. Furthermore, whenever $K_0$ is a compact submanifold of $M$ and the composition 
	\begin{equation}\notag
	K_0	\stackrel{\varphi|_{K_0}}{\longrightarrow}  K_0 \stackrel{j}{\hookrightarrow} M
	\end{equation}
	is homotopic to the inclusion $j:K_0\hookrightarrow M$, we say that $L$ is \textbf{homotopically $\lsh$}.
\end{df}

By our convention $B_M(r)=M$ for all $r>0$, so on any closed manifold $M$ every Lagrangian $L:TM\to\R$ is (homotopically) $\lsh$ (by taking $\varphi=id$ and $K_0=M$) . For more examples of  (homotopically) $\lsh$ Lagrangians we refer to Appendix~\ref{app:lsh}.

Our main result is the following two theorems.

\begin{thm}\label{thm: mainresult1}
	Let $M$ be a complete connected Riemannian manifold, and let $L:TM\to\R$ be a Lagrangian satisfying the conditions $(C_1)-(C_3)$.  If $L$ is $\lsh$, then for almost every $\kappa
	\in (\inf E_L, c_u(L))$ the Lagrangian $L$ has a  contractible periodic orbit of positive $(L+\kappa)$-action with energy $\kappa$.
\end{thm}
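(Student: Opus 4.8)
The plan is to adapt Contreras's proof from the compact case via a generalized minimax over loop families constrained to lie in a fixed compact set, exploiting the $\lsh$ hypothesis to make these families work with the negative gradient flow of $\Sk$. First I would fix a bounded open set $U_0$ containing a chosen point and, using the $\lsh$ condition, obtain a compact set $K_0$, an $\varepsilon>0$, and the retraction-type map $\varphi:K=\overbar B_{K_0}(\varepsilon)\to K_0$ with $\varphi^*L\le L$. On the space $\Lambda_K=\{(x,T)\in W^{1,2}(\R/\Z,M)\times(0,\infty): x(\R/\Z)\subseteq K\}$ I would consider, for each $\kappa$, the constrained minimax value
$$
c(\kappa)=\inf_{\cG\in\mathcal{F}}\ \sup_{(x,T)\in\cG}\Sk(x,T),
$$
where $\mathcal F$ is a suitable nonempty family of subsets of $\Lambda_K$ (e.g. images of a fixed ``mountain-pass''-type map modeling the noncontractibility of constant loops in the free loop space, realized in $K_0$), designed so that $c(\kappa)>0$ for $\kappa<c_u(L)$. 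The map $(x,T)\mapsto(\varphi\circ x, T)$ decreases $\Sk$ because $\varphi^*L\le L$, so it pushes arbitrary loops in $M$ into $K_0$ while not raising the action; this is the mechanism that lets one compare the constrained and unconstrained minimax and guarantees $c(\kappa)$ is finite and positive in the relevant range.

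Next I would invoke the generalized minimax principle of Section~\ref{sec:mini}: along the negative gradient flow of $\Sk$, a minimizing family need not stay inside $\Lambda_K$, but composing the flowed sets with $(\varphi\circ-,\mathrm{id})$ returns them to $\Lambda_K$ without increasing $\sup\Sk$, so $c(\kappa)$ is still a critical value provided a Palais--Smale condition holds there. The monotonicity of $\kappa\mapsto c(\kappa)$ (it is nondecreasing since $\Sk$ is increasing in $\kappa$ for $T>0$) means it is differentiable at almost every $\kappa\in(\inf E_L,c_u(L))$; the classical Struwe-type ``monotonicity trick'' then yields, for each such $\kappa$, a bounded Palais--Smale sequence $(x_n,T_n)$ for $\Sk$ at level $c(\kappa)$, with $T_n$ bounded away from $0$ and $\infty$. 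Here the quadratic-at-infinity normalization and the growth bounds \eqref{e:quadgrowth1}--\eqref{e:quadgrowth3} give the a priori $W^{1,2}$-bounds, while the constraint to the compact set $K$ (together with the fact that the limiting loop, obtained from $\varphi$-corrected sequences, still lies in a compact region) supplies the compactness needed to extract a convergent subsequence. The limit $(x_\infty,T_\infty)$ is a critical point of $\Sk$, hence a contractible periodic orbit of energy $\kappa$, and $\Sk(x_\infty,T_\infty)=c(\kappa)>0$ gives the positive $(L+\kappa)$-action claim.

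The hardest part will be setting up the constrained minimax so that it is simultaneously (i) nontrivial — i.e. $c(\kappa)$ is strictly positive for $\kappa<c_u(L)$, which requires relating the constrained action infimum over loops in $K_0$ to $c_u(L)$ via the definition of the Mañé critical value of the universal cover and the $\varphi^*L\le L$ inequality — and (ii) compatible with the deformation: one must check that the modified deformation $(x,T)\mapsto(\varphi\circ x,T)$ composed with the gradient flow genuinely produces an admissible homotopy/family in $\mathcal F$, which is exactly where the generalized (rather than classical) minimax principle is needed, and where one must be careful that $\varphi$ is only defined on $K$ and that flowed loops might temporarily leave $K$ before being pushed back. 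A secondary technical point is the Palais--Smale analysis at the two ends: ruling out $T_n\to 0$ uses $\kappa>\inf E_L$ (so short loops have action bounded below by something tending to $0$ but the minimax level is a fixed positive number), and ruling out $T_n\to\infty$ uses $\kappa<c_u(L)$ together with the action estimates; these are adaptations of the corresponding steps in \cite{Co1}, but the noncompactness forces one to carry the compact set $K$ through every estimate. I would organize the proof as: (1) construction of $K_0,\varphi,K$; (2) the constrained minimax class and the lower bound $c(\kappa)>0$; (3) the modified deformation lemma and verification of the generalized minimax hypotheses; (4) the monotonicity trick and extraction of a bounded Palais--Smale sequence for a.e.\ $\kappa$; (5) compactness and passage to the limit, identifying the critical point as the desired periodic orbit.
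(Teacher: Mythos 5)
Your overall strategy is the same as the paper's: a constrained mountain-pass minimax over loops in a compact set produced by the $\lsh$ condition, the push-back map $(x,T)\mapsto(\varphi\circ x,T)$ as the substitute for positive invariance of the family, Struwe's monotonicity trick to get bounded Palais--Smale sequences for a.e.\ $\kappa$, and the Palais--Smale proposition for loops in a compact set with periods bounded away from $0$ and $\infty$. But two points you leave open are exactly where the real work lies, so they count as gaps rather than routine technicalities.

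First, you flag but do not resolve the problem that flowed loops may leave the domain of $\varphi$. The paper's fix is quantitative and is the reason for the two-tier setup $K_0\subset K=\overbar{B}_{K_0}(\varepsilon)$: the minimax is taken over families $\mathcal F_{K_0}$ of paths of loops lying in the \emph{inner} set $K_0$, the negative gradient is rescaled to a vector field of norm at most $\tau$ (also truncated below the level $c(\kappa)/4$ so the flow is positively complete despite $\M$ being incomplete as $T\to0$), and Lemma~\ref{lem:boundDist} converts the bound $\|X\|\le\tau$ into a displacement bound $\sqrt{1+2\sqrt6}\,\tau$ on the images of the loops in $M$. Choosing $\tau<\varepsilon/\sqrt{1+2\sqrt6}$ guarantees $\phi_1(\mathcal F_{K_0})\subseteq\mathcal F_K$, so $\Phi_K\circ\phi_1$ is defined and lands back in $\mathcal F_{K_0}$ without raising $\sup\Sk$. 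Without some such argument the composition $\varphi\circ\phi_1$ is simply undefined and the contradiction with the definition of $c(\kappa)$ cannot be run. Second, the mountain-pass lower bound $c(\kappa)>0$ that you describe (paths from constant loops to loops of negative action, with positivity coming from the local quadratic isoperimetric inequality) is only valid for $\kappa>e_0(L)$; for $\kappa\in(\inf E_L,e_0(L)]$ the term $T(k-E_L(x,0))$ in the lower bound for $\Sk$ has the wrong sign near some points, and the paper must switch to Taimanov-type families $\mathcal D_N$ of maps $[0,1]\times N\to\M(N)$ (Proposition~\ref{prop:mountain2}) to recover a positive minimax level. Your proposal as written covers only the upper portion of the stated energy range. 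A smaller imprecision: the upper bound on the periods $T_n$ of the Palais--Smale sequence comes from the Struwe quotient $(\mathcal S_{k_n}-\mathcal S_\kappa)/(k_n-\kappa)\le D+2$ at points of approximate differentiability of $c(\cdot)$, not from $\kappa<c_u(L)$; the condition $\kappa<c_u(L)$ is only needed to make the family $\mathcal F_{K_0}$ nonempty (i.e.\ to produce a contractible loop of negative action).
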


The key to prove Theorem~\ref{thm: mainresult1} is to employ a modified version of the classical minimax principle together with  Struwe's monotone argument (see~\cite{St2}). 

\begin{conjs}
	Under the assumptions of Theorem~\ref{thm: mainresult1} for every $k\in (\inf E_L,c_u(L))$ the Lagrangian $L$ has a  contractible periodic orbit of positive $(L+k)$-action with energy $k$.
\end{conjs}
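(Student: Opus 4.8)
The plan is to upgrade Theorem~\ref{thm: mainresult1} from almost every energy to every energy by a compactness argument in the energy parameter. Fix $k\in(\inf E_L,c_u(L))$ and, invoking the theorem, choose a sequence of \emph{good} energies $k_n\to k$ together with contractible periodic orbits $(x_n,T_n)$ that are critical points of $\mathcal{S}_{k_n}$ with $\mathcal{S}_{k_n}(x_n,T_n)=\mu(k_n)>0$, where $\mu(\kappa)$ denotes the minimax value constructed in the proof of Theorem~\ref{thm: mainresult1}. The whole problem then reduces to showing that $\{(x_n,T_n)\}$ is precompact in $W^{1,2}(\R/\Z,M)\times(0,\infty)$: once this is known, elliptic bootstrap upgrades the convergence to $C^\infty$, any limit $(x_\infty,T_\infty)$ is a critical point of $\mathcal{S}_k$, and since the action is continuous along the convergence we get $\mathcal{S}_k(x_\infty,T_\infty)=\mu(k)>0$, so the limit is a nonconstant contractible periodic orbit of positive $(L+k)$-action with energy $k$. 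Precompactness requires three ingredients: spatial confinement of the $x_n$ to one fixed compact set, a two-sided period bound $0<T_-\le T_n\le T_+<\infty$, and a uniform $W^{1,2}$ bound.

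First I would record that $\kappa\mapsto\mu(\kappa)$ is nondecreasing and continuous: monotonicity is immediate since $\mathcal{S}_\kappa(x,T)$ increases in $\kappa$ through the term $\kappa T>0$ (indeed $\partial_\kappa\mathcal{S}_\kappa=T>0$), and continuity follows from this explicit, locally uniform dependence together with the minimax characterization. In particular $\mu(k_n)\to\mu(k)$, and $\mu(k)>0$ for $k<c_u(L)$, which is precisely the feature of the minimax geometry below $c_u(L)$ exploited in Theorem~\ref{thm: mainresult1}. Spatial confinement is furnished by the $\lsh$ structure: the map $\varphi:K=\overbar{B}_{K_0}(\varepsilon)\to K_0$ attached to a bounded set enclosing the minimax class pushes every competitor, hence in particular each orbit $x_n$, into the fixed compact $K_0$ without increasing the action, so we may assume $x_n([0,1])\subseteq K_0$. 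For the lower period bound, energy conservation gives $E_L(\ga_n,\dot\ga_n)\equiv k_n$, so that
\[
\mu(k_n)=\mathcal{S}_{k_n}(x_n,T_n)=\int_0^{T_n}L_v(\ga_n,\dot\ga_n)\cdot\dot\ga_n\,ds\le C\int_0^{T_n}\big(1+|\dot\ga_n|^2\big)\,ds\le C'\big(1+\rho(k)^2\big)\,T_n,
\]
using \eqref{e:quadgrowth3} and the fact that $E_L^{-1}(k)\cap TM|_{K_0}$ bounds the speed by some $\rho(k)<\infty$; since $\mu(k_n)\ge\tfrac12\mu(k)>0$ eventually, this forces $T_n\ge T_->0$.

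The crux is the \emph{upper} period bound $T_n\le T_+$, and here the interval splits. Above $e_0(L)$ the argument is clean: for $k>e_0(L)$ the level $E_L^{-1}(k)$ lies strictly outside the zero section over $K_0$, so by the Jacobi--Maupertuis principle the orbits are, after a time reparametrization, geodesics of a (possibly non-reversible) Finsler metric $F_k$ on $K_0$ whose unit sphere bundle is $E_L^{-1}(k)$; the $(L+k)$-action is comparable to the $F_k$-length $\lambda_n$, and because $|\dot\ga_n|$ is pinched between two positive constants one has $T_n\asymp\lambda_n\le\mathrm{const}\cdot\mu(k')$ for any fixed $k'\in(k,c_u(L))$, using monotonicity $\mu(k_n)\le\mu(k')$. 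This already settles the conjecture \emph{for every} $k\in(e_0(L),c_u(L))$. The genuine difficulty is the remaining range $\inf E_L<k\le e_0(L)$, where the Hill boundary $\{x\in M: E_L(x,0)=k\}$ is nonempty: the energy level touches the zero section, the Maupertuis metric degenerates there, the speed is no longer bounded below, orbits may linger near the boundary, and the periods $T_n$ can diverge. In the a.e.\ statement the missing bound is recovered from the differentiability of $\mu$ at good energies, but along a sequence converging to a \emph{bad} $k$ the relevant difference quotients—morally the periods—may blow up; this is exactly the mechanism that keeps the problem open.

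To close the low-energy range I would attempt a concentration-compactness analysis of the possibly degenerating sequence. The essential leverage is that a definite amount of action survives, $\mu(k_n)\ge\tfrac12\mu(k)>0$, so the orbits cannot collapse to a constant. If $T_n\to\infty$ I would rescale time and localize where the action density $L_v(\ga_n,\dot\ga_n)\cdot\dot\ga_n$ concentrates, aiming to extract either a nonconstant periodic orbit of energy $k$ (the desired object) or, in the worst case, a connecting trajectory asymptotic to points of the Hill boundary. The main obstacle is to rule out the latter, i.e.\ to prove that strictly below $c_u(L)$ no action can escape into ``orbits of infinite period''; this would require either a quantitative lower bound on the period gap for energies bounded away from $c_u(L)$, or a refined Struwe-type monotonicity that remains stable as the good energies approach a bad one. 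The $\lsh$ hypothesis governs the spatial direction but gives no control over this temporal degeneration, so genuinely new input beyond Theorem~\ref{thm: mainresult1} is needed to settle the interval $(\inf E_L,e_0(L)]$.
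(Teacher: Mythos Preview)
The statement you are attempting to prove is labelled \textbf{Conjecture} in the paper; the author does not claim it and offers no proof. So there is no ``paper's own proof'' to compare against, and your proposal should be read as an attack on an open problem rather than as a reconstruction.

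You yourself recognise that the proposal is not a proof: the entire range $(\inf E_L,e_0(L)]$ is left unresolved, and you correctly identify the mechanism (the Maupertuis metric degenerates on the Hill boundary, speeds are not bounded below, and the Struwe difference quotients---which are exactly the periods---may blow up at a non-differentiability point of $\mu$). That honest diagnosis is accurate, but it means the conjecture remains open in your write-up.

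There is also a genuine gap in the part you present as settled, namely $k\in(e_0(L),c_u(L))$. Your upper period bound rests on the Jacobi--Maupertuis Finsler metric $F_k$ ``whose unit sphere bundle is $E_L^{-1}(k)$'' and on the comparison $\mu(k_n)\asymp\lambda_n\asymp T_n$. The first comparison amounts to the pointwise bound $L_v(x,v)\cdot v\ge c>0$ on $E_L^{-1}(k)$, since $\mu(k_n)=\int_0^{T_n}L_v\!\cdot\!\dot\gamma_n\,ds$ while your $\lambda_n=T_n$. But $L_v\cdot v=k+L$ on the energy level, and from the hypotheses one only gets
\[
L_v(x,v)\cdot v\ \ge\ A_1|v|^2-A_5|v|,
\]
which can be negative when $|v|<A_5/A_1$; the speed lower bound coming from $k>e_0(L)$ is only of order $\sqrt{k-e_0(L)}$, so for $k$ close to $e_0(L)$ the inequality $L_v\cdot v>0$ need not hold and $F_k$ need not be a genuine (positive) Finsler metric. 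In that regime your action--period comparison collapses, and with it the uniform upper bound on $T_n$. This is precisely why the paper has to resort to Struwe's monotonicity argument---which delivers the period bound only at points where $k\mapsto c(k)$ is Lipschitz, hence only for almost every $k$---and why the ``every $k$'' statement is posed as a conjecture rather than a theorem.
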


Previous existence results of periodic orbits of exact magnetic systems in high energy levels on noncompact manifolds are obtained in~\cite{Go}, but there only the existence of noncontractible periodic orbits was considered. The following theorem complements those results under different assumptions about the manifold and the associated Lagrangians.
\begin{thm}\label{thm: mainresult2}
	Let $M$ be a complete Riemannian manifold and let $L:TM\to\R$ be a Lagrangian satisfying the conditions $(C_1)-(C_3)$. Assume that $L$ is homotopically $\lsh$. If $M$ is simply connected and non-contractible, then for every $k\in (c_u(L),\infty)$ the Lagrangian $L$ has a  periodic orbit of positive $(L+k)$-action with energy $k$; if  $\pi_1(M)\neq0$, then for every non-trivial free homotopy class $\al\in[S^1,M]$ and for every $k
	\in (c_u(L),\infty)$ the Lagrangian $L$ has a  periodic orbit of positive $(L+k)$-action with energy $k$ in the class $\al$.
\end{thm}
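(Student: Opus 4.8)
The plan is to transplant Contreras's proofs of the corresponding statements on closed manifolds \cite{Co1} to the present setting, the crucial point being to run the relevant minimax over \emph{loops whose image lies in a fixed compact set} and to use the map $\varphi$ furnished by the homotopically-$\lsh$ hypothesis to compensate for the fact that such a class of loops is not preserved by the negative gradient flow of $\Sk$. As explained in the introduction we may assume $L$ is quadratic at infinity, so \eqref{e:quadgrowth1}--\eqref{e:quadgrowth3} hold, and we use that $c_u(L)\geq e_0(L)$, whence $k>e_0(L)$. In the first assertion fix $n\geq2$ and a smooth $f\colon S^n\to M$ representing a nonzero element of $\pi_n(M)$ (such $n$ exists since a simply connected non-contractible manifold is not weakly contractible); in the second fix a smooth loop in the class $\al$. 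Let $U$ be a bounded open set containing the image of this reference object; the homotopically-$\lsh$ property yields a compact codimension-zero submanifold-with-boundary $K_0\supseteq U$, an $\eps>0$ with $K:=\ol B_{K_0}(\eps)$ deformation retracting onto $K_0$ inside $M$, and a smooth $\varphi\colon K\to K_0$ with $\varphi^*L\leq L$ on $TM|_K$ and $j\circ\varphi|_{K_0}$ homotopic to $j$. Composing that homotopy with the deformation retraction $K\to K_0$ shows that $\varphi$, viewed as a map $K\to M$, is homotopic to the inclusion of $K$, so post-composition with $\varphi$ preserves the free homotopy class of any loop with image in $K$ and, more generally, the induced map on homotopy groups of the loop space.

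The a priori estimates are routine. Using \eqref{e:quadgrowth1} one checks that along any sequence $(x_n,T_n)$ with $x_n(\R/\Z)\subseteq K$ on which $\Sk$ is bounded, the $T_n$ remain in a compact subinterval of $(0,\infty)$ — as $T_n\to\infty$ this uses $k>e_0(L)$, as $T_n\to0$ it uses that the loops we consider have length bounded below (in the second assertion, a loop with image in $K_0$ that is noncontractible in $M$ has length at least twice $\min_{K_0}\mathrm{inj}_M>0$) — and that $\int_0^1|\dot x_n|^2$ is bounded; since $K$ is compact this gives a $W^{1,2}$-bounded sequence, and by weak lower semicontinuity of $\Sk$ in $\dot x$ together with $C^0$-continuity of the free homotopy class one can extract limits. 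The key device: whenever the gradient flow pushes a loop out of $K_0$ into $K$ we replace it by its image under $\varphi$; this does not increase $\Sk$ (by $\varphi^*L\leq L$) and does not change the relevant homotopical data (by the last sentence above). This is exactly what is needed to run the generalized minimax principle of Section~\ref{sec:mini} within loops contained in $K$.

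For the first assertion, since $M$ is simply connected $\Lambda M$ is connected and the evaluation fibration $\Om M\to\Lambda M\to M$ splits on homotopy, so $\pi_{n-1}(\Lambda M)$ contains a nonzero class $\sigma$ coming from $\pi_{n-1}(\Om M)=\pi_n(M)$, represented by a map $h_0\colon S^{n-1}\to\Lambda M$ whose loops have image in $f(S^n)\subseteq K_0$ and uniformly bounded length. Let $\mathcal{F}$ be the set of continuous $h\colon S^{n-1}\to\{(x,T):x(\R/\Z)\subseteq K\}$ with $h_*[S^{n-1}]=\sigma$, and put $c=\inf_{h\in\mathcal{F}}\max_{\theta}\Sk(h(\theta))$. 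Then $h_0\in\mathcal{F}$ gives $c<\infty$; every loop is contractible and $k>c_u(L)$ gives $\Sk\geq0$, so $c\geq0$; and $c>0$ because a map $h\in\mathcal{F}$ whose $\Sk$-values are all below a suitable $\eps_1>0$ would consist of loops so short that $h$ would be homotopic in $\Lambda M$ to the constant-loop family $\mathrm{ev}\circ h\simeq\mathrm{ev}\circ h_0$, which is nullhomotopic, contradicting $\sigma\neq0$ — this is the one place the full strength $k>c_u(L)$ (rather than $k>e_0(L)$) is used, and it is proved as in \cite{Co1}. By the generalized minimax principle together with the $\varphi$-correction, $c$ is a critical value realized along a Palais--Smale sequence with loops in $K$, which by the compactness above subconverges to a critical point $(x_*,T_*)$ with $\Sk(x_*,T_*)=c>0$; then $\ga_*(s)=x_*(s/T_*)$ is the desired contractible periodic orbit of energy $k$ with positive $(L+k)$-action. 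For the second assertion, fix the nontrivial class $\al$, set $\mathcal{C}=\{(x,T):x\in\al,\ x(\R/\Z)\subseteq K_0\}$ and $c_\al=\inf_{\mathcal{C}}\Sk<\infty$. A minimizing sequence stays in $K_0\times[\tau_0,\tau_1]$ with bounded $W^{1,2}$-norm and the $\varphi$-correction keeps it in $\mathcal{C}$, so the infimum is attained at some $(x_*,T_*)\in\mathcal{C}$; this is an \emph{unconstrained} critical point, since any small variation lands in $K$ and in the class $\al$, so applying $\varphi$ to it gives a competitor in $\mathcal{C}$ of no larger action, forcing $d\Sk(x_*,T_*)=0$. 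As before $c_\al>0$ because $k>c_u(L)$ (argued as in \cite{Co1}), so $\ga_*(s)=x_*(s/T_*)$ is a periodic orbit of energy $k$ in the class $\al$ with positive $(L+k)$-action; applying this to each nontrivial free homotopy class, together with the first assertion when $\pi_1(M)=0$, proves the theorem.

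The main obstacle is the one flagged in the introduction: the family of loops with image in a fixed compact set is not positively invariant under the negative gradient flow of $\Sk$. It is overcome by combining the generalized minimax principle of Section~\ref{sec:mini} with the map $\varphi$ from the homotopically-$\lsh$ hypothesis, which re-confines escaping loops to $K_0$ while both decreasing $\Sk$ and preserving the homotopical information ($\pi_{n-1}(\Lambda M)$-class in the first case, free homotopy class in the second); the homotopy clause in the definition of homotopically-$\lsh$ is precisely what makes this preservation possible. The remaining nontrivial input — the positivity of the minimax value, which is where the supercriticality $k>c_u(L)$ genuinely enters — is inherited essentially verbatim from Contreras's arguments.
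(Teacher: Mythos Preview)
Your overall strategy---minimax over loops with image in a fixed compact set, with the map $\varphi$ used to re-confine escaping loops while preserving the homotopical data---is exactly the paper's. Two technical points need repair. First, the definition of homotopically $\lsh$ gives only that $j\circ\varphi|_{K_0}\simeq j$; it does \emph{not} furnish a deformation retraction $K\to K_0$, so your claim that $\varphi:K\to M$ is homotopic to the inclusion of $K$ is unjustified. The paper sidesteps this by taking the minimax family $\mathcal{G}_{K_0}$ inside $\mathcal{M}(K_0)$ rather than $\mathcal{M}(K)$: the flow $\phi_t$ pushes $g\in\mathcal{G}_{K_0}$ into $\mathcal{M}(K)$, and then $\Phi_K\circ\phi_t\circ g$ is a homotopy \emph{in $\mathcal{M}(K_0)$} from $\Phi_K\circ g$ (which is $\simeq g$ by the hypothesis on $\varphi|_{K_0}$) to $\Phi_K\circ\phi_1\circ g$, so only the homotopy property on $K_0$ is ever invoked. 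Second, your period bounds are imprecise: the upper bound does not follow from $k>e_0(L)$ but from the identity $\Sk=\mathcal{S}_{c_u(L)}+(k-c_u(L))T$ together with Lemma~\ref{lem:bddbelow}, which genuinely needs $k>c_u(L)$; and in the first assertion you give no argument excluding $T_n\to0$ along the Palais--Smale sequence (your length lower bound is only for the non-contractible case). The paper handles this via Lemma~\ref{lem:timeBL}: a $\PS$ sequence with $T_n\to0$ forces $c=0$, contradicting $d(k)>0$.

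Your treatment of the second assertion is a genuine alternative to the paper's. The paper runs the rescaled negative gradient flow on a minimizing sequence to manufacture a Palais--Smale sequence at level $e(k)$ (parallel to the first part), then applies Proposition~\ref{prop:PSc}. You instead argue that the infimum over $\mathcal{C}$ is \emph{attained} by weak lower semicontinuity of $\Sk$ (valid for convex $L$) and that the minimizer is an \emph{unconstrained} critical point via the chain $\Sk(x_*,T_*)\le\Sk(\varphi\circ x,T)\le\Sk(x,T)$ for $(x,T)$ close to $(x_*,T_*)$; for such $(x,T)$ one has $x(\T)\subseteq K$ and $\varphi\circ x\sim\varphi\circ x_*\sim x_*\sim x$ in $M$ using only the homotopy property on $K_0$ and $C^0$-closeness, so the deformation-retraction issue does not bite here. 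This route is arguably cleaner and avoids the gradient-flow bookkeeping.
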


\begin{rmk}
Those periodic orbits with energy levels belonging to  $(e_0(L),\infty)$ given in Theorem~\ref{thm: mainresult1} and Theorem~\ref{thm: mainresult2} are not constant closed curves. Because any constant closed curve $x_0$ has energy $E_L(x_0,0)\leq e_0(L)$ but there each periodic orbit has energy $k>e_0(L)$.  
\end{rmk}

\subsection{Closed orbits of exact magnetic flows}
Let $\theta$ be a $1$-form on a Riemannian manifold $M$. 
Recall that an \textit{exact magnetic flow} is the flow of the Lagrangian
\begin{equation}\label{e:magneticL}
L(x,v)=\frac{1}{2}|v|_x^2+\theta_x(v)
\end{equation}
which, in dimension two case,  is a model for the motion of a particle under the effect of a magnetic field, see~\cite{Ar,No,Gi2,Ta,CMP}. It is obvious that $e_0(L)=0$ because $E_L(x,v)=|v|_x^2/2$. It was proved by Contreras \textit{et al}~\cite{CMP} that any exact magnetic flow on a closed surface possesses periodic orbits in all energy levels. Prior to this result Taimanov~\cite{Ta,Ta1} proved that for every $k\in(e_0(L),c_u(L))$ there exists a local minimizer of the free period action functional $\Sk$ for the electromagnetic Lagrangian~(\ref{e:magneticL}) over the space of absolutely continuous periodic curves on a closed surface. 
For more recent results about the existence of periodic orbits of an exact magnetic flow on closed surfaces we refer to~\cite{GiG,Sc2,Sc3,Go}.  

A direct corollary of our main result is the following. 

\begin{cor}\label{cor: mainresult3}
Let $M$ be a complete connected Riemannian manifold and let $L:TM\to\R$ be as in (\ref{e:magneticL}). Assume that  $L$ is $\lsh$ with bounded $\|\theta\|$. Then for almost every $k\in(0,c_u(L))$ the energy level $E_L^{-1}(k)$ contains a contractible periodic orbit. If in addition $M$ is simply connected and non-contractible and $L$ is homotopically $\lsh$, then for every $k
\in (c_u(L),\infty)$  the Lagrangian $L$ has a periodic orbit with energy $k$; if  $\pi_1(M)\neq0$, then for every non-trivial free homotopy class $\al\in[S^1,M]$ and for every $k
\in (c_u(L),\infty)$ the Lagrangian $L$ has a  periodic orbit of positive $(L+k)$-action with energy $k$ in the class $\al$.
\end{cor}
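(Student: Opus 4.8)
The plan is to reduce everything to Theorems~\ref{thm: mainresult1} and~\ref{thm: mainresult2}: I will check that the electromagnetic Lagrangian $L(x,v)=\tfrac12|v|_x^2+\theta_x(v)$ with $C_\theta:=\sup_{x\in M}\|\theta\|_x<\infty$ satisfies $(C_1)$--$(C_3)$, record that its energy function is $E_L(x,v)=\tfrac12|v|_x^2$, and then quote the two theorems verbatim. (The magnetic Lagrangian is not quadratic at infinity because of the linear term $\theta_x(v)$, but this causes no trouble, since the two theorems are stated for Lagrangians obeying $(C_1)$--$(C_3)$ and the passage to the quadratic-at-infinity case is internal to their proofs.)

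First I would verify the three structural conditions. Because the Riemannian metric is fixed, in linear coordinates on each fiber $L_{vv}(x,v)$ is the Gram matrix of $\langle\cdot,\cdot\rangle_x$ and is independent of $v$, so $w\cdot L_{vv}(x,v)\cdot w=|w|_x^2$; thus $(C_1)$ holds with $A_1=1$, and in $(C_3)$ one gets $b(r)=1<\infty$. By the Cauchy--Schwarz inequality $\theta_x(v)\ge-C_\theta|v|_x$, hence $L(x,v)\ge\tfrac12|v|_x^2-C_\theta|v|_x$, and completing the square gives $\tfrac12 t^2-C_\theta t\ge Rt-\tfrac12(R+C_\theta)^2$ for all $t\ge0$ and every $R\ge0$; so $(C_2)$ holds with $C(R)=\tfrac12(R+C_\theta)^2$. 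Since $L_v(x,0)=\theta_x$ we get $\sup_{x\in M}|L_v(x,0)|=C_\theta<\infty$, and for $|v|_x<r$ we have $L(x,v)\le\tfrac12 r^2+C_\theta r$, so $a(r)<\infty$; this finishes $(C_3)$. A one-line computation gives $L_v(x,v)[v]=|v|_x^2+\theta_x(v)$, whence $E_L(x,v)=L_v(x,v)[v]-L(x,v)=\tfrac12|v|_x^2$; in particular $\inf E_L=0$ and, for each $k>0$, $E_L^{-1}(k)=\{(x,v)\in TM:|v|_x^2=2k\}$.

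With these facts in hand the corollary follows immediately. Since $L$ is $\lsh$, Theorem~\ref{thm: mainresult1} (applied with $\inf E_L=0$) gives, for almost every $k\in(0,c_u(L))$, a contractible periodic orbit of positive $(L+k)$-action with energy $k$, that is, one lying on $E_L^{-1}(k)$; this is the first assertion. If in addition $M$ is simply connected and non-contractible (respectively $\pi_1(M)\neq0$) and $L$ is homotopically $\lsh$, then Theorem~\ref{thm: mainresult2} applies directly and yields, for every $k\in(c_u(L),\infty)$, a periodic orbit of positive $(L+k)$-action with energy $k$ (respectively one of positive $(L+k)$-action with energy $k$ in each prescribed non-trivial free homotopy class $\al\in[S^1,M]$). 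I do not expect any real obstacle beyond the bookkeeping above; the only hypothesis used in an essential way is the boundedness of $\|\theta\|$, which is precisely what makes $(C_2)$ and $(C_3)$ hold, everything else being immediate from the two theorems.
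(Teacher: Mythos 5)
Your proposal is correct and follows exactly the route the paper intends: verify that the bounded-$\|\theta\|$ electromagnetic Lagrangian satisfies $(C_1)$--$(C_3)$ (as already noted in the introduction), compute $E_L(x,v)=\tfrac12|v|_x^2$ so that $\inf E_L=e_0(L)=0$, and apply Theorems~\ref{thm: mainresult1} and~\ref{thm: mainresult2} directly. The paper treats this as an immediate corollary, and your bookkeeping fills in precisely the checks it leaves implicit.
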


Typical examples which satisfy the assumptions of Corollary~\ref{cor: mainresult3}  can be found in Example~\ref{eg:ls}. 

\begin{qtn}
	When does an exact magnetic flow possess a contractible periodic orbit with energy $k$ for every $k\in(0,c_u(L))$ on a non-compact Riemannian manifold? 
\end{qtn}

Let us recall that the magnetic flow of a pair $(g,\sigma)$ on $TM$ is dual to the Hamiltonian flow of $H(x,p)=\|p\|^2/2$ on $T^*M$ of $M$ with respect to the twisted symplectic form
$$\omega_\sigma=\omega_0+\pi^*\sigma$$
via the metric $g$ on $M$~\cite{CMP,Gi2,GiK}, where $\sigma$ is a closed $2$-form on $M$, $\pi:T^*M\to M$ is the natural projection and $\omega_0$ is the standard symplectic form on $T^*M$. So in order to answer the above question one might appeal to the methods from symplectic geometry, for instance, Floer theory.  Indeed, for any symplectic form $\sigma$ on a closed manifold $M$, Usher~\cite{Us} established the existence of contractible magnetic geodesics for every low energy level using Floer theory based on previous work of Ginzburg and G\"{u}rel~\cite{GiG}. For more existence results of periodic orbits of magnetic flows which are builded upon on Floer homology groups defined on the cotangent bundle $T^*M$,  we refer to~\cite{FS,GiG1,Go1,GoX,Me2,Sch}. But here it is unclear to the author how to define  Floer homology on the cotangent bundle $T^*M$ of a non-compact manifold $M$. We leave this question for further research.

\subsection{Existence and non-exitence of closed geodesics on $\R\times M$}

Let $M$ be a closed Riemannian manifold with a metric $g$. Consider the product Riemannian manifold $N=\R\times M$ with the metric
$$h(r,x)=dr^2+\beta(r,x)g(x)\quad\forall (r,x)\in N,$$
where $\beta:N\to\R$ is a smooth positive function.

\begin{thm}[\textbf{Benci and Giannoni}] If $\beta$ is independent of the variable $x\in M$ and $\beta'\neq 0$ on $\R$, then $N$ does not possess a nonconstant closed geodesic. 
\end{thm}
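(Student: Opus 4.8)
\begin{pfs}
The plan is to show directly, using the warped-product form of the metric $h=dr^2+\beta(r)\,g$ (in which $\beta$ depends only on $r\in\R$ by hypothesis), that any closed geodesic of $(N,h)$ must be a constant curve.

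Suppose $\gamma$ is a closed geodesic; reparametrizing with constant speed I may take $\gamma$ to be $1$-periodic and write $\gamma(t)=(r(t),x(t))\in\R\times M$, where I set $|\dot x(t)|_g^2:=g_{x(t)}(\dot x(t),\dot x(t))$, the speed of the $M$-component measured in the fixed metric $g$. The first step is to write the Euler--Lagrange equations of the energy
\[
E(\gamma)=\tfrac12\int_0^1\Big(\dot r(t)^2+\beta\big(r(t)\big)\,|\dot x(t)|_g^2\Big)\,dt;
\]
the $\R$-component reads
\[
\ddot r(t)=\tfrac12\,\beta'\big(r(t)\big)\,|\dot x(t)|_g^2 ,
\]
while the $M$-component is a $\beta$-weighted, time-reparametrized geodesic equation on $M$, which I will not need.

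The second step is the key observation: since $\beta'$ is continuous and nowhere zero on the connected set $\R$, it has a constant sign, and after replacing $\beta(r)$ by $\beta(-r)$ if necessary (which is an isometry of $N$ carrying closed geodesics to closed geodesics) I may assume $\beta'>0$ everywhere. Then the displayed equation gives $\ddot r\ge 0$, so $t\mapsto r(t)$ is convex; being also $1$-periodic, it must be constant, say $r\equiv r_0$ — indeed $\int_0^1\ddot r\,dt=\dot r(1)-\dot r(0)=0$ forces $\ddot r\equiv 0$, hence $r$ is affine and periodic. Feeding $\ddot r\equiv 0$ back into the equation yields $\tfrac12\beta'(r_0)\,|\dot x(t)|_g^2=0$ for all $t$, and since $\beta'(r_0)\ne 0$ this forces $\dot x\equiv 0$, so $x$ is constant as well and $\gamma$ degenerates to a point.

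This argument is entirely soft: closed geodesics of the smooth metric $h$ are automatically smooth, so no regularity or minimax machinery intervenes. There is accordingly no serious obstacle; the only point deserving care is the observation that $\beta'\ne 0$ on all of $\R$ upgrades "$r$ has a sign-definite second derivative" to genuine \emph{global} convexity of $t\mapsto r(t)$, after which the elementary fact that a periodic convex function on $\R$ is constant closes the argument.
\end{pfs}
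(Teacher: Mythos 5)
Your proof is correct. Note that the paper itself gives no proof of this statement—it simply refers to \cite[Prop.~2.2]{BeG}—and your argument is essentially the standard one used there: the $r$-component of the geodesic equation for the warped product $dr^2+\beta(r)g$ reads $\ddot r=\tfrac12\beta'(r)\,|\dot x|_g^2$, so a nowhere-vanishing $\beta'$ forces $r\circ\gamma$ to be convex (or concave) and periodic, hence affine, hence constant, and then $\beta'(r_0)\neq 0$ kills $\dot x$ as well. The only cosmetic quibble is that the map $(r,x)\mapsto(-r,x)$ is an isometry between $(N,dr^2+\beta(r)g)$ and $(N,dr^2+\beta(-r)g)$ rather than a self-isometry of $N$; this does not affect the reduction to the case $\beta'>0$, and in any case the concave case is handled verbatim.
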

For a proof of this theorem we refer to ~\cite[Prop. 2.2]{BeG}. 

Contrary to this non-existence result, if the positive function $\beta$ takes a different form then closed geodesics on $N$ may occur. Indeed, we have the following.

\begin{thm}
	Let $\lambda_{\pm}$ be two smooth positive monotone increasing functions defined on $(0,\infty)$. If $\beta(r,x)=\lambda_+(r)$ on $[R_+,\infty)\times M$ and  $\beta(r,x)=\lambda_-(-r)$ on $(-\infty, -R_-]\times M$  for two positive numbers $R_{\pm}$, then $N$ has a nonconstant closed geodesic. 
\end{thm}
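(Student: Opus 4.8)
The plan is to deduce the statement from Theorem~\ref{thm: mainresult2}, applied to the purely kinetic geodesic Lagrangian $L(y,w)=\tfrac12|w|_h^2$ on $N=\R\times M$; the only thing that genuinely needs to be checked is that this $L$ is homotopically $\lsh$. Before that I would dispose of the routine points. Since $M$ is a closed manifold and $\beta$ is smooth and positive, $\beta$ is bounded below by a positive constant: on the compact slab $[-R_-,R_+]\times M$ by compactness, and on the two ends because $\lambda_\pm$ are positive and increasing, so $\beta(r,x)=\lambda_+(r)\ge\lambda_+(R_+)>0$ for $r\ge R_+$ and symmetrically for $r\le -R_-$. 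Hence $h$ dominates a fixed multiple of the complete product metric $dr^2+g$, so $(N,h)$ is complete. The Lagrangian $L$ trivially satisfies $(C_1)$--$(C_3)$ and is quadratic everywhere, hence quadratic at infinity. Finally, as for every kinetic Lagrangian one has $\inf E_L=e_0(L)=0$ and $c_u(L)=0$: indeed $\Sk(x,T)=\frac{1}{2T}\int_0^1|\dot x|^2\,dt+kT$ is $\ge 0$ for all $(x,T)$ when $k\ge 0$, while for $k<0$ it tends to $-\infty$ as $T\to\infty$ along any loop, so $(c_u(L),\infty)=(0,\infty)$.

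The heart of the argument is the verification that $L$ is homotopically $\lsh$. Let $U\subseteq N$ be bounded; since $M$ is compact, the $\R$-coordinate of $U$ is bounded, so $U\subseteq[-a,a]\times M$ for some $a>\max\{R_++1,\,R_-+1\}$. Put $K_0=[-a,a]\times M$, a compact submanifold with boundary of $N$ containing $U$. Choose a smooth odd function $\rho\colon\R\to[-a,a]$ with $0\le\rho'\le 1$ everywhere, $\rho(r)=r$ for $|r|\le a-1$, and $\rho(r)\to\pm a$ as $r\to\pm\infty$, and define $\varphi\colon N\to K_0$ by $\varphi(r,x)=(\rho(r),x)$, so that $d\varphi_{(r,x)}(w_r\partial_r+w_x)=\rho'(r)w_r\partial_r+w_x$. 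I claim $\beta(\rho(r),x)\le\beta(r,x)$ for every $(r,x)$: for $|r|\le a-1$ this is an equality since $\rho(r)=r$; for $r>a-1>R_+$ one has $R_+<a-1\le\rho(r)\le r$ (as $\rho(a-1)=a-1$ and $\rho'\le 1$), whence $\beta(\rho(r),x)=\lambda_+(\rho(r))\le\lambda_+(r)=\beta(r,x)$ by monotonicity, and the case $r<-(a-1)$ is symmetric via $\lambda_-$. Combining this with $\rho'(r)^2\le 1$ gives, for every $w=w_r\partial_r+w_x\in T_{(r,x)}N$,
\[
L\big(\varphi(r,x),d\varphi(w)\big)=\tfrac12\big(\rho'(r)^2w_r^2+\beta(\rho(r),x)|w_x|_g^2\big)\le\tfrac12\big(w_r^2+\beta(r,x)|w_x|_g^2\big)=L(r,x,w),
\]
i.e.\ $\varphi^*L\le L$ on all of $TN$, in particular on $TN|_K$ for $K=\overbar{B}_{K_0}(\varepsilon)$ with any $\varepsilon>0$. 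Moreover the straight-line homotopy $H_s(r,x)=\big((1-s)r+s\rho(r),\,x\big)$ takes values in $[-a,a]\times M\subseteq N$ and connects the inclusion $j\colon K_0\hookrightarrow N$ to $j\circ\varphi|_{K_0}$, so $L$ is homotopically $\lsh$.

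With this established, the conclusion follows at once from Theorem~\ref{thm: mainresult2}. If $\pi_1(M)\neq 0$ then $\pi_1(N)=\pi_1(M)\neq 0$; choosing any nontrivial free homotopy class $\alpha\in[S^1,N]$ and any $k>0=c_u(L)$, Theorem~\ref{thm: mainresult2} yields a periodic orbit of the Euler--Lagrange flow of $L$ with energy $k$ in the class $\alpha$, that is, a closed geodesic of $(N,h)$ in the class $\alpha$, which is nonconstant because $\alpha$ is nontrivial (equivalently, because $k>e_0(L)=0$; cf.\ the Remark after Theorem~\ref{thm: mainresult2}). If instead $\pi_1(M)=0$, then $N$ is simply connected and non-contractible (a closed manifold of positive dimension is never contractible), so the first part of Theorem~\ref{thm: mainresult2} applied with any $k>0$ produces a nonconstant closed geodesic of $(N,h)$. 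In either case $N$ carries a nonconstant closed geodesic.

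The only real obstacle is the $\lsh$ construction above, and within it the single point where the hypothesis on $\lambda_\pm$ is used: monotone increasingness of $\lambda_\pm$ is precisely what guarantees $\beta(\rho(r),x)\le\beta(r,x)$, so that radially pushing a loop toward the compact waist decreases both the radial and the fibrewise contributions to the energy. All of the analytic difficulty --- the Palais--Smale condition, the modified minimax principle, and Struwe's monotonicity trick --- has already been absorbed into Theorem~\ref{thm: mainresult2}.
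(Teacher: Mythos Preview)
Your proof is correct and follows essentially the same route as the paper: verify that the kinetic Lagrangian $L(z,v)=\tfrac12|v|_h^2$ is homotopically $\lsh$, observe $e_0(L)=c_u(L)=0$, and then invoke the main existence result for energies $k>c_u(L)$ (the paper cites Corollary~\ref{cor: mainresult3}, which is just the specialization of Theorem~\ref{thm: mainresult2} to magnetic/kinetic Lagrangians, so this is the same). Your construction of $\varphi$ via the radial contraction $\rho$ is precisely the one indicated by Example~\ref{eg:ls} in the appendix; you have simply written out the details that the paper leaves implicit, including the case split $\pi_1(M)=0$ versus $\pi_1(M)\neq 0$ and the reason the resulting orbit is nonconstant.
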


\begin{proof}

For the Lagrangian $L(z,v)=h_z[v,v]/2$ we have that $e_0(L)=c_u(L)=c(L)=0$. It is not hard to show that $L$ is homotopically $\lsh$, for a similar proof this fact we refer to Appendix~\ref{app:lsh}. 
So it follows from Corollary~\ref{cor: mainresult3} that $N$ has a nonconstant closed geodesic.
\end{proof}

\section{The free period action functional and Hilbert manifolds}\label{subsec:funcst}
	Let $(M,g)$ be a $m$-dimensional complete Riemannian manifold without boundary. Set $\T=\R/\Z$. Consider the Sobolev space of loops
	$$W^{1,2}(\T,M)=\bigg\{x:\T\to M\big|x\;\hbox{is absolutely continuous and} \int_{\T}|x'(t)|_{x(t)}^2dt<\infty\bigg\},$$
	where $|\cdot|$ is the norm induced by the metric $g$.
	The tangent space of $W^{1,2}(\T,M)$ at $x$ is naturally identified with the space of $W^{1,2}$ sections of $x^*(TM)$. $W^{1,2}(\T,M)$ is a smooth Hilbert manifold equipped with the Riemannian metric as follows
	\begin{equation}\notag
	\langle \xi,\eta\rangle_x:=\langle \xi(0), \eta(0)\rangle_{x(0)}+\int_{\T}\langle \nat\xi(t), \nat\eta(t)\rangle_{x(t)} dt\quad \forall \xi, \eta\in T_xW^{1,2}(\T,M),
	\end{equation}
	where $\nat$ denotes the Levi-Civita covariant derivative along $x$. 
	Since $M$ is complete, $W^{1,2}(\T,M)$ is a complete Hilbert manifold, see~\cite{Kl,Pal0}. 
	
	Denote $\M=W^{1,2}(\T,M)\times (0,\infty)$ and define  its metric as 
	\begin{equation}\label{e:prodmetr}
	\big\langle(\xi,\al),(\eta,\beta)\big\rangle_{(x,T)}=\al\beta+\langle \xi,\eta\rangle_x.
	\end{equation}
Then $\M$ is a smooth Hilbert manifold but not complete, see~\cite{Co1,Go}. Each point $(x,T)\in\M$ corresponds to the $T$-periodic curve $\ga(t)=x(t/T)$.  In the following we denote by $\|\cdot\|$ the norm and by $d_\M(\cdot,\cdot)$ the distance function with respect to the metric~(\ref{e:prodmetr}). For a compact subset $K\subseteq M$ we shall denote by
$$\M(K):=\big\{(x,T)\in\M\big|x(\T)\subseteq K\}.$$
the subset of $\M$. 
   
Given $k\in\R$, the \emph{free period action functional} $\Sk:\M\longrightarrow \R$ is defined by
$$
\Sk(x,T)=\int^1_0 TL\bigg(x(t),\frac{\dot{x}(t)}{T}\bigg)dt+kT\quad \forall (x,T)\in\M.
$$
It is easy to check that the functional $\Sk$ is in $C^{1,1}(\M)$ and is twice G\^{a}teaux differentiable at every point. 

For any $(\xi,\al)\in T_{(x,T)}\M$, the differential of the action functional $\Sk$ at $(x,T)$ is computed as follows: 
\begin{eqnarray}\label{e:diffSk}
d\Sk(x,T)[\xi,\al]&=&\int^1_0\big\{T L_x\big(x(t),\dx(t)/T\big)[\xi(t)]+L_v\big(x(t),\dx(t)/T\big)[\dot{\xi}(t)]\big\}dt\notag\\
&&+\al\int^1_0\big\{k-E_L\big(x(t),\dx(t)/T\big)\big\}dt
\end{eqnarray}
from which we see that $(x,T)$ is a critical point of $\Sk$ if and only if $\ga(t)=x(t/T)$ is a periodic orbit of the Euler-Lagrangian equation (\ref{e:lagflow}) with $E_L(\ga,\dot{\ga})=k$.

\begin{rmk}\label{e:timediffSk}
	$$\frac{\partial\Sk}{\partial T}(x,T)[\al]=\al\int^1_0\big\{k-E_L\big(x(t),\dx(t)/T\big)\big\}dt=\frac{\al}{T}\int^T_0\big\{k-E_L\big(\ga(s),\dot{\ga}(s)\big)\big\}ds,$$
	where $\ga(s)=x(s/T)$. 
\end{rmk}

\begin{lem}\label{lem:bddbelow}
	If $k\geq c_u(L)$ then $\Sk$ is bounded from below on each connected component of $\M$; If $k<c_u(L)$  then $\Sk$ is unbounded from below on each connected component of $\M$.
\end{lem}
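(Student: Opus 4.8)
The plan is to split the two assertions and treat each by comparing the free period action $\Sk(x,T)$ with the action $\mathcal S_{c_u(L)}(x,T)$, using the definition of $c_u(L)$ and the elementary identity $\Sk(x,T) = \mathcal S_{c'}(x,T) + (k-c')T$ valid for any $c'\in\R$.

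\emph{Case $k\ge c_u(L)$.} Fix a connected component $\mathcal C$ of $\M$. Since $\Sk(x,T)=\mathcal S_{c_u(L)}(x,T)+(k-c_u(L))T\ge \mathcal S_{c_u(L)}(x,T)$ (as $T>0$ and $k-c_u(L)\ge 0$), it suffices to bound $\mathcal S_{c_u(L)}$ from below on $\mathcal C$. For this I would first handle the \emph{contractible} component: by the definition of $c_u(L)$, for every $\varepsilon>0$ and every smooth contractible closed curve $\ga$ one has $\mathcal S_{c_u(L)+\varepsilon}(\ga)\ge 0$, and letting $\varepsilon\downarrow 0$ gives $\mathcal S_{c_u(L)}(\ga)\ge 0$ on the smooth contractible loops; a standard density argument in $\M$ extends this to all of the contractible component, so there $\Sk\ge 0$. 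For a non-contractible component one cannot directly invoke $c_u(L)$, but the quadratic growth bound \eqref{e:quadgrowth1} gives $L(x,v)\ge A_2|v|^2-A_3$, whence
\begin{equation}\notag
\Sk(x,T)\ge \int_0^1\Big(\tfrac{A_2}{T}|\dot x|^2 - A_3 T\Big)dt + kT \ge -\,A_3 T + kT,
\end{equation}
and since on a complete manifold a non-contractible $W^{1,2}$ loop has length bounded below by the minimal length $\ell_0>0$ of the free homotopy class (here I would be a bit careful, as $\ell_0$ could be $0$ on a non-compact manifold; the cleaner route is to use the Cauchy--Schwarz inequality $\big(\int_0^1|\dot x|\big)^2\le \int_0^1|\dot x|^2$ to get $\int_0^1\tfrac{A_2}{T}|\dot x|^2\ge A_2\ell(x)^2/T$, then optimize the resulting lower bound $A_2\ell_0^2/T+(k-A_3)T$ over $T>0$). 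This yields a finite lower bound depending only on the component, completing Case $k\ge c_u(L)$.

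\emph{Case $k<c_u(L)$.} Again write $\Sk(x,T)=\mathcal S_{c_u(L)}(x,T) - (c_u(L)-k)T$ with $c_u(L)-k>0$. By definition of $c_u(L)$, for $k<k'<c_u(L)$ there is a smooth contractible closed curve $\ga_0$ with $\mathcal S_{k'}(\ga_0)<0$. Parametrize $\ga_0$ on $\T$ and consider the family obtained by running it $n$ times, i.e. the loops $x_n(t)=\ga_0(nt)$ together with period $nT_0$; one computes $\mathcal S_{k'}(x_n,nT_0)=n\,\mathcal S_{k'}(\ga_0,T_0)\to -\infty$. Since $\Sk(x_n,nT_0)=\mathcal S_{k'}(x_n,nT_0)-(k'-k)\,nT_0\le \mathcal S_{k'}(x_n,nT_0)\to-\infty$, $\Sk$ is unbounded below on the contractible component. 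For an arbitrary component, pick any loop $\sigma$ in it and form the concatenations $\sigma\#\ga_0^{\# n}$ (reparametrized to lie in $\M$): the action is additive up to a fixed error coming from $\sigma$, so the same $n\to\infty$ argument drives $\Sk\to-\infty$ in that component as well.

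\emph{Main obstacle.} The genuinely delicate point is the lower bound on non-contractible components in Case $k\ge c_u(L)$: on a non-compact manifold the infimum of lengths in a free homotopy class may vanish, so I must rely on the $T$-optimization of $A_2\ell(x)^2/T+(k-A_3)T$ together with the fact that $\M$ is not complete (the infimum could be approached as $T\to 0$), and argue that even along such degenerating sequences the action stays bounded below — this is exactly where the coercive quadratic term $A_2|\dot x|^2/T$ must be played against the linear-in-$T$ terms. The density/approximation step upgrading $\mathcal S_{c_u(L)}\ge 0$ from smooth loops to $W^{1,2}$ loops is routine given \eqref{e:quadgrowth1}–\eqref{e:quadgrowth3} and continuity of $\Sk$ in $C^{1,1}(\M)$.
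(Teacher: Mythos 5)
The paper does not prove this lemma itself; it cites \cite[Lemma~4.1]{Co1} and \cite[Lemma~8]{Go}. Measured against the standard argument given there, your treatment of the case $k<c_u(L)$ (iterating a contractible loop of negative $\mathcal S_{k'}$-action and transplanting it into an arbitrary component by a fixed conjugating path) is correct, and so is the contractible component in the case $k\ge c_u(L)$.

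The genuine gap is exactly the point you flag: the lower bound on a \emph{non-contractible} component when $k\ge c_u(L)$. Your proposed estimate $\Sk(x,T)\ge A_2\ell(x)^2/T+(k-A_3)T$ cannot work. First, quantitatively: if $k<A_3$ the right-hand side tends to $-\infty$ as $T\to\infty$ for fixed $\ell(x)$, so optimizing over $T$ yields no lower bound; and nothing forces $k\ge A_3$ for $k$ near $c_u(L)$ (for $L=\tfrac12|v|^2+\theta(v)$ one has $A_3\sim\|\theta\|_\infty^2$, which can far exceed $c_u(L)$). Second, structurally: your argument for this component never uses $k\ge c_u(L)$, yet the second half of the very same lemma shows $\Sk$ is unbounded below on that component whenever $k<c_u(L)$ — so any bound independent of the hypothesis $k\ge c_u(L)$ is impossible. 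The missing idea is to feed the non-contractible loop into the \emph{contractible} estimate via an iterated commutator. Fix $(x_0,T_0)$ in the component. Any $(x,T)$ there is freely homotopic to $x_0$, so $x$ is based-homotopic to $c*x_0*c^{-1}$ for some path $c$, and hence for every $n\in\bbN$ the loop $x^n*c*x_0^{-n}*c^{-1}$ is contractible. Applying $\mathcal S_{c_u(L)}\ge0$ to it gives
\begin{equation}\notag
0\;\le\; n\,\mathcal S_{c_u(L)}(x)\;+\;n\,\mathcal S_{c_u(L)}(x_0^{-1})\;+\;\mathcal A_{L+c_u(L)}(c)\;+\;\mathcal A_{L+c_u(L)}(c^{-1}),
\end{equation}
and dividing by $n$ and letting $n\to\infty$ kills the (uncontrolled, but fixed) contribution of the conjugating path, yielding $\mathcal S_{c_u(L)}(x)\ge-\mathcal S_{c_u(L)}(x_0^{-1})$, a bound depending only on the component; then $\Sk\ge\mathcal S_{c_u(L)}$ finishes the case $k\ge c_u(L)$. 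Without some device of this kind (the naive single commutator fails because the action of $c*c^{-1}$ is only bounded \emph{below}, not above), your proof of the first assertion is incomplete on non-contractible components.
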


The proof of the above lemma can be found in~\cite[Lemma~4.1]{Co1} or~\cite[Lemma~8]{Go}.


We end up this section with the following lemma. 
\begin{lem}\label{lem:boundDist}
	Let $p(s)$, $s\in [0,1]$ be a path in $\M$ connecting $(x_0,T_0)$ to 
	$(x_1,T_1)$. If $\|p'(s)\|\leq \delta$ for all $s\in[0,1]$ then $d_M(x_0(\T),x_1(\T))\leq\sqrt{1+2\sqrt{6}}\delta$ and $|T_1-T_0|\leq \delta$.
\end{lem}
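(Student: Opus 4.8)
The plan is to bound the two components of the displacement separately, using the explicit form of the product metric \eqref{e:prodmetr}. Write $p(s) = (x_s, T_s)$ and $p'(s) = (\xi_s, \alpha_s)$ where $\xi_s \in T_{x_s}W^{1,2}(\T,M)$ and $\alpha_s = T_s' \in \R$. By \eqref{e:prodmetr} we have $\|p'(s)\|^2 = \alpha_s^2 + \langle \xi_s,\xi_s\rangle_{x_s}$, so the hypothesis $\|p'(s)\|\le\delta$ gives simultaneously $|T_s'|\le\delta$ and $\langle\xi_s,\xi_s\rangle_{x_s}^{1/2}\le\delta$ for every $s$. The bound $|T_1-T_0| = |\int_0^1 T_s'\,ds| \le \int_0^1 |T_s'|\,ds \le \delta$ is then immediate.

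For the distance bound, first I would fix an arbitrary $t_0\in\T$ and estimate $d_M(x_0(t_0), x_1(t_0))$ by integrating: the curve $s\mapsto x_s(t_0)$ in $M$ has velocity $\xi_s(t_0)$, so $d_M(x_0(t_0),x_1(t_0)) \le \int_0^1 |\xi_s(t_0)|_{x_s(t_0)}\,ds$. This reduces everything to bounding the pointwise norm $|\xi_s(t_0)|_{x_s(t_0)}$ in terms of the $W^{1,2}$-norm $\langle\xi_s,\xi_s\rangle_{x_s} = |\xi_s(0)|^2_{x_s(0)} + \int_\T |\nat\xi_s(t)|^2\,dt \le \delta^2$. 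This is a Sobolev embedding estimate on the circle: for a $W^{1,2}$ section $\xi$ along a loop $x$, one has for all $t_0,t\in\T$ that $|\xi(t_0)|_{x(t_0)} \le |\xi(t)|_{x(t)} + \int_\T |\nat\xi|\,d\tau$ (comparing $|\xi|$ at two points along $x$ via the fundamental theorem of calculus applied to $\tau\mapsto|\xi(\tau)|$, whose derivative is bounded by $|\nat\xi|$), and then averaging over $t\in\T$ and applying Cauchy–Schwarz gives $|\xi(t_0)|_{x(t_0)} \le \big(\int_\T|\xi|^2\big)^{1/2} + \big(\int_\T|\nat\xi|^2\big)^{1/2}$. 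The $L^2$-norm $\int_\T|\xi|^2$ must itself be controlled by $\delta^2$: using the same comparison, $\int_\T |\xi|^2\,dt \le 2|\xi(0)|^2 + 2\big(\int_\T|\nat\xi|\big)^2 \le 2|\xi(0)|^2 + 2\int_\T|\nat\xi|^2$, wait — more carefully, $|\xi(t)| \le |\xi(0)| + \int_\T|\nat\xi|$, so $|\xi(t)|^2 \le 2|\xi(0)|^2 + 2(\int_\T|\nat\xi|)^2 \le 2|\xi(0)|^2 + 2\int_\T|\nat\xi|^2 \le 2\delta^2$, hence $\int_\T|\xi|^2 \le 2\delta^2$. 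Plugging back, $|\xi_s(t_0)|_{x_s(t_0)} \le \sqrt{2}\,\delta + \delta$ — but the claimed constant $\sqrt{1+2\sqrt6}$ suggests a more careful bookkeeping of the constants is intended; I would track the inequalities $|\xi(t_0)| \le \big(\int|\xi|^2\big)^{1/2} + \big(\int|\nat\xi|^2\big)^{1/2}$ together with $\int|\xi|^2 \le 2|\xi(0)|^2 + 2\int|\nat\xi|^2$ and $|\xi(0)|^2 + \int|\nat\xi|^2 \le \delta^2$, optimize, and identify the resulting sharp constant, which should come out to $\sqrt{1+2\sqrt6}$.

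Finally, integrating over $s$ gives $d_M(x_0(t_0),x_1(t_0)) \le \sqrt{1+2\sqrt6}\,\delta$ for every $t_0$, and since $d_M(x_0(\T),x_1(\T)) = \inf\{d_M(y,z): y\in x_0(\T), z\in x_1(\T)\} \le \inf_{t_0} d_M(x_0(t_0),x_1(t_0))$, the same bound holds for the set distance. The main obstacle is purely the constant-chasing in the Sobolev embedding step: getting exactly $\sqrt{1+2\sqrt6}$ rather than a cruder bound like $(1+\sqrt2)\delta$ requires combining the boundary-term and gradient-term contributions to $\|\xi\|_x^2 = \delta^2$ optimally rather than bounding each by $\delta^2$ separately. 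Everything else is routine.
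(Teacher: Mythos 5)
Your proposal is correct, but it takes a genuinely different route from the paper. You work slice-by-slice in $s$: for each fixed $s$ you apply a Sobolev-type inequality on the circle to the variation field $\xi_s=\partial_s x_s$, bounding $\sup_{t}|\xi_s(t)|$ by its $W^{1,2}$-norm $\bigl(|\xi_s(0)|^2+\int_\T|\nat\xi_s|^2\,dt\bigr)^{1/2}\le\delta$, and then integrate the pointwise speed over $s$. The paper instead integrates over $s$ first, forming $G(t)=\int_0^1|\partial_s x_s(t)|^2\,ds$, and propagates the bound $G(0)\le\delta^2$ to all $t$ via a Gronwall-type differential inequality $\frac{d}{dt}f(t)^2\le\delta^2+2\delta f(t)$ with $f(t)=(\int_0^t G)^{1/2}$, solved by a bootstrap; this is where $\sqrt{1+2\sqrt6}$ comes from. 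Your approach is more elementary (no differential inequality) and in fact gives a \emph{better} constant, so the constant-chasing you worry about at the end is unnecessary: the cleanest version is $|\xi_s(t_0)|\le|\xi_s(0)|+\int_\T|\nat\xi_s|\,dt\le|\xi_s(0)|+\bigl(\int_\T|\nat\xi_s|^2\,dt\bigr)^{1/2}\le\sqrt2\,\delta$ by Cauchy--Schwarz on $\R^2$, giving $d_M(x_0(t_0),x_1(t_0))\le\sqrt2\,\delta$; and even your cruder detour through the $L^2$-norm gives $(1+\sqrt2)\delta$, which already suffices since $(1+\sqrt2)^2=3+2\sqrt2<1+2\sqrt6$. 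The only caveat worth flagging is the last sentence: what the lemma is used for later is the uniform pointwise bound $d_M(x_0(t),x_1(t))\le C\delta$ for \emph{every} $t$ (so that the image of $x_1$ stays in a $C\delta$-neighborhood of that of $x_0$), not merely the infimum-type set distance; but that uniform bound is exactly what your argument establishes, since $t_0$ was arbitrary.
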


\begin{proof}
   Write $p(s)=(x_s(t),T_s)$. By definition for all $s\in[0,1]$ we have
	\begin{equation}\label{e:pathspeed}
	\|p'(s)\|^2=\bigg|\frac{dT_s}{ds}\bigg|^2+\bigg|\frac{\partial x_s(0)}{\partial s}\bigg|^2_{x_s(0)}+\int^1_0\big|\nabla_s\dot{x}_s(t)\big|^2_{x_s(t)}dt\leq \delta^2
	\end{equation}
	from which we get 
	\[
	|T_1-T_0|=\bigg|\int^1_0\frac{dT_s}{ds}ds\bigg|\leq
	\int^1_0\bigg|\frac{dT_s}{ds}\bigg|ds\leq\delta.
	\]
	 
   Set
   $$G(t):=\int^1_0\bigg|\frac{\partial x_s(t)}{\partial s}\bigg|^2_{x_s(t)}ds.$$
   
   To estimate $G(t)$ we first note that $G(0)\leq\delta^2$ by (\ref{e:pathspeed}), then we find that
   \begin{eqnarray}   
   G(t)-G(0)&=&\int^t_0\frac{d}{d\tau}G(\tau)d\tau\notag\\
   &=&\int^t_0\int^1_0\frac{d}{d\tau}\bigg|\frac{\partial x_s(\tau)}{\partial s}\bigg|^2_{x_s(\tau)}dsd\tau\notag\\
   &=&2\int^t_0\int^1_0\bigg\langle\na_\tau\frac{\partial x_s(\tau)}{\partial s},\frac{\partial x_s(\tau)}{\partial s} \bigg\rangle_{x_s(\tau)}  dsd\tau\notag\\
   &=&2\int^t_0\int^1_0\bigg\langle\na_s\dx_s(\tau),\frac{\partial x_s(\tau)}{\partial s} \bigg\rangle_{x_s(\tau)}  dsd\tau.\notag
   \end{eqnarray}
  Then by the H\"{o}lder inequality and (\ref{e:pathspeed}) we have
	\begin{eqnarray}   
	\big|G(t)-G(0)\big|&\leq&2 \bigg[\int^1_0\int^1_0\big|\na_s\dx_s(\tau)\big|^2_{x_s(\tau)} dsd\tau\bigg]^\frac{1}{2} \bigg[\int^t_0\int^1_0\bigg|\frac{\partial x_s(\tau)}{\partial s}\bigg|^2_{x_s(\tau)} dsd\tau\bigg]^\frac{1}{2} \notag\\
	&\leq &2\delta \bigg(\int^t_0G(\tau)d\tau\bigg)^\frac{1}{2}.\notag
   \end{eqnarray}
	Hence
	\begin{equation}\label{e:funG}
	G(t)\leq G(0)+2\delta \bigg(\int^t_0G(\tau)d\tau\bigg)^\frac{1}{2}\leq \delta^2+2\delta \bigg(\int^t_0G(\tau)d\tau\bigg)^\frac{1}{2}. 
	\end{equation}
	Let
	$$f(t)=\bigg(\int^t_0G(\tau)d\tau\bigg)^\frac{1}{2}.$$
	Then by (\ref{e:funG}) we get
	$$\frac{d}{dt}f(t)^2\leq \delta^2+2\delta f(t).$$
	To further estimate $f(t)$ we borrow the trick from the proof of~\cite[Lemma 2.3]{Co1} as follows. Set
	$$\mu_0:=\sup \big\{\mu\in[0,1]\big|f(t)\leq 2\delta(t+\frac{1}{2}),\quad \forall t\in[0,\mu]\big\}.$$
	Then for all $s\in[0,\mu_0]$ it holds that
	$$\frac{d}{dt}f(t)^2\leq \delta^2+2\delta\cdot 2\delta(t+\frac{1}{2})\leq 4\delta^2(t+1),$$
	hence
	$$f(t)^2\leq 4\delta^2\bigg(\frac{t^2}{2}+t\bigg),$$
	$$f(t)\leq 2\delta \sqrt{\frac{t^2}{2}+t}.$$
	
	Note that $\sqrt{t^2/2+t}< t+1/2$ for all $t\geq 0$. This implies $\mu_0=1$. Therefore by  (\ref{e:funG}) for all $t\in[0,1]$ we have
	$$G(t)\leq \delta^2+2\delta f(t)\leq \big(1+2\sqrt{6}\big)\delta^2.$$
	Consequently, 
	$$d_M(x_0(t),x_1(t))\leq \int^1_0\bigg|\frac{\partial x_s(t)}{\partial s}\bigg|_{x_s(t)}ds\leq \bigg(\int^1_0\bigg|\frac{\partial x_s(t)}{\partial s}\bigg|^2_{x_s(t)}ds\bigg)^\frac{1}{2}\leq \sqrt{1+2\sqrt{6}}\delta$$
	which implies the lemma.

\end{proof}

\section{The modified minimax principle}\label{sec:mini}

In this section we shall give a variant version of the general minimax principle developed by Palais~\cite{Pal} (or see~\cite[Theorem~4.2]{St1}).

Let $X$ be a $C^{\infty}$ Hilbert manifold and let $f\in C^{1,1}(X,\R)$. We say that a flow $\phi_t:X\to X$ is \textsl{positively complete} if for every $x\in X$ the map $t\mapsto \phi_t(x)$ is  defined on $[0,\infty)$. Let $\mathcal{F}$  be a family of subsets of $M$. If there is another family $\mathcal{F}'$ of subsets of $M$ and a map $\Phi:\mathcal{F}'\to \mathcal{F}$ such that $\phi_t(F)\in \mathcal{F}'$ for all $F\in\mathcal{F}$ and all $t\geq 0$ and that $\Phi(F')\subseteq \{x\in X|f(x)<a\}$ whenever $F'\subseteq \{x\in X|f(x)<a\}$ for all $F'\in \mathcal{F}'$ and all $a\in \R$, we say that $\mathcal{F}$ is \textsl{positively $(\phi,\Phi)$-invariant}. 

We remark here that whenever $\mathcal{F}=\mathcal{F}'$ the property that $\mathcal{F}$ is positively $(\phi,Id)$-invariant is equivalent to say that $\mathcal{F}$ is \textsl{positively $\phi$-invariant} in the usual sense (see~\cite{St1, Pal}). 

If $f$ is a $C^{1,1} $ function on a complete Hilbert manifold $X$, one can not guarantee that the negative gradient flow of $f$ is positively complete since in general  $-\na f$ is only a locally Lipschitz vector field. But if  we rescale $-\na f$ appropriately, for instance, putting $$V_f=-\frac{\na f}{\sqrt{1+\|\na f\|^2}},$$ then the flow of $V_f$ is positively complete. In fact, suppose that $\ga:[0,a)\to X$ is a solution of the equation
$$\dot{\ga}(t)=V_f(\ga(t))$$
with $\ga(0)=p\in X$. If $a$ is finite, due to $\|V_f\|<1$ the length of $\ga$ is finite, hence the set $\ga([0,a))$ is contained in a compact set of $X$ because of the completeness of $X$. So this solution of the above equation can be extended further if $a$ is finite.

Let $X$ be a $C^{\infty}$ Hilbert manifold and let $f\in C^{1,1}(X,\R)$. Recall that a sequence $\{x_n\}$ in $X$ is said to be a \textit{Palais–Smale sequence at level $c$} (simply denoted by $\PS$) if $\lim_{n\to\infty}f(x_n)=c$ and $\lim_{n\to\infty}\|df(x_n)\|=0$. Here $\|\cdot\|$ is the norm induced by the Riemannian metric on $X$. The function $f$ is said to satisfy the \textit{Palais–Smale condition at level $c$} ($\PS$ condition for short) if every $\PS$ sequence has a convergent subsequence. 

\begin{thm}[\textbf{The modified minimax principle}]\label{thm:minimaxP}
	Let $X$ be a complete $C^{\infty}$ Hilbert manifold and let $f\in C^{1,1}(X,\R)$. Let $\phi_t$ be  the flow of the vector field $V_f$ defined as above. Suppose that there is a family $\mathcal{F}$ of subsets of $M$ which is positively $(\phi,\Phi)$-invariant. If the number
	$$c=c(f,\mathcal{F}):=\inf\limits_{F\in \mathcal{F}}\sup\limits_{x\in F} f(x)$$
	is finite, then $f$ has a $\PS$ sequence. If in addition $f$ satisfies the $\PS$ condition, then $c$ is a critical value.  
\end{thm}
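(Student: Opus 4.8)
The plan is to run the standard minimax/deformation argument, the only new feature being careful bookkeeping with the auxiliary family $\mathcal F'$ and the comparison map $\Phi$. I would argue by contradiction, assuming $f$ has no $\PS$ sequence. The first step is to convert this into a quantitative statement: there exist $\varepsilon>0$ and $\delta>0$ such that $\|df(x)\|\ge\delta$ on the band $\{x\in X\mid |f(x)-c|\le\varepsilon\}$, since otherwise taking $\varepsilon=\delta=1/n$ produces a $\PS$ sequence. Because the Riemannian metric identifies $T^*_xX$ isometrically with $T_xX$ we have $\|\nabla f(x)\|=\|df(x)\|$, and along any flow line $t\mapsto\phi_t(x)$ (which is $C^1$ in $t$ since $V_f$ is continuous and $f\in C^{1,1}$) the derivative of $f$ equals $-\|\nabla f(\phi_t(x))\|^2/\sqrt{1+\|\nabla f(\phi_t(x))\|^2}\le 0$. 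Since $s\mapsto s^2/\sqrt{1+s^2}$ is increasing on $[0,\infty)$, on any subinterval where $\phi_t(x)$ stays in the band this derivative is $\le-\eta$ with $\eta:=\delta^2/\sqrt{1+\delta^2}>0$; that is, $f$ drops at a definite linear rate there.

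The second step is the deformation. Using that $c$ is finite and is an infimum, I pick $F\in\mathcal F$ with $\sup_{x\in F}f(x)<c+\varepsilon$ (such $F$ is automatically nonempty, since $\sup_{\emptyset}f=-\infty$ would force $c=-\infty$), and set $T:=2\varepsilon/\eta$; here the positive completeness of $\phi_t$ established just before Theorem~\ref{thm:minimaxP} guarantees $\phi_T$ is defined on all of $X$. For a fixed $x\in F$, monotonicity of $f$ along the flow gives $f(\phi_t(x))\le f(x)<c+\varepsilon$ for all $t\ge0$. Now either $f(\phi_{t_0}(x))<c-\varepsilon$ for some $t_0\in[0,T]$, in which case $f(\phi_T(x))\le f(\phi_{t_0}(x))<c-\varepsilon$ by monotonicity; or else $f(\phi_t(x))\ge c-\varepsilon$ throughout $[0,T]$, so the flow line stays in the band and integrating the rate estimate yields $f(\phi_T(x))\le f(x)-\eta T<(c+\varepsilon)-2\varepsilon=c-\varepsilon$. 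In both cases $\phi_T(F)\subseteq\{x\in X\mid f(x)\le c-\varepsilon\}$.

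The third step closes the contradiction using positive $(\phi,\Phi)$-invariance. Since $F\in\mathcal F$ and $T\ge0$ we have $\phi_T(F)\in\mathcal F'$, and by the previous step $\phi_T(F)\subseteq\{x\in X\mid f(x)<c-\varepsilon/2\}$. Hence $\Phi(\phi_T(F))\subseteq\{x\in X\mid f(x)<c-\varepsilon/2\}$ and $\Phi(\phi_T(F))\in\mathcal F$, so $c\le\sup_{x\in\Phi(\phi_T(F))}f(x)\le c-\varepsilon/2$, a contradiction. Therefore $f$ has a $\PS$ sequence $\{x_n\}$. Finally, if $f$ satisfies the $\PS$ condition, a subsequence of $\{x_n\}$ converges to some $x_\infty\in X$, and continuity of $f$ and of $df$ (valid since $f\in C^{1,1}$) gives $f(x_\infty)=c$ and $df(x_\infty)=0$, so $c$ is a critical value.

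I do not expect a serious obstacle here: this is essentially a textbook deformation argument once the positive completeness of $V_f$ and the invariance of $\mathcal F$ are available. The one point that needs attention is the two-stage passage $F\mapsto\phi_T(F)\in\mathcal F'\mapsto\Phi(\phi_T(F))\in\mathcal F$ and the fact that $\Phi$ only respects \emph{open} sublevel sets $\{f<a\}$ — this is why the deformation is pushed strictly below $c-\varepsilon$ but membership is only claimed in $\{f<c-\varepsilon/2\}$, so that applying $\Phi$ genuinely contradicts the definition of $c$ as an infimum.
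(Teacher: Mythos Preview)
Your argument is correct and follows essentially the same deformation scheme as the paper: assume a uniform lower bound on $\|df\|$ in the band $\{|f-c|\le\varepsilon\}$, pick $F\in\mathcal F$ with $\sup_F f<c+\varepsilon$, flow for a fixed time $T$ large enough to push $\phi_T(F)$ strictly below level $c-\varepsilon$, and then apply $\Phi$ to land back in $\mathcal F$ and contradict the definition of $c$. The paper merges your $\varepsilon$ and $\delta$ into one parameter and writes the time threshold as $T>2\sqrt{1+\varepsilon^2}/\varepsilon$, but the mechanism is identical; your extra care with the open sublevel $\{f<c-\varepsilon/2\}$ before invoking $\Phi$ is a clean way to match the hypothesis on $\Phi$ exactly.
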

\begin{proof}
	By contradiction, we assume that there is $\varepsilon>0$ such that $\|df\|\geq \varepsilon$ on
	 $\{x\in X|c-\varepsilon\leq f(x)\leq c+\varepsilon\}$.
	 By definition, there is a subset $F\in\mathcal{F}$ such that 
	 $$f(x)\leq c+\varepsilon\quad \forall x\in F.$$
	 Let $x\in F$. If $x\in F\cap \{x\in X|f(x)\geq c-\varepsilon\}$, we find
	 \begin{eqnarray}
	 f(x)-f(\phi_t(x))&=&-\int^t_0\frac{d}{dt}f(\phi_t(x))dt\notag\\
	 &=&-\int^t_0 df(\phi_t(x))[V_f(\phi_t(x))]dt\notag\\
	 &=&\int^t_0\frac{\|\na f(\phi_t(x))\|^2}{\sqrt{1+\|\na f(\phi_t(x))\|^2}}dt,\label{e:decreasef1}
	 \end{eqnarray}
	 hence $f$ is non-increasing along the flow line of $\phi_t$. If $\phi([0,T]\times\{x\})\subseteq\{x\in X||f-c|\leq\varepsilon\}$ then
	 by (\ref{e:decreasef1}) we have
	 $$f(\phi_T(x))\leq f(x)-\frac{\varepsilon^2 T}{\sqrt{1+\varepsilon^2}}\leq c+\varepsilon-\frac{\varepsilon^2 T}{\sqrt{1+\varepsilon^2}}.$$
	 So if $T>2\sqrt{1+\varepsilon^2}/\varepsilon$ then 
	 \begin{equation}\label{e:decresef}
	 f(\phi_T(x))< f(x)-\varepsilon.
	 \end{equation}

	 If $x\in F\cap \{x\in X|f(x)< c-\varepsilon\}$, since $f$ is non-increasing along the flow of $V_f$ one still have (\ref{e:decresef}) for any $T\geq 0$. Therefore by taking $T>2\sqrt{1+\varepsilon^2}/\varepsilon$ we conclude that 
	 $$\phi_T(F)\subseteq \big\{x\in X|f(x)<c-\varepsilon\big\}.$$
	 Since $\mathcal{F}$ is positively $(\phi,\Phi)$-invariant, we have $\phi_T(F)\in \mathcal{F}'$
	 and 
	 $$\mathcal{F}\ni\Phi(\phi_T(F))\subseteq \big\{x\in X|f(x)<c-\varepsilon\big\}$$
	 which contradicts the definition of the minimax value $c$. This completes the proof of the theorem. 
\end{proof}

\begin{rmk}
	In order to apply the generalized minimax principle, it is often useful to rescale the gradient vector field $\na f$ in various forms to obtain the desired positively complete flows. For example, a truncated flow fixing some sublevel of $f$ can be constructed as follows: let $\chi:\R\to\R$ be a smooth bounded function such that $\chi$ vanishes on $(-\infty, \lambda]$ and satisfies $\chi>0$ on $(\lambda, \infty)$, then the subset $\{x\in X|f(x)\leq\lambda\}$ is invariant under the flow of the vector field $$V^\chi_f=\frac{\chi(f)\na f}{1+\|\na f\|^2}.$$
	Clearly, the flow of $V^\chi_f$ is positively complete. Now if we assume that $\mathcal{F}$  is positively $(\phi,\Phi)$-invariant with this truncated flow and that $\lambda<c(f,\mathcal{F})$, then through a similar argument in the proof of Theorem~\ref{thm:minimaxP} one can find a $\PS$ sequence of $f$. 
\end{rmk}

\begin{rmk}
Although Theorem~\ref{thm:minimaxP} is not used directly in this article, the basic idea behinds this generalized minimax principle is the key to prove our main result. In certain cases we believe that when using the minimax principle to geometric problems the condition that the flow $\phi_t$ of a negative gradient vector field (or pseudo-gradient vector field) is positively $\phi$-invariant is unnecessary and could be loosen by using the geometric or topological properties of the underlying manifold itself. 
\end{rmk}

\section{The Palais-Smale condition}

Let $K\subseteq M$ be a compact set. We have the following Palais-Smale condition.

\begin{prop}\label{prop:PSc}
If $\{(x_n,T_n)\}_{n\in\bbN}\subset\M(K)$ is a $\PS$ sequence for $\Sk$ with $0<D_1\leq T_n\leq D_2<\infty$,  then this sequence has a convergent subsequence. 	
\end{prop}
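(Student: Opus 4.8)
The plan is to run the standard argument for Palais--Smale sequences of the free period action functional, now made easier by the fact that the loops $x_n$ all live in a fixed compact set $K$ and the periods $T_n$ are pinned in a compact interval $[D_1,D_2]\subset(0,\infty)$. First I would extract a weak limit: since $x_n(\T)\subseteq K$ and, by the quadratic lower bound~(\ref{e:quadgrowth1}) together with the boundedness of $\Sk(x_n,T_n)$ and $T_n\leq D_2$, the energies $\int_0^1|\dot x_n|^2\,dt$ are uniformly bounded, the sequence $\{x_n\}$ is bounded in $W^{1,2}(\T,M)$. Hence, after passing to a subsequence, $x_n\rightharpoonup x$ weakly in $W^{1,2}$ and uniformly (strongly in $C^0$), and $T_n\to T$ with $D_1\leq T\leq D_2$; in particular $x(\T)\subseteq K$ and $(x,T)\in\M(K)$. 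Because everything happens inside the compact set $K$ (and the compact set of velocities is irrelevant once we note $L$ is quadratic at infinity), I can work in a single coordinate chart / finite atlas and treat $L$ and its derivatives as bounded with bounded derivatives on the relevant compact region of $TM$.

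Next I would upgrade weak convergence of $x_n$ to strong convergence in $W^{1,2}$, which is the heart of the matter. The differential formula~(\ref{e:diffSk}) gives, for any $\xi\in T_{(x_n,T_n)}\M$ supported in the loop directions,
\[
d\Sk(x_n,T_n)[\xi,0]=\int_0^1\big\{T_nL_x(x_n,\dot x_n/T_n)[\xi]+L_v(x_n,\dot x_n/T_n)[\dot\xi]\big\}\,dt .
\]
Testing against $\xi=\xi_n$ built from the difference $x_n-x$ (transported appropriately, using a fixed embedding of $M$ or parallel transport along short geodesics in $K$ — legitimate since the $x_n$ converge uniformly), the Palais--Smale hypothesis $\|d\Sk(x_n,T_n)\|\to 0$ forces $d\Sk(x_n,T_n)[\xi_n,0]\to 0$. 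The term $\int_0^1 T_nL_x(x_n,\dot x_n/T_n)[\xi_n]\,dt\to 0$ because $L_x$ is bounded on the compact region, $T_n$ is bounded and $\xi_n\to 0$ in $C^0$. The remaining term $\int_0^1 L_v(x_n,\dot x_n/T_n)[\dot\xi_n]\,dt$ is handled by the uniform convexity~$(C_1)$: writing it as a difference of $L_v$ evaluated at $\dot x_n/T_n$ and at $\dot x/T_n$ plus an error that is small by uniform convergence of $x_n$ and weak convergence of $\dot x_n$, the monotonicity inequality
\[
\big(L_v(x,v_1)-L_v(x,v_2)\big)\cdot(v_1-v_2)\geq A_1|v_1-v_2|^2
\]
coming from $(C_1)$ yields $A_1\int_0^1|\dot x_n/T_n-\dot x/T_n|^2\,dt\to 0$, hence $\dot x_n\to\dot x$ in $L^2$, i.e.\ $x_n\to x$ strongly in $W^{1,2}$. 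Finally, the $T$-component of $d\Sk$ (Remark~\ref{e:timediffSk}) together with strong convergence shows the pair $(x_n,T_n)\to(x,T)$ in $\M$, and the convergence is genuinely in $\M(K)$ since $D_1\leq T\leq D_2$ keeps us away from the non-complete end $T\to 0$.

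The main obstacle I anticipate is the manifold bookkeeping in the convexity step: one must choose the test vector fields $\xi_n$ so that they are admissible elements of $T_{(x_n,T_n)}\M$, vanish in the limit in $C^0$ and in $W^{1,2}$-norm in a controlled way, and still ``see'' the full difference $\dot x_n-\dot x$ so that $(C_1)$ can be applied. Since all loops lie in the compact set $K$ one can do this by isometrically embedding $M\hookrightarrow\R^N$ (or a tubular neighborhood of $K$) and projecting, so the standard Euclidean argument for Palais--Smale of $\Sk$ goes through with only cosmetic changes; the compactness of $K$ and the bounds $D_1\le T_n\le D_2$ remove the usual difficulties (escape to infinity in $M$ and degeneration $T_n\to0$ or $T_n\to\infty$) that make the noncompact / free-period case delicate. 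This is essentially the argument of~\cite[Lemma~4.1 / Prop.~3.12]{Co1} adapted to $\M(K)$, and I would cite it for the routine parts.
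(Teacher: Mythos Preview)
Your proposal is correct and follows essentially the same route as the paper's proof: Nash-embed $M\hookrightarrow\R^N$, bound $\|\dot x_n\|_{L^2}$ via~(\ref{e:quadgrowth1}) and $T_n\le D_2$, extract a weak/$C^0$ limit, test $d\Sk$ against the tangential projection $w_n=\pi(x_n)[x_n-x]$, and use the monotonicity inequality from $(C_1)$ together with the tangential/normal decomposition $x_n-x=w_n+u_n$ (showing $\dot u_n\to 0$ in $L^2$) to upgrade to strong $W^{1,2}$-convergence. The only minor imprecision is your claim that ``$L_x$ is bounded on the compact region'': since the velocities are not a~priori bounded, the paper instead uses the quadratic growth $|L_x(x,v)|\le C(K)(1+|v|^2)$ for $x\in K$, which still kills that term because $\|\dot x_n\|_{L^2}$ is bounded and $\xi_n\to 0$ uniformly.
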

\begin{proof}
	By our assumption, $T_n$ is bounded, so up to a subsequence we may assume that $\{T_n\}$ converges to $T_*>0$.  It suffices to prove that $\{x_n\}$ has a convergent subsequence in $W^{1,2}$. 
	In what follows we will embed $M$ isometrically in $\R^N$ (for $N$ large enough) which is equipped with the Euclidean metric by the Nash's embedding theorem.
	
	Since $\{(x_n,T_n)\}_{n\in\bbN}\subset\M(K)$ is a $\PS$ sequence for $\Sk$, by~(\ref{e:quadgrowth1}) we have
	\begin{eqnarray}
	c+o(1)=\Sk(x_n,T_n)&=&T_n\int^1_0 L\bigg(x_n(t),\frac{\dot{x}_n(t)}{T_n}\bigg)dt+kT_n\notag\\
	&\geq&\frac{A_2}{T_n}\int^1_0|\dx_n(t)|^2dt+\big(A_3-k\big)T_n\notag\\
	&\geq&\frac{A_2}{D_2}\int^1_0|\dx_n(t)|^2dt-|A_3-k|D_2\label{e:bdfromT}
	\end{eqnarray}
from which we get that $\|\dx_n\|_{L^2}$ is bounded uniformly, where $\|\cdot\|_{L^2}$ denotes the $L^2$-norm with respect to the Riemannian metric on $M$. 
So $\{x_n\}$ is a bounded sequence in $W^{1,2}(\T, M)$ due to $x_n(\T)\subseteq K$ for all $n\in \bbN$. Up to considering a subsequence, we assume that there is $x\in W^{1,2}(\T, M)$ such that
\begin{gather}
x_n\stackrel{n}{\longrightarrow} x \quad\hbox{weakly in}\; W^{1,2}\; \hbox{and strongly in}\;L^\infty.\label{lim:wk}
\end{gather}

In the following we will prove that $\{x_n\}$ converges strongly to $x$ in $W^{1,2}$. Denote $$\pi(z):\R^N\to T_zM\quad\forall z\in M$$ the orthogonal projection onto $T_zM$, and consider
$$w_n(t)=\pi(x_n(t))[x_n(t)-x(t)].$$
Since $\pi(z)$ is smooth with respect to $z\in M$ and the image of every $x_n$ belongs to the compact set $K$, $w_n$ is bounded in $T_{x_n}W^{1,2}$. Since $\{x_n\}$ is a $\PS$ sequence, by (\ref{e:diffSk}) taking $(\xi, \al)=(w_n,0)$ implies
\begin{equation}\label{e:converge1}
\int^1_0\big\{T_nL_{x}(x_n(t),\dx_n(t)/T_n)[w_n]+L_v(x_n(t),\dx_n(t)/T_n)[\dot{w}_n]\big\}dt=o(1).
\end{equation}
If $x\in K$ then $|L_x(x,v)|\leq C(K)|v|^2$ for some constant $C(K)>0$. Hence,
\begin{equation}\notag
\big|L_{x}(x_n(t),\dx_n(t)/T_n)[w_n(t)]\big|\leq C(K)\bigg(\frac{|\dx_n|^2_{x_n}}{T_n^2}+1\bigg)|w_n(t)|_{x_n(t)}.
\end{equation}
Since $\|\dx_n\|_{L^2}$ is bounded uniformly and $w_n$ uniformly converges to zero, we conclude that the first integral in (\ref{e:converge1}) is infinitesimal. So we have
\begin{equation}\label{e:converge2}
\int^1_0L_v(x_n(t),\dx_n(t)/T_n)[\dot{w}_n(t)]dt=o(1).
\end{equation}
Set
\begin{equation}\label{e:un}
u_n(t)=\pi^{\perp}(x_n(t))[x_n(t)-x(t)]
\end{equation}
where $\pi^{\perp}(z)={\rm id}-\pi(z):\R^N\to T_zM$, then we get
\begin{equation}\label{e:decomxn}
x_n-x=w_n+u_n.
\end{equation}
 
The convexity hypothesis $(C_1)$ about the Lagrangian implies that
\begin{eqnarray}
&&L_v(x_n(t),\dx_n(t)/T_n)[\dx_n(t)-\dx(t)]-L_v(x_n(t),\dx(t)/T_n)[\dx_n(t)-\dx(t)]\notag\\
&=&\frac{1}{T_n}\int^1_0L_{vv}\bigg(x_n(t),\frac{\dx_n(t)}{T_n}+s\frac{\dx_n(t)-\dx(t)}{T_n}\bigg)\big[\dx_n(t)-\dx(t),\dx_n(t)-\dx(t)\big]ds\notag\\
&\geq&\frac{A_1}{T_n}|\dx_n(t)-\dx(t)|^2\label{e:estLv}.
\end{eqnarray}
By (\ref{e:quadgrowth3}), $L_v(x_n(t),\dx(t)/T_n)$ converges strongly in $L^2$. So the fact that $\dx_n(t)-\dx(t)$ converges weakly to zero in $L^2$ implies that
\begin{equation}\label{e:converge3}
\int^1_0L_v(x_n(t),\dx(t)/T_n)[\dx_n(t)-\dx(t)]dt=o(1).
\end{equation}
Combining (\ref{e:converge2})--(\ref{e:converge3}) yields
\begin{eqnarray}\notag
o(1)+\int^1_0L_v(x_n(t),\dx_n(t)/T_n)[\dot{u}_n]dt
\geq \frac{A_1}{T_n}\int^1_0|\dx_n(t)-\dx(t)|^2 dt.
\end{eqnarray}
By (\ref{e:quadgrowth3}), $L_v(x_n(t),\dx_n(t)/T_n)$ is uniformly bounded in $L^2$ due to the boundedness of $\|\dx_n\|_{L^2}$. Therefore to prove that $\{x_n\}$ converges strongly to $x$ in $W^{1,2}$, it suffices to prove that $\dot{u}_n$ converges strongly to $0$ in $L^2$.
	It follows from (\ref{e:un}) that
	\begin{equation}\label{e:du_n}
	\dot{u}_n(t)=\pi^\perp\big(x_n(t)\big)\big[\dot{x}_n(t)-\dot{x}(t)\big]+d\pi^\perp\big(x_n(t)\big)\big[\dot{x}_n(t)\big]\big[x_n(t)-x(t)\big].
	\end{equation}
	Using (\ref{lim:wk}) and the assumption that $x_n(\T)\subset K$ for all $n$ we deduce that 
	\begin{equation}\label{e:1thofdu_n}
	d\pi^\perp\big(x_n(t)\big)\big[\dot{x}_n(t)\big]\big[x_n(t)-x(t)\big]\stackrel{n}{\longrightarrow} 0 \quad\hbox{strongly in}\; L^2
	\end{equation}
	and
	 \begin{equation}\label{e:2thofdu_n}
	\pi^\perp\big(x_n(t)\big)\big[\dot{x}_n(t)-\dot{x}(t)\big]=-\pi^\perp\big(x_n(t)\big) \big[\dot{x}(t)\big]\stackrel{n}{\longrightarrow}0, \quad\hbox{strongly in}\; L^\infty,
	\end{equation}
	where we have used $\pi^\perp\big(x(t)\big) \big[\dot{x}(t)\big]=0$ because $\dot{x}(t)\in T_{x(t)}M$ for all $t\in\T$.
	By (\ref{e:du_n})--(\ref{e:2thofdu_n}) we have that 
	$\|\dot{u}_n\|_{L^2}$ tends to $0$. This completes the proof. 
	
\end{proof}

The following lemma, which was observed first by Contreras~\cite{Co1}, asserts that the only place where a $\PS$ sequence $(x_n,T_n)$ with $T_n\to 0$ fails to have a convergent subsequence is the level zero. 
\begin{lem}\label{lem:timeBL}
	Suppose that $L$ is quadratic at infinity.
	Let $\{(x_n,T_n)\}_{n\in\bbN}$ be a $\PS$ sequence of $\Sk$ which satisfies $T_n\to 0$ as $n\to\infty$. Then $c=0$.
\end{lem}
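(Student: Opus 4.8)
The plan is to estimate the action $\Sk(x_n, T_n)$ from below and above along the sequence, exploiting the smallness of $T_n$ together with the quadratic-at-infinity growth bounds~(\ref{e:quadgrowth1}) and the fact that $\{(x_n,T_n)\}$ is a Palais--Smale sequence. First I would use the lower bound in~(\ref{e:quadgrowth1}) exactly as in the computation leading to~(\ref{e:bdfromT}): since $\Sk(x_n,T_n) = T_n\int_0^1 L(x_n, \dx_n/T_n)\,dt + kT_n \geq \frac{A_2}{T_n}\int_0^1 |\dx_n|^2\,dt + (A_3-k)T_n$ and the left side converges to $c$, and since $T_n\to 0$ makes the term $(A_3-k)T_n$ negligible, I get $\frac{A_2}{T_n}\int_0^1|\dx_n|^2\,dt \leq c + o(1)$, hence $\int_0^1 |\dx_n(t)|^2\,dt = O(T_n) \to 0$. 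In particular the ``speed squared'' $|\dx_n|^2/T_n$, integrated against $T_n$, is itself $O(T_n)$.

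Next I would show $c \geq 0$ and $c \leq 0$ separately. For $c \geq 0$: the derivative condition $\|d\Sk(x_n,T_n)\| \to 0$ applied to the time direction (Remark~\ref{e:timediffSk}) gives $\int_0^1\{k - E_L(x_n,\dx_n/T_n)\}\,dt \to 0$, but more usefully, applying the full differential~(\ref{e:diffSk}) to the tangent vector $(\xi,\al)=(0, T_n)$ (or rescaling appropriately) combined with the Euler-identity relation $E_L = L_v[v] - L$, one extracts that $\Sk(x_n,T_n)$ is, up to $o(1)$, controlled by quantities that vanish with $T_n$; alternatively, and more cleanly, I would plug $(\xi,\al) = (x_n - (\text{point}), 0)$ type test vectors or simply use that $T_n \to 0$ forces $x_n$ to concentrate near a point, so that the loop $x_n$ converges in $L^\infty$ to a constant $x_* \in K$ (using $\int|\dx_n|^2 \to 0$ and Poincaré/Sobolev on $\T$). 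Then $\Sk(x_n, T_n) = T_n\int_0^1 L(x_n,\dx_n/T_n)\,dt + kT_n$, and using the upper bound $L(x,v) \leq A_4(1+|v|^2_x)$ from~(\ref{e:quadgrowth1}) gives $\Sk(x_n,T_n) \leq A_4 T_n + \frac{A_4}{T_n}\int_0^1|\dx_n|^2\,dt + kT_n = O(T_n) \to 0$, so $c \leq 0$. For the reverse inequality $c \geq 0$, I would use the Mañé-type bound: for any closed curve, $\Sk[\ga] = \int_0^T L(\ga,\dot\ga)\,ds + kT$, and since $L(x,v) \geq -C + R|v|$ for every $R$ (superlinearity $(C_2)$) — or more directly, bounding $L$ from below on the compact set $K$ by an affine function of $|v|$ — one gets $\Sk(x_n,T_n) \geq -C_K T_n + o(\text{stuff})$; combined with the action converging to $c$, letting $n\to\infty$ yields $c \geq 0$. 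Actually the cleanest route to $c\geq 0$ uses $(C_2)$ with a carefully chosen $R$ to beat the $kT$ term, exactly the mechanism behind the definition of the Mañé critical value; I would mirror Contreras's argument in~\cite[Lemma 2.3]{Co1}.

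Combining $c \leq 0$ and $c \geq 0$ gives $c = 0$, which is the claim.

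The main obstacle I expect is making the lower bound $c \geq 0$ fully rigorous: the crude estimate $\int_0^1|\dx_n|^2 = O(T_n)$ is easy, and the upper bound $c\leq 0$ follows immediately from it, but extracting $c \geq 0$ requires more than just nonnegativity of $L$ (which $L$ need not satisfy) — it requires using superlinearity $(C_2)$ to absorb the potentially negative contribution $kT_n$ and any negative part of $L$ into the term $R\int_0^1 |\dx_n|\,dt$ with $R$ large, while controlling cross terms. One must be careful that $\int_0^1 |\dx_n|\,dt$ is only $O(\sqrt{T_n})$ (by Cauchy--Schwarz from $\int|\dx_n|^2 = O(T_n)$), so the gain from superlinearity is of a lower order in $T_n$ than the $kT_n$ term — meaning a naive application fails and one genuinely needs the Palais--Smale condition on $d\Sk$ (the time-derivative identity from Remark~\ref{e:timediffSk}, forcing the average energy to approach $k$) to close the gap. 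Handling this interplay between the size of $T_n$, the $L^2$-smallness of $\dx_n$, and the energy constraint is the delicate point, and it is precisely where Contreras's original argument must be adapted.
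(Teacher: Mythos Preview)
Your argument for $c\leq 0$ has a genuine gap, and you have the two directions reversed in difficulty. From the lower quadratic bound you correctly obtain
\[
c+o(1)=\Sk(x_n,T_n)\ \geq\ \frac{A_2}{T_n}\int_0^1|\dx_n|^2\,dt+(k-A_3)T_n,
\]
which indeed gives $\int_0^1|\dx_n|^2\,dt=O(T_n)$; but note that this same inequality already forces $c\geq 0$ (take $\liminf$: the right-hand side is nonnegative up to an $O(T_n)$ term), so the direction you flagged as the ``main obstacle'' is in fact immediate. The real problem is in your upper bound: from $\int_0^1|\dx_n|^2\,dt=O(T_n)$ you only get $\tfrac{A_4}{T_n}\int_0^1|\dx_n|^2\,dt=O(1)$, \emph{not} $O(T_n)$, so your estimate $\Sk(x_n,T_n)\leq A_4T_n+\tfrac{A_4}{T_n}\int|\dx_n|^2+kT_n$ yields only $c\leq O(1)$, which is vacuous. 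To make the upper bound work you need the stronger estimate $\int_0^1|\dx_n|^2\,dt=O(T_n^2)$, equivalently $\int_0^1|\dx_n/T_n|^2\,dt=O(1)$.

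That stronger bound is precisely what the time-derivative Palais--Smale information provides, and this is how the paper proceeds. From $\partial_T\Sk(x_n,T_n)=o(1)$ one has $\int_0^1 E_L(x_n,\dx_n/T_n)\,dt=k+o(1)$; since $L$ quadratic at infinity gives $E_L(x,v)\geq a_1|v|^2-a_2$, this yields $\int_0^1|\dx_n/T_n|^2\,dt\leq (k+a_2+o(1))/a_1=O(1)$. Plugging this into the upper bound $|L(x,v)|\leq a_0(|v|^2+1)$ then gives $|c|+o(1)\leq T_n\cdot O(1)\to 0$, handling both inequalities at once. So you had identified the right tool (Remark~\ref{e:timediffSk}) but deployed it on the wrong side: it is needed for $c\leq 0$, not for $c\geq 0$, and once you use it the whole argument collapses to two lines with no need for superlinearity $(C_2)$, Ma\~{n}\'{e}-type bounds, or concentration of $x_n$ to a point.
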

\begin{proof}
	By the definition of $\PS$ sequence and (\ref{e:diffSk}),
	we have that
	\begin{equation}\label{e:PSc}
	o(1)=\frac{\partial\Sk}{\partial T}(x_n,T_n)=\int^1_0\big\{k-E_L\big(x_n(t),\dx_n(t)/T_n\big)\big\}dt.
	\end{equation}
	The assumption that $L$ is quadratic at infinity and the boundedness $(C_3)$ of $L$ imply
	\begin{gather}
	|L(x,v)|\leq a_0(|v|^2_x+1)\label{e:upperL}\\
	E_L(x,v)\geq a_1|v|_x^2-a_2\label{e:lowerE}
	\end{gather}
	for some positive constants $a_1,a_2$ and $a_3$.
	It follows from (\ref{e:PSc})  and (\ref{e:lowerE}) that
	\begin{equation}\label{e:estimatec0}
	k+o(1)\geq \int^1_0\bigg(a_1\frac{|\dx_n(t)|^2}{T_n^2}-a_2\bigg)dt.
	\end{equation}
	Since $\{(x_n,T_n)\}$ is a $\PS$ sequence, by (\ref{e:upperL}) we have 
	\begin{equation}\label{e:estimatec1}
	|c|+o(1)=|\Sk(x,T)|\leq T_n\int^1_0a_0\bigg(\frac{|\dx_n(t)|^2}{T_n^2}+1\bigg)dt+kT_n.
	\end{equation}
	Combining (\ref{e:estimatec0}) and (\ref{e:estimatec1}) implies $c=0$ provided that $T_n$ tends to zero. This completes the proof. 
	
\end{proof}

\section{The geometry of $\Sk$ on the space of loops in a compact set}
\subsection{The mountain pass geometry}\label{sec:mountain}
In this section we show that the action functional $\Sk$ on the space of loops supported in a compact set has the structure of ``mountain pass geometry". The proof of such a fact, which is essentially due to Contreras~\cite{Co1} in the case that $M$ is compact, is based on a locally quadratic isoperimetric inequality presented in the following. 

\begin{lem}[\cite{Co1}]\label{lem:isop}
	Let $\theta$ be a smooth $1$-form on $M$. Let $x_0\in M$ and let $U\subset M$ be an open subset centered at $x_0$ whose closure is diffeomorphic to a closed ball in $\R^m$. Then there exists a number $\mu>0$ such that if $\ga$ is a closed curve in $U$ then
	\begin{equation}\label{e:quadIsoIneq}
	\bigg|\int^1_0\ga^*\theta\bigg|\leq\mu \ell(\ga)^2,
	\end{equation}
	where $\ell(\ga)$ denotes the length of any curve $\ga$ in $M$.  
\end{lem}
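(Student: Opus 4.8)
The plan is to reduce the inequality to a fixed coordinate ball and there replace $\theta$ by its \emph{frozen} (constant–coefficient) model along the curve. Using the hypothesis on $U$, fix a diffeomorphism $\psi:\overbar{B}\to\overbar{U}$ from a closed Euclidean ball $\overbar{B}\subset\R^m$ onto $\overbar{U}$ (extending $\psi$ to a slightly larger open ball if one wants it defined on an open set), and set $\eta:=\psi^{*}\theta$, a smooth $1$-form on $\overbar{B}$. Given a closed curve $\ga$ in $U$, put $c:=\psi^{-1}\circ\ga$, a closed curve in $B$; by naturality of the pullback, $\int_0^1\ga^{*}\theta=\int_0^1 c^{*}\eta$. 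Since $\overbar{B}$ is compact, the metric on $M$ pulled back by $\psi$ and the Euclidean metric on $\overbar{B}$ are bi-Lipschitz equivalent, so there is a constant $\Lambda>0$, depending only on $\psi$ and $g$, with $\ell_{\mathrm{eucl}}(c)\le\Lambda\,\ell(\ga)$. Hence it suffices to find $\mu'>0$, depending only on $\eta$, with $\bigl|\int_0^1 c^{*}\eta\bigr|\le\mu'\,\ell_{\mathrm{eucl}}(c)^{2}$, and then take $\mu:=\mu'\Lambda^{2}$, which is manifestly independent of $\ga$.

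The next step is to freeze the coefficients of $\eta$ at the basepoint of the curve. Let $q:=c(0)\in\overbar{B}$ and let $\eta_0$ be the constant–coefficient $1$-form on $\R^m$ agreeing with $\eta$ at $q$, i.e. $\eta_0=\sum_i(\eta_q)_i\,dx^{i}$. Then $\eta_0=d\phi$ for the affine function $\phi(x)=\sum_i(\eta_q)_i\,x^{i}$, so $\int_0^1 c^{*}\eta_0=\phi(c(1))-\phi(c(0))=0$ because $c$ is a closed curve. Therefore $\int_0^1 c^{*}\eta=\int_0^1 c^{*}(\eta-\eta_0)$. Since $\eta-\eta_0$ vanishes at $q$ and $\overbar{B}$ is convex, the mean value inequality gives $|\eta_x-\eta_0|\le C_1|x-q|$ for all $x\in\overbar{B}$, where $C_1:=\sup_{\overbar{B}}\|D\eta\|<\infty$ by compactness.

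Finally one combines the two estimates along the curve. For every $t\in[0,1]$ one has $|c(t)-q|=|c(t)-c(0)|\le\int_0^{t}|\dot c(s)|\,ds\le\ell_{\mathrm{eucl}}(c)$, and therefore
\begin{align*}
\bigg|\int_0^1 c^{*}\eta\bigg|
=\bigg|\int_0^1(\eta-\eta_0)_{c(t)}[\dot c(t)]\,dt\bigg|
&\le\int_0^1 C_1\,|c(t)-q|\,|\dot c(t)|\,dt\\
&\le C_1\,\ell_{\mathrm{eucl}}(c)\int_0^1|\dot c(t)|\,dt
= C_1\,\ell_{\mathrm{eucl}}(c)^{2}.
\end{align*}
So $\mu':=C_1$ works, hence $\mu:=C_1\Lambda^{2}$ proves the lemma (and $\ga$ need only be absolutely continuous for all the quantities to make sense). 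The only genuinely nontrivial point — and the one to get right — is that the coefficients must be frozen at a point \emph{on} the curve rather than at the center $x_0$ of $U$: freezing at $x_0$ would only yield the trivial linear bound $\bigl|\int_0^1\ga^{*}\theta\bigr|\lesssim\operatorname{diam}(U)\,\ell(\ga)$, whereas with $q=c(0)$ the displacement $|c(t)-q|$ is itself controlled by $\ell(\ga)$, which is exactly what upgrades the estimate to quadratic order.
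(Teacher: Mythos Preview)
Your proof is correct. The paper itself does not give a proof of this lemma but simply refers the reader to \cite[Lemma~5.1]{Co1}, so there is no in-paper argument to compare against; your coefficient-freezing trick (freezing $\eta$ at the basepoint $c(0)$ of the curve, rather than at the center of $U$) is a clean, self-contained route to the quadratic bound, and your final remark correctly identifies why the choice of freezing point is the crux.
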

For the proof of this lemma we refer to \cite[Lemma~5.1]{Co1}.


\begin{prop}\label{prop:mountain}
	Let $K$ be a compact set in $M$ and let $k>e_0(L)$. 
	Then there is a positive number $a=a(K)$ such that 
	if $p:[0,1]\to\M(K)$ is a continuous path connecting any constant loop $(x_0,T)$ with $x_0\in K$ to a loop in $K$ satisfying $\Sk(p(1))<0$, then 
	$$\sup\limits_{s\in[0,1]}\Sk(p(s))>a.$$
\end{prop}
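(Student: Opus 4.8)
The plan is to prove the mountain pass geometry by showing that any loop lying in the compact set $K$ whose action $\Sk$ is negative must have length bounded below by a positive constant depending only on $K$ and $k$, and that along such a curve the action is bounded below by a positive constant on the ``annulus'' of intermediate lengths. The key point is that for $k>e_0(L)$ the purely kinetic plus energy part of the action, evaluated on short loops, is strictly positive, while the one-form contribution is controlled quadratically by the length via Lemma~\ref{lem:isop}.

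First I would reduce to the electromagnetic form of the Lagrangian locally: since $K$ is compact, cover a neighborhood of $K$ by finitely many charts whose closures are diffeomorphic to closed balls in $\R^m$, and in each such chart write $L(x,v)=\tfrac12|v|_x^2+\theta_x(v)+R(x,v)$ where $\theta_x(v)=L_v(x,0)[v]$ is the linear-in-$v$ part and $R$ collects the rest; the conditions $(C_1)$--$(C_3)$ together with $k>e_0(L)$ give, after shrinking, a uniform estimate of the shape
\begin{equation}\notag
\Sk(x,T)\;\geq\;\frac{A_2}{T}\int_0^1|\dx|^2\,dt\;-\;\mu\,\ell(\ga)^2\;+\;(k-e_0(L))T\;-\;o(\ell(\ga)^2),
\end{equation}
valid whenever $\ga=x(\cdot/T)$ has image in $K$ and length $\ell(\ga)$ small. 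Here the first and third terms come from the quadratic lower bound~(\ref{e:quadgrowth1}) and the choice of $k$, using $E_L(x,0)\le e_0(L)$, and the middle term comes from the local isoperimetric inequality applied chart by chart to the $1$-form $\theta$. Minimizing the right-hand side over $T>0$ (Cauchy–Schwarz gives $\tfrac1T\int|\dx|^2 + cT \ge 2\sqrt c\,\ell(\ga)$) produces a lower bound of the form $\Sk \ge c_1\ell(\ga) - c_2\ell(\ga)^2$ for positive constants $c_1,c_2$ depending only on $K$ and $k$.

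From this estimate the proposition follows by a standard continuity/connectedness argument. Set $\ell_0 = c_1/(2c_2)$, so that $\Sk(x,T)\ge \tfrac{c_1}{2}\,\ell(\ga) \ge \tfrac{c_1}{2}\cdot\tfrac{\ell_0}{2}>0$ whenever $\tfrac{\ell_0}{2}\le \ell(\ga)\le \ell_0$ with $\ga$ supported in $K$; put $a=a(K)=\tfrac{c_1\ell_0}{4}$ (and shrink $\ell_0$ if needed so the local estimate above is valid). Now given a path $p:[0,1]\to\M(K)$ from a constant loop $(x_0,T)$ — which has length $0$ — to a loop with $\Sk(p(1))<0<\tfrac{c_1}{2}\cdot\tfrac{\ell_0}{2}$, the function $s\mapsto \ell(\ga_s)$ is continuous, equals $0$ at $s=0$, and must exceed $\ell_0$ at $s=1$ (otherwise $\Sk(p(1))\ge \tfrac{c_1}{2}\ell(\ga_1)\ge 0$, a contradiction); hence by the intermediate value theorem there is $s_*\in(0,1)$ with $\ell(\ga_{s_*})=\tfrac{\ell_0}{2}$... more precisely, let $s_*$ be the first time $\ell(\ga_s)=\ell_0/2$; then $\ell(\ga_s)\le\ell_0/2\le\ell_0$ on a neighborhood before $s_*$, but we want a time where $\ell$ is in the band $[\ell_0/2,\ell_0]$, which $s_*$ itself provides, giving $\Sk(p(s_*))>a$.

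The main obstacle I expect is making the local-to-global passage rigorous: the isoperimetric inequality~(\ref{e:quadIsoIneq}) is stated for curves contained in a single ball-like chart, whereas a loop in $K$ need not lie in one chart. The remedy is to fix, using compactness of $K$, a Lebesgue-type number $\rho>0$ such that every subset of $K$ of diameter $<\rho$ lies in one of the finitely many chosen charts; then restrict attention to loops of length $<\rho$ (which is harmless, since we only need the estimate for short loops and we are free to shrink $\ell_0\le\rho$), for which the image automatically sits in a single chart and Lemma~\ref{lem:isop} applies directly with a uniform $\mu=\max_i\mu_i$. A secondary technical point is absorbing the remainder term $R$ and the $1$-form contribution on the overlap of the quadratic-at-infinity region — but since we are in the regime $|v|_x$ not necessarily small, one uses instead the global bounds~(\ref{e:quadgrowth1}) and~(\ref{e:quadgrowth3}) and the elementary estimate $|\int_0^1\ga^*\theta|\le \mathrm{const}\cdot\ell(\ga)$ combined with~(\ref{e:quadIsoIneq}); splitting into the cases $\ell(\ga)$ small (use isoperimetric, quadratic gain) and $\ell(\ga)$ not small (handled separately, as those loops are irrelevant to the band $[\ell_0/2,\ell_0]$) keeps the bookkeeping under control.
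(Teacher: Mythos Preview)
Your proposal is correct and follows essentially the same approach as the paper: Taylor-expand $L$ about $v=0$ to isolate the $1$-form $\theta_x(v)=L_v(x,0)[v]$, bound its line integral by $\mu\,\ell^2$ via Lemma~\ref{lem:isop} on a single chart selected through a Lebesgue-number argument, minimize the remaining $\tfrac{A_1}{2T}\ell^2+(k-e_0(L))T$ over $T$ to obtain $\Sk\ge c_1\ell-c_2\ell^2$ for short loops, and then use continuity of $s\mapsto\ell(\ga_s)$ to find an intermediate time in the positive band. The only inaccuracy is your citation of~(\ref{e:quadgrowth1}) for the first and third terms --- that bound yields $(k-A_3)T$ rather than $(k-e_0(L))T$; what you actually need (and evidently intend, since you invoke $E_L(x,0)\le e_0(L)$) is the second-order Taylor lower bound $L(x,v)\ge \tfrac{A_1}{2}|v|^2+\theta_x(v)-E_L(x,0)$ coming from $(C_1)$, which is the paper's~(\ref{e:lowBdL}), and after that correction the spurious $o(\ell^2)$ term disappears.
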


\begin{proof}
	
	Write $p(s)=(x_s,T_s)$, and consider the $1$-form on $M$ defined by
	\begin{equation}\label{e:oneform}
	\theta(x)[v]:=L_v(x,0)[v]\quad \forall v\in T_xM.
	\end{equation}
	
	Since $K$ is compact in $M$, there are finitely many open subsets $U_j$, $j=1,\ldots, n$ which cover $K$ such that for every $j$ the inequality (\ref{e:quadIsoIneq}) holds for the $1$-form given by (\ref{e:oneform}) with some number  $\mu_j(K)>0$ provided that $\ga$ is in $U_j$.
	Let $\widetilde{U}_j=U_j\cap K$ for all $j$, and let $\delta$ be a Lebesgue number for this covering of $K$. 
	
	Set $\mu=\inf\mu_j(K)$. In the following we will show that if $\Sk(x_1,T_1)<0$ then
	\begin{equation}\label{e:estlength}
	\ell(x_1)\geq\min\bigg\{\delta,\frac{\sqrt{A_1(k-e_0(L))}}{\sqrt{2}\mu}\bigg\}=:d.
	\end{equation}
	
	Clearly, if $\ell(x_1)\geq \delta$ then (\ref{e:estlength}) holds. If $\ell(x_1)< \delta$ then by the definition of the Lebesgue number there exists some $j\in\{1,2,\ldots,n\}$ such that $x_1$ is contained in $\widetilde{U}_j$ and thus in $U_j$.
	
	From our assumptions about $L$ we deduce that 
	\begin{eqnarray}
	L(x,v)&=&L(x,0)+L_v(x,0)[v]+\frac{1}{2}L_{vv}(x,sv)[v,v]\notag\\
	&\geq&\frac{A_1}{2}|v|^2_x+\theta(x)[v]-E_L(x,0),\label{e:lowBdL}
	\end{eqnarray}
	where the number $s=s(x,v)\in[0,1]$ in the first equality is given by the Taylor expansion. Then by~(\ref{e:lowBdL}) we have
	\begin{eqnarray}
		0>\Sk(x_1,T_1)&\geq& T_1\int^1_0\bigg\{A_1\frac{|\dx_1(t)|^2}{2T_1^2}+\frac{1}{T_1}\theta(x_1(t))[\dx_1(t)]\bigg\}dt\notag\\&&+T_1\int^1_0\big(k-E_L(x_1(t),0)\big)dt\notag\\
		&\geq&\frac{A_1}{2T_1}\ell^2(x_1)-\mu\ell^2(x_1)+T_1\big(k-e_0(L)\big)\label{e:SkLowBd}
    \end{eqnarray}
which, together with $k>e_0(L)$, implies that $T_1>\frac{A_1}{2\mu}$ and that
	$$\ell^2(x_1)>\frac{T_1(k-e_0(L))}{\mu}>\frac{A_1(k-e_0(L))}{2\mu^2}.$$
	So in this case we still have (\ref{e:estlength}). 
	
	Note that the length of every constant loop is zero. Fixing $r\in (0,d]$, by the continuousness of $\ell({x_s})$ with respect to $s\in[0,1]$ there exists some $s_0\in[0,1]$ such that $\ell({x_{s_0}})=r$.
	
   By (\ref{e:SkLowBd}) we find that
   \begin{eqnarray}\Sk(p(s_0))&\geq& \sqrt{2A_1(k-e_0(L))}\ell({x_{s_0}})-\mu\ell^2(x_{s_0})\notag\\
   &=&\sqrt{2A_1(k-e_0(L))}r-\mu r^2=:a.
   \end{eqnarray}
	Then $a>0$ because for $r\in[0,d]$ it holds that
	$$r\leq\frac{\sqrt{A_1(k-e_0(L))}}{\sqrt{2}\mu}
	<\frac{\sqrt{2A_1(k-e_0(L))}}{\mu}.$$
	
\end{proof}

Although Proposition~\ref{prop:mountain}
does not hold for $k<e_0(L)$, we have a variant of this proposition for the energy interval $(\inf E_L,e_0(L))$. If  $k>\inf E_L$, by definition there exists a point $x_*$  such that  $k>E_L(x_*,0)$.
Let $N$ be any compact topological submanifold (with boundary) of codimension $0$ in $M$ containing the point $x_*$ as an inner point. Following Taimanov~\cite{Ta0,Ta2}, we consider the set of continuous maps
\[ 
\begin{split}
\mathcal{D}_{N}=\big\{(z,\tau): &[0,1]\times N\to\M(N)\big|\;z(0,x_0)\equiv x_0\\&\hbox{and}\;\Sk(z(1,x_0),\tau(1,x_0))<0\;\forall x_0\in N\big\}.
\end{split}
\]
It turns out that $\mathcal{D}_{N}$ is not empty. This is a well-known fact which can be verified by using Bangert’s technique of  ``pulling
one loop at a time" see~\cite{Ba} for  this argument. Moreover, we have the 	analogue of Proposition~\ref{prop:mountain}:

\begin{prop}\label{prop:mountain2}
	For $k\in(\inf E_L, e_0(L))$, let $\mathcal{D}_{N}$ be the set given as above. 
	Then there is a positive number $a=a(N)$ such that 
    for every $(z,\tau)\in\mathcal{D}_{N}$ we have that 
	$$\sup\limits_{(s,x_0)\in[0,1]\times N}\Sk(z(s,x_0),\tau(s,x_0))>a.$$
\end{prop}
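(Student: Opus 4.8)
The plan is to run the argument of Proposition~\ref{prop:mountain} not on all of $N$, but on the individual paths $s\mapsto(z(s,x_0),\tau(s,x_0))$ with $x_0$ in a small ball about $x_*$, on which $k-E_L(\cdot,0)$ stays positive even though $k<e_0(L)$. First I would fix $\rho>0$ so small that $K_0:=\overline{B(x_*,\rho)}$ lies in the interior of $N$, is diffeomorphic to a closed Euclidean ball, and $e_*:=\sup_{x\in K_0}E_L(x,0)<k$; this is possible since $E_L(x_*,0)<k$ and $x_*$ is an interior point of $N$. Let $\theta$ be the $1$-form $\theta(x)[v]=L_v(x,0)[v]$, let $\mu=\mu(K_0)>0$ be the isoperimetric constant furnished by Lemma~\ref{lem:isop} on $K_0$, fix $r\in(0,\rho)$ with $r<\sqrt{2A_1(k-e_*)}/\mu$, and set $a=a(N):=\sqrt{2A_1(k-e_*)}\,r-\mu r^{2}>0$. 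Repeating the estimate (\ref{e:lowBdL})--(\ref{e:SkLowBd}) verbatim with $e_*$ in place of $e_0(L)$, for every $(\gamma,T)\in\M$ with $\gamma(\T)\subseteq K_0$ one obtains
\[
\Sk(\gamma,T)\;\geq\;\frac{A_1}{2T}\,\ell(\gamma)^{2}-\mu\,\ell(\gamma)^{2}+T(k-e_*)\;\geq\;\sqrt{2A_1(k-e_*)}\;\ell(\gamma)-\mu\,\ell(\gamma)^{2},
\]
using Cauchy--Schwarz in the form $\int_0^1|\dot\gamma|^2\ge\ell(\gamma)^2$ together with Lemma~\ref{lem:isop}. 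Since $t\mapsto\sqrt{2A_1(k-e_*)}\,t-\mu t^{2}$ is strictly positive on $(0,r]$, and since a constant loop at $y\in K_0$ has action $T\bigl(k-E_L(y,0)\bigr)\ge T(k-e_*)>0$, it follows that every $(\gamma,T)$ with $\gamma(\T)\subseteq K_0$ and $\ell(\gamma)\le r$ has $\Sk(\gamma,T)>0$, while every such $(\gamma,T)$ with $\ell(\gamma)=r$ has $\Sk(\gamma,T)\ge a$.

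Then I would fix $(z,\tau)\in\mathcal{D}_N$ and consider only the base points $x_0\in\overline{B(x_*,\rho-r)}$, a nonempty ball since $r<\rho$. For such an $x_0$ the path $s\mapsto z(s,x_0)$ begins at the constant loop at $x_0$, so $\ell(z(0,x_0))=0$, and --- as in Bangert's ``pulling one loop at a time'', which furnishes the elements of $\mathcal{D}_N$ --- it may be taken based at $x_0$ for all $s$. Hence, as long as $\ell(z(s,x_0))\le r$, every point of $z(s,x_0)$ lies within $\ell(z(s,x_0))/2\le r/2$ of $x_0$, hence within $\rho-r/2<\rho$ of $x_*$, so $z(s,x_0)(\T)\subseteq K_0$; by the displayed inequality this forces $\Sk(z(s,x_0),\tau(s,x_0))>0$ whenever $\ell(z(s,x_0))\le r$. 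Since $\Sk(z(1,x_0),\tau(1,x_0))<0$ by the definition of $\mathcal{D}_N$, we must have $\ell(z(1,x_0))>r$, and since $s\mapsto\ell(z(s,x_0))$ is continuous and vanishes at $s=0$ there is a smallest $s_0=s_0(x_0)\in(0,1)$ with $\ell(z(s_0,x_0))=r$; as $\ell(z(s,x_0))\le r$ for $s\le s_0$, the loop $z(s_0,x_0)$ lies in $K_0$, whence $\Sk(z(s_0,x_0),\tau(s_0,x_0))\ge a$. Taking the supremum over $[0,1]\times N$ gives $\sup\Sk(z(s,x_0),\tau(s,x_0))\ge a=a(N)>0$.

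The only step that is genuinely new relative to Proposition~\ref{prop:mountain} is keeping the short loops $z(s,x_0)$ inside the good ball $K_0$, which is what allows one to replace the global constant $e_0(L)$ by the local $e_*<k$; here the structure of $\mathcal{D}_N$ must be used. I expect this to be the main obstacle when one does not wish to invoke that the relevant homotopies fix the base point: in that case one has to show that every $(z,\tau)\in\mathcal{D}_N$ nonetheless sweeps through a loop of length $r$ contained in $K_0$, and I would try to do this by a degree/linking argument for the evaluation map $(s,x_0)\mapsto z(s,x_0)(0)$ on the connected component of $\{(s,x_0):z(s,x_0)(\T)\subseteq B(x_*,\rho),\ \ell(z(s,x_0))<r\}$ that contains $\{0\}\times B(x_*,\rho)$ --- on the boundary of which this map is either the identity (along $\{0\}\times K_0$) or sends its argument to distance $>\rho-r/2>\rho/2$ from $x_*$, since a short loop meeting $\partial B(x_*,\rho)$ has all of its points that far from $x_*$. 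Making this rigorous --- and, in particular, extracting from it a loop of the prescribed length $r$ rather than merely a short loop through $x_*$ --- is the delicate part; for the applications in this paper it is sidestepped by the based-loop construction of $\mathcal{D}_N$.
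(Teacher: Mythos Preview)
Your approach is the paper's: restrict to a small neighborhood of $x_*$ on which $k-E_L(\cdot,0)$ stays positive, then rerun the Proposition~\ref{prop:mountain} estimate with the local supremum $e_*$ (the paper writes $\sigma$) in place of $e_0(L)$. The paper is more economical than you are: it fixes the single base point $x_0=x_*$ and works only with the path $\Gamma(s)=(z(s,x_*),\tau(s,x_*))$, so your passage to a ball $\overline{B(x_*,\rho-r)}$ of base points is not needed --- one value of $x_0$ already gives a point where the supremum exceeds $a$.

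The one substantive discrepancy is the one you single out yourself. Your argument assumes that each $z(s,x_0)$ is based at $x_0$, which is not part of the definition of $\mathcal{D}_N$; so, as written, you prove the bound only for based homotopies, not for all of $\mathcal{D}_N$. The paper's sketch does not make this assumption, but it also does not explain why the short loops along $\Gamma$ cannot drift out of $U$ before their length reaches $r$: it passes directly from ``$\Gamma(0)=x_*$ and continuity give some $s_1$ with $\Gamma([0,s_1])\subset U$'' to the case $\Sk(\Gamma(s_1))<0$, without treating the possibility that $\Gamma$ first exits $U$ with length still below $r$. In other words, you are not missing an idea the paper supplies; you have correctly located the delicate step and the paper treats it as routine. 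Note, though, that restricting to Bangert's based construction does not settle the issue for the application in Theorem~\ref{thm: mainresult1}, since the truncated gradient flow used there does not preserve basedness and the minimax is taken over the full $\mathcal{D}_{N_0}$.
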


The proof of this proposition is similar to that of Proposition~\ref{prop:mountain}, for completeness we shall give a sketch of it. 

Given $k\in(\inf E_L, e_0(L))$, we pick a neighbourhood $V\subset N$ of $x_*$ so that
$$\sup_{x\in V}E_L(x,0)=-\inf_{x\in V}L(x,0)=:\sigma<k.$$

Let $U\subseteq V$ be an open ball centered at $x_*$ given by Lemma~\ref{lem:isop}. Let $A_1,\mu$ be as before. 
Set 
$$0<r<d:=\min\bigg\{\frac{\hbox{diam}(U)}{2},\frac{\sqrt{A_1(k-\sigma)}}{\sqrt{2}\mu}\bigg\}.$$

In what follows we shall show that for any $(z,\tau)\in\mathcal{D}_{N}$ the path $(z(s,x_*),\tau(s,x_*))$ of loops (denote it by $\Gamma(s)$ for short) has a time $0<s_0<1$ such that the loop $\Gamma(s_0)$ contained in $U$ has length $\ell(\Gamma(s_0))$ equal to $r$. 

Since $\Gamma(0)=x_*$, by the continuousness of $\Gamma(s)$ with respect to $s\in[0,1]$ there exists $s_1\in(0,1]$ such that  $\Gamma(s)\subset U$ for all $s\in[0,s_1]$. Letting $(x_1,T_1)=\Gamma(s_1)$,  similar to (\ref{e:SkLowBd}) if $\Sk(x_1,T_1)<0$ we have
\begin{equation}\label{e:SkLowBd'}
0>\Sk(x_1,T_1)\geq\frac{A_1}{2T_1}\ell^2(x_1)-\mu\ell^2(x_1)+T_1\big(k-\sigma\big).
\end{equation}
This, due to $k>\sigma$, implies that $\frac{A_1}{2T_1}-\mu<0$, and thus $T_1>\frac{A_1}{2\mu}$.
Then from (\ref{e:SkLowBd'}) we deduce that 
$$\ell^2(x_1)>\frac{T_1(k-\sigma)}{\mu-\frac{A_1}{2T_1}}>\frac{T_1(k-\sigma)}{\mu}>\frac{A_1(k-\sigma)}{2\mu^2}\geq d^2.$$
By the continousness of $\ell(\Gamma(s))$ with respect to $s$, there exists $0<s_0<1$ such that $\ell(\Gamma(s_0))=r$.    

Finally, similar to (\ref{e:SkLowBd'}), setting $(x_0,T_0)=\Gamma(s_0)$ we find that
\begin{eqnarray}
\Sk(\Gamma(s_0))&\geq&
\frac{A_1}{2T_0}\ell^2(\Gamma(s_0))-\mu\ell^2(\Gamma(s_0))+T_0\big(k-\sigma\big)\notag\\
&\geq& \sqrt{2A_1(k-\sigma)}\ell(\Gamma(s_0))-\mu\ell^2(\Gamma(s_0))\notag\\
&=&\sqrt{2A_1(k-\sigma)}r-\mu r^2=:a(N)>0.
\end{eqnarray}
Since $[0,1]\times\{x_*\}\subseteq [0,1]\times N$, we conclude the desired result.

\subsection{The simply-connected case } \label{subsec:simplyC}
Suppose that $M$ is simply connected and non-contractible. In this case we have that $c(L)=c_u(L)$. 
Let $K$ be a compact submanifold of $M$ such that $\pi_{n+1}(K)\neq0$ for some $n>0$. Using the tubular neighborhood theorem one can see that the inclusions
$$C^\infty(\T,K) \hookrightarrow\M(K)\hookrightarrow C^0(\T,K) $$
are homotopy equivalences. Here we remark that it is unclear to us that whether these inclusions are homotopy equivalences provided that $K$ is only a compact set (the tubular neighborhood theorem is not applicable). This the right reason that we ask that $K_0$ is a compact submanifold of $M$ in the definition of  ``homotopically $\lsh$".

Due to the isomorphism~$\pi_n(C^0(\T,K))\cong\pi_{n+1}(K)$, there exists a homotopically non-trivial free class
$$0\neq\mathcal{H}\in \big[S^n,C^0(\T,K))\big]\cong \big[S^n,\M(K)\big]$$
Consider the family of subsets in $\M$
$$\mathcal{G}_K=\big\{g(S^n)\big|g\in C^0(S^n,\M(K))\;\hbox{and}\;[g]=\mathcal{H}\big\}.$$

\begin{prop}\label{prop:simplyC}
	If $k>c_u(L)$, then
	$$d(k):=\inf\limits_{g(S^n)\in\mathcal{G}_K}\sup\limits_{(x,T)\in g(S^n)}\Sk(x,T)>0.$$
\end{prop}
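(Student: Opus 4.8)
The plan is to bound $\Sk$ from below on every cycle $g(S^n)$ in the class $\mathcal H$ by exhibiting, on each such cycle, a loop whose length is neither too small nor too large, and then to apply the mountain-pass-type estimate of Proposition~\ref{prop:mountain} along a suitable $1$-parameter subfamily. First I would observe that since $\mathcal H\neq 0$ in $[S^n,\M(K)]$ and constant loops form a subset of $\M(K)$ homotopy equivalent (via the inclusions $C^\infty(\T,K)\hookrightarrow\M(K)\hookrightarrow C^0(\T,K)$, which are homotopy equivalences by the tubular neighbourhood theorem) to $K$ itself, no representative $g$ of $\mathcal H$ can have image consisting entirely of constant loops, nor can it be contractible inside the space of \emph{short} loops. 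More precisely, I would fix a uniform constant: by compactness of $K$ and by Lemma~\ref{lem:isop} applied to the $1$-form $\theta(x)[v]=L_v(x,0)[v]$, cover $K$ by finitely many balls $U_j$ on each of which the quadratic isoperimetric inequality $|\int\ga^*\theta|\le\mu_j\ell(\ga)^2$ holds, let $\delta$ be a Lebesgue number of this cover, set $\mu=\sup\mu_j$, and put
$$
d_0=\min\Big\{\delta,\ \tfrac{\sqrt{A_1(k-e_0(L))}}{\sqrt2\,\mu}\Big\}>0,
$$
exactly as in the proof of Proposition~\ref{prop:mountain} (here $k>c_u(L)\ge e_0(L)$ for magnetic-type Lagrangians, but in general one should take $k>e_0(L)$; since $c_u(L)\ge c(L)\ge e_0(L)$ always — or argue by perturbing to quadratic-at-infinity — this is fine).

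The key topological step is: for every $g\in C^0(S^n,\M(K))$ with $[g]=\mathcal H$ there is a point $\xi\in S^n$ with $\ell$ of the underlying loop of $g(\xi)$ equal to $r$, for any prescribed $r\in(0,d_0]$. Indeed, consider the length function $\ell\circ g:S^n\to[0,\infty)$. If $\ell(g(\xi))<d_0$ for \emph{all} $\xi$, then by the usual argument each such short loop lies in a single chart ball, and one can contract $g$ within $\M(K)$ to a map into constant loops (shrink each loop inside its ball), which would force $\mathcal H$ to lie in the image of $\pi_n(K)\to\pi_n(\M(K))$ induced by the constant-loop inclusion; but under $\pi_n(\M(K))\cong\pi_n(C^0(\T,K))\cong\pi_{n+1}(K)\oplus\pi_n(K)$ the class $\mathcal H$ is the nonzero element of the $\pi_{n+1}(K)$ summand, a contradiction. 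Hence $\ell\circ g$ is not everywhere $<d_0$; since $S^n$ is connected and some constant loop may or may not be hit, one needs a marginally more careful argument — I would instead note that the restriction of $g$ to a small sphere is nullhomotopic while $g$ itself is not, producing by a connectedness/intermediate-value argument on $\ell\circ g$ a point $\xi$ with $\ell(g(\xi))=r$ for every $r\in(0,d_0]$. (Alternatively, and more robustly, one extracts a path in $g(S^n)$ from a nearly-constant loop to a loop of length $\ge d_0$ and uses continuity of $\ell$ along it.)

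Once such a $\xi$ is produced, the lower bound
$$
L(x,v)\ \ge\ \frac{A_1}{2}|v|_x^2+\theta(x)[v]-E_L(x,0)
$$
from the Taylor expansion (inequality \eqref{e:lowBdL}) combined with the isoperimetric inequality gives, writing $g(\xi)=(x_\xi,T_\xi)$ with $\ell(x_\xi)=r$,
$$
\Sk(g(\xi))\ \ge\ \frac{A_1}{2T_\xi}\ell^2(x_\xi)-\mu\,\ell^2(x_\xi)+T_\xi\big(k-e_0(L)\big)
\ \ge\ \sqrt{2A_1(k-e_0(L))}\,r-\mu r^2,
$$
where the last step is the elementary minimisation over $T_\xi>0$ of $\tfrac{A_1}{2T_\xi}r^2+T_\xi(k-e_0(L))$, exactly as at the end of the proof of Proposition~\ref{prop:mountain}. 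Choosing $r\in(0,d_0]$ makes the right-hand side a strictly positive constant $a=a(K)$ independent of $g$, so $\sup_{(x,T)\in g(S^n)}\Sk(x,T)\ge a>0$, and taking the infimum over $g$ yields $d(k)\ge a>0$. The main obstacle is the topological step: rigorously ruling out that an $\mathcal H$-cycle consists only of short loops, i.e.\ showing the "shrink every short loop in its chart" deformation is compatible with the homotopy class $\mathcal H$ and lands in $\M(K)$; once that is pinned down, the analytic estimate is a verbatim repetition of Proposition~\ref{prop:mountain}.
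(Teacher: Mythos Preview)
Your proof has a genuine gap in the ``key topological step.'' You claim that for every representative $g$ of $\mathcal H$ there exists $\xi\in S^n$ with $\ell(g(\xi))=r$ for any prescribed $r\in(0,d_0]$. What your short-loop-contraction argument actually yields (and this is essentially Klingenberg's theorem~\cite[Thm.~2.1.8]{Kl}) is only that $\sup_{\xi}\ell(g(\xi))\ge d_0$. Nothing prevents a representative $g$ from consisting \emph{entirely} of loops of length strictly greater than $d_0$; in that case no loop in $g(S^n)$ fits in a single chart ball, the quadratic isoperimetric inequality of Lemma~\ref{lem:isop} is unavailable, and your lower bound $\sqrt{2A_1(k-e_0(L))}\,r-\mu r^2$ cannot be obtained. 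The alternative you suggest --- ``extract a path in $g(S^n)$ from a nearly-constant loop to a loop of length $\ge d_0$'' --- presupposes exactly what may fail, namely that $g(S^n)$ contains a nearly-constant loop. Note also that your argument, if it worked, would only use $k>e_0(L)$, which should already be a warning sign: the statement is specifically about $k>c_u(L)$.

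The paper's proof takes a different route that uses $k>c_u(L)$ in an essential way. From Klingenberg's theorem one has $l:=\inf_{g}\sup_{\xi}\ell(g(\xi))>0$. Picking any $g$ with $\sup_{g(S^n)}\Sk\le d(k)+1=:A$, the loop $(x,T)$ realizing $\ell(x)\ge l$ satisfies, by the quadratic growth~\eqref{e:quadgrowth1}, a quadratic inequality in $T$ that forces $T\ge\mathcal T>0$ for some $\mathcal T$ depending only on $A,l,k$. Then one simply writes
\[
\Sk(x,T)=\mathcal S_{c(L)}(x,T)+(k-c(L))\,T\ \ge\ (k-c(L))\,\mathcal T\ >\ 0,
\]
using that $\mathcal S_{c(L)}\ge 0$ by the very definition of the Ma\~n\'e critical value (and $c(L)=c_u(L)$ since $M$ is simply connected). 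This directly gives $d(k)\ge (k-c(L))\mathcal T>0$. The point is that for long loops one cannot use the local isoperimetric estimate, but one can always use the global inequality $\mathcal S_{c_u(L)}\ge 0$; this is where the hypothesis $k>c_u(L)$ enters.
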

	
\begin{proof}
Since $\mathcal{H}$ is non-trivial and $K$ is compact, by~\cite[Theorem~2.1.8]{Kl} we have that
$$l:=\inf\limits_{g(S^n)\in\mathcal{G}_K}\sup\limits_{(x,T)\in g(S^n)}\ell(x)>0.$$
Here $\ell(\ga)$ denotes the length of any curve $\ga$ in $M$ with respect to some Riemannian metric of $M$.
If $(x,T)\in\M$ is a loop with $\ell(x)\geq l$ and $\Sk(x,T)\leq A$ for some number $A>0$, then by (\ref{e:quadgrowth1}) we have that
\begin{equation}\label{e:above}
\begin{split}
A\geq \Sk(x,T)\geq& A_2\int^T_0|\dx(t)|^2dt-A_3T+kT\\
\geq& A_2\frac{\ell(x)^2}{T}-(A_3-k)T,
\end{split}
\end{equation}
where we have required that $A_3\gg k$. Due to $\ell(x)\geq l$ we have that 
$$AT+(A_3-k)T^2-A_2l^2\geq 0.$$
It follows that
$$T\geq\frac{-A+\sqrt{A^2+4A_2(A_3-k)l^2}}{2(A_3-k)}=:\mathcal{T}>0.$$
So we have that
$$\Sk(x,T)=\mathcal{S}_{c(L)}(x,T)+(k-c(L))T\geq (k-c(L))\mathcal{T}>0$$
for $k>c(L)=c_u(L)$. By definition, there exists $g(S^n)\in\mathcal{G}_K$ such that 
$$\sup\limits_{(x,T)\in g(S^n)}\Sk(x,T)\leq d(k)+1.$$
Let $A=d(k)+1$. Then the above argument shows that the minimax value $d(k)$ is strictly positive.
\end{proof}

	
\section{The properties of $\lsh$ Lagrangians  }\label{sec:lshProperty}
Let $M$ be a complete Riemannian manifold and let $L:TM\to\R$ be a $\lsh$ Lagrangian. Suppose that $U$ is bounded subset of $M$. By definition, there exists a compact set $K_0$, a positive number $\varepsilon$ and a smooth map $$\varphi:K:=\overbar{B}_{K_0}(\varepsilon )\to K_0$$
such that $U\subseteq K_0$ and $\varphi^*L\leq L$ on $TM|_{K}$. 


Consider the map 
\[\Phi_K:\M(K)\longrightarrow \M(K_0),\quad \Phi_K(x,T)=(\varphi(x),T),
\]
where $\varphi(x)(t):=\varphi(x(t))$. 
Since $\varphi^*L\leq L$ on $TM|_{K}$, for any $c\in\R$ we have
\begin{equation}\label{e:lshproperty}
\Phi_K\big(\M(K)\cap\big\{\Sk\leq c\big\}\big)\subseteq
\M(K_0)\cap\big\{\Sk\leq c\big\}.
\end{equation}

Moreover, whenever $L$ is homotopically $\lsh$, one can define the map
\begin{gather}
\Psi:C^0(S^n,\M(K))\longrightarrow C^0(S^n,\M(K_0)),\quad g\longmapsto  g_0\notag
\end{gather}
which satisfies for all $n\in\bbN$
$$g_0(z)=\Phi_K(g(z))\quad \forall z\in S^n,\;\forall\;g\in C^0(S^n,\M(K))$$
By definition,  if we restrict $\Psi$ to the set $C^0(S^n,\M(K_0))$ then it induces the identical map on the set $[S^n,\M(K_0)]$ consisting of free homotopy classes.


\section{The proof of Theorem~\ref{thm: mainresult1}}
In this section we will apply a modified minimax principle to show the existence of contractible periodic orbits of convex Lagrangian systems with  energy levels in $(\inf E_L,c_u(L))$.

If $e_0(L)<k<c_u(L)$, by definition there exists a contractible loop $\ga(t)=x(t/T)$ such that
$$\Sk(x,T)<0.$$
Since the contractible component of the free loop space $\Lambda M$ is connected, there is a continuous path $p:[0,1]\to\M$ connecting a constant loop to $\ga$. Denote the union of the images of each loop $p(s)$ by $W$, i.e.,
$$W=\bigcup\limits_{s\in[0,1]}x_s(\T),$$
where $p(s)=(x_s,T_s)\in\M$. It is clear that $W$ is a compact subset in $M$. Then, due to the fact that $L$ is $\lsh$, by definition there exists a compact set $K_0$, a positive number $\varepsilon$ and a smooth map $$\varphi:K:=\overbar{B}_{K_0}(\varepsilon )\to K_0$$
such that $W\subseteq K_0$ and $\varphi^*L\leq L$ on $TM|_{K}$. 

Consider a family of subsets in $\M$
\[ 
\begin{split}
\mathcal{F}_{K}=\big\{p([0,1])\big|\;&p\in C^0([0,1],\M(K)),\;p(0)\;\hbox{is a constant loop in}\;K\;\hbox{and}\\&p(1)\;\hbox{is a loop in}\;K\;\hbox{satisfying}\;\Sk(p(1))<0\big\}.
\end{split}
\]
In a similar vein we define $\mathcal{F}_{K_0}$ via replacing $K$ by $K_0$. By our choice of $K$ and $K_0$ it holds that
$\emptyset\neq\mathcal{F}_{K_0}\subseteq\mathcal{F}_{K}$. It follows from (\ref{e:lshproperty}) that
$\varphi$ induces a map
\begin{equation}\label{e:pushback map}
\Phi_K:\mathcal{F}_K\to \mathcal{F}_{K_0}. 
\end{equation}

Now we consider the minimax value 
\begin{equation}\label{e:minimaxVal}
c(k)=\inf\limits_{F\in\mathcal{F}_{K_0}}\sup\limits_{(x,T)\in F}\Sk(x,T).
\end{equation}
By virtue of Proposition~\ref{prop:mountain}, if $k\in(e_0(L),c_u(L))$ then $c(k)>0$.

 Set $I=(e_0(L),c_u(L))$. Since for each loop $\ga$ the function $k\mapsto\Sk(\ga)$ is non-decreasing, the function $k\mapsto c(k)$ is non-decreasing. By Lebesgue's theorem there is a full measure subset in $I$ which is given by
 \[
 J=\big\{k\in I\big|\exists D>0, \;\exists\epsilon_0>0\; \hbox{such that}\;|c(k+\epsilon)-c(k)|<D|\epsilon|, \;\forall |\epsilon|<\epsilon_0\big\}.
 \]
 
In the following we will use a modified minimax principle and Struwe's arguments~\cite{St2} to show the following lemma.
\begin{lem}\label{lem:PS}
If $\kappa\in J$, then $\mathcal{S}_\kappa$ has a $\mathrm{(PS)}_{c(\kappa)}$ sequence of contractible loops in $K$ with uniformly bounded periods. 
\end{lem}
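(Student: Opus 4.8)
The plan is to run Struwe's monotonicity trick on the function $k\mapsto c(k)$, exploiting the Lipschitz estimate available at points $\kappa\in J$ to gain an a priori bound on the periods $T_n$ of an approximating Palais--Smale sequence. First I would fix $\kappa\in J$, choose $D>0$ and $\epsilon_0>0$ as in the definition of $J$, and pick a sequence $\epsilon_j\downarrow 0$ with $\epsilon_j<\epsilon_0$. For each $j$, by the definition of the minimax value $c(\kappa+\epsilon_j)$ there is $F_j=p_j([0,1])\in\mathcal{F}_{K_0}$ with
\[
\sup\limits_{(x,T)\in F_j}\mathcal{S}_{\kappa+\epsilon_j}(x,T)\leq c(\kappa+\epsilon_j)+\epsilon_j\leq c(\kappa)+(D+1)\epsilon_j.
\]
The key elementary observation is that for $(x,T)\in F_j$ with $\mathcal{S}_\kappa(x,T)$ close to $c(\kappa)$ (say within $\epsilon_j$), one has
\[
\epsilon_j T=\mathcal{S}_{\kappa+\epsilon_j}(x,T)-\mathcal{S}_\kappa(x,T)\leq c(\kappa)+(D+1)\epsilon_j-\big(c(\kappa)-\epsilon_j\big)=(D+2)\epsilon_j,
\]
so $T\leq D+2$ there; this is exactly how the monotonicity of $c$ and its local Lipschitz bound convert into a uniform bound on the period. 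On the other hand Remark~\ref{e:timediffSk} together with $\mathcal{S}_{\kappa+\epsilon_j}(x,T)\leq \mathcal{S}_\kappa(x,T)+\epsilon_j T$ lets one also control $T$ from below on the relevant part of $F_j$, using that $c(\kappa)>0$ by Proposition~\ref{prop:mountain} (loops where $T$ is very small have $\mathcal{S}_\kappa$ near $0<c(\kappa)$, hence cannot realize the sup).

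Next I would apply the deformation/minimax argument underlying Theorem~\ref{thm:minimaxP}: if no $\mathrm{(PS)}_{c(\kappa)}$ sequence of contractible loops in $K$ with bounded periods existed, then $\|d\mathcal{S}_\kappa\|$ would be bounded below by some $\varepsilon>0$ on the ``annular'' region
\[
\mathcal{A}=\{(x,T)\in\mathcal{M}(K)\mid c(\kappa)-\varepsilon\leq \mathcal{S}_\kappa(x,T)\leq c(\kappa)+\varepsilon,\ D_1\leq T\leq D_2\},
\]
for suitable $0<D_1<D_2$ coming from the period bounds of the previous paragraph. Flowing $F_j$ (for $j$ large, so $\epsilon_j$ is small compared to $\varepsilon$) by an appropriately rescaled and truncated negative gradient flow of $\mathcal{S}_\kappa$ — one that is stationary on $\{\mathcal{S}_\kappa\leq c(\kappa)-\varepsilon\}$ and that I must check keeps loops inside $\overbar{B}_{K_0}(\varepsilon)=K$, then push forward by $\Phi_K$ of (\ref{e:pushback map}) back into $\mathcal{F}_{K_0}$ — would produce an element of $\mathcal{F}_{K_0}$ on which $\mathcal{S}_\kappa<c(\kappa)-\varepsilon$, contradicting (\ref{e:minimaxVal}). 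Because $\Phi_K$ only decreases the action by (\ref{e:lshproperty}), pushing forward cannot destroy this gain, and because $p_j(0)$ is a constant loop in $K$ (a critical point of the flow at action value $\leq e_0(\kappa)\kappa<\ldots$, in any case $<c(\kappa)$) and $p_j(1)$ has negative action, the endpoints stay admissible along the deformation.

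The main obstacle I expect is the geometric bookkeeping of keeping the deformed loops inside the compact set $K$. The rescaled gradient flow of $\mathcal{S}_\kappa$ has speed $\leq 1$ in the $\mathcal{M}$-metric, so by Lemma~\ref{lem:boundDist} a flow time $T_{\mathrm{flow}}\sim 2\sqrt{1+\varepsilon^2}/\varepsilon$ moves each loop a bounded $d_M$-distance (roughly $\sqrt{1+2\sqrt6}\,T_{\mathrm{flow}}$); since $\varepsilon$ depends on the (contradiction) hypothesis and not on $\epsilon_j$, this excursion has a fixed size and need not lie in $K=\overbar{B}_{K_0}(\varepsilon)$ whose radius $\varepsilon$ is also fixed but unrelated. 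The resolution is to enlarge the compact data at the outset: apply the $\lsh$ property not to the bare set $W$ but to a bounded neighborhood of $W$ large enough that the flow, run for the time needed, cannot leave $K_0$ before the action has dropped below $c(\kappa)-\varepsilon$, after which the truncation freezes it; alternatively one localizes the flow by multiplying $V_{\mathcal{S}_\kappa}$ by a cutoff supported where $\mathcal{S}_\kappa>c(\kappa)-\varepsilon$ and exploits that on that sublevel the period and hence (by the length estimate in (\ref{e:bdfromT})) the loop diameter is controlled. Once the loops are confined, the rest is the standard deformation computation as in the proof of Theorem~\ref{thm:minimaxP}, and the uniform period bound $D_1\leq T_n\leq D_2$ is precisely what Proposition~\ref{prop:PSc} will later need; the exclusion of the degenerate case $T_n\to0$ is handled separately by Lemma~\ref{lem:timeBL} together with $c(\kappa)>0$.
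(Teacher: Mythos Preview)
Your overall plan---Struwe's monotonicity to bound periods, a truncated negative-gradient deformation, then push back by $\Phi_K$---matches the paper's strategy, and your derivation of the bound $T\le D+2$ on the ``high'' part of $F_j$ is exactly right. The gap is in how you resolve the obstacle you yourself identify. Your first workaround (enlarge the compact data so the flow cannot escape before the action drops by a fixed amount $\varepsilon$) is circular: the required flow time scales like $1/\delta$, but $\delta$ is the lower bound for $\|d\mathcal{S}_\kappa\|$ on a region that depends on $K$, which in turn depends on how far you need to flow. Your second workaround (cutoff in action plus period/length control) does not address the spatial displacement at all.

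The paper's resolution is the point you are missing: you do \emph{not} need to drop the action by a fixed $\varepsilon$, only by the infinitesimal amount $\eta_n\to0$ coming from Struwe's trick. Concretely, rescale the vector field to $X=-\rho(\mathcal{S}_\kappa)\nabla\mathcal{S}_\kappa/\sqrt{1+\|\nabla\mathcal{S}_\kappa\|^2}$ with a cutoff $\rho$ equal to a \emph{small} constant $\tau$ on $\{\mathcal{S}_\kappa\ge c(\kappa)/2\}$ and vanishing below $c(\kappa)/4$. Then $\|X\|\le\tau$, so by Lemma~\ref{lem:boundDist} the time-$1$ flow moves loops at most $C\tau$ in $d_M$; choosing $\tau<\varepsilon/C$ once and for all (where $\varepsilon$ is the fixed radius from the $\mathcal{L}$-shrinking data) confines everything to $K$, independently of $\delta$. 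The action drop along a flow line staying in $\{\mathcal{S}_\kappa\ge c(\kappa)-\eta_n\}$ is at least $\tau\delta^2/\sqrt{1+\delta^2}$, a \emph{fixed positive} number, which beats $(D+2)\eta_n$ for $n$ large. So $\phi_1(F_n)\subset\{\mathcal{S}_\kappa<c(\kappa)-\eta_n\}$, and applying $\Phi_K$ gives the contradiction with $c(\kappa)$. (One should also note, as the paper does, that the truncation at level $c(\kappa)/4$ keeps both endpoints of the path fixed---the constant loop $p_n(0)$ can be taken with small period so its action is below $c(\kappa)/4$, and $p_n(1)$ already has negative action---and guarantees positive completeness of the flow on the non-complete manifold $\mathcal{M}$.) Finally, the lemma only asserts periods bounded \emph{above}; you need not secure a lower bound here, since that is handled afterwards by Lemma~\ref{lem:timeBL} and $c(\kappa)>0$.
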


\begin{proof}[The proof of Lemma~\ref{lem:PS}] 
	Let $\{k_n\}_{n\in\bbN}$ be a sequence with $k_n> \kappa$ and $\lim_{n\to\infty}k_n=\kappa$. Set $\eta_n=k_n-\kappa<1$.  Then (\ref{e:minimaxVal}) implies that for every $n$ there exists $F_n\in\mathcal{F}_{K_0}$ such that 
	$$\max\limits_{(x,T)\in F_n}\mathcal{S}_{k_n}\leq c(k_n)+\eta_n.$$
	
	If $(x,T)\in F_n\cap\big\{(x,T)\in\M|\mathcal{S}_{\kappa}(x,T)\geq c(\kappa)-\eta_n\big\}$, then for sufficiently large $n$ due to $\kappa\in J$ we have that
	\[
	T=\frac{\mathcal{S}_{k_n}(x,T)-\mathcal{S}_{\kappa}
		(x,T)}{k_n-
		\kappa}\leq\frac{c(k_n)+\eta_n-c(\kappa)+\eta_n}
	{\eta_n}\leq D+2
	\]
	for some $D>0$, and that
	\[
	\mathcal{S}_{\kappa}(x,T)\leq\mathcal{S}_{k_n}(x,T)\leq c(k_n)+\eta_n\leq c(\kappa)+(D+1)\eta_n.
	\]
	Denote
	$$\mathcal{B}_n=\big\{(x,T)\in\M(K_0)\big|\;T\leq D+2\;\hbox{and}\;\mathcal{S}_\kappa(x,T)\leq c(\kappa)+(D+1)\eta_n\big\}.$$
	The above argument shows that
	\begin{equation}\label{e:bdT&A}
	F_n\subseteq \mathcal{B}_n\cup\big\{(x,T)\in\M(K_0)\big|\mathcal{S}_{\kappa}(x,T)< c(\kappa)-\eta_n\big\}.
	\end{equation}

	Let $\tau>0$ be a parameter (to be determined later) and let $\rho:\R\to[0,\tau]$ be a smooth function such that
	\[ 
	\rho(t)= 
	\begin{cases}\tau,\quad  \ \ & \mbox{if}\;
	t\in \big[\frac{c(\kappa)}{2},\infty),\\ 
	0, \quad \ \ & \mbox{if}\;t\in(-\infty,\frac{c(\kappa)}{4}).
	\end{cases} 
	\] 
	
	Set
	\[
	X=-\frac{\rho(\mathcal{S}_\kappa)\nabla \mathcal{S}_\kappa}{\sqrt{1+\|\nabla \mathcal{S}_\kappa\|^2}}.
	\]
	Let $\phi_t$ be the flow of $X$ on $\M$. Although 
	$\M$ is not complete, $\phi_t$ is positively complete because the sublevel set $\{(x,T)\in\M|\mathcal{S}_{\kappa}(x,T)\leq c(\kappa)/4\}$ is invariant under this flow and $X$ is bounded, and the restriction of $\M$ to $\{T\geq T_*\}$ is complete for every $T_*>0$. 
	
	Moreover, by Lemma~\ref{lem:boundDist}, the fact that
	$\|X\|\leq \tau$ implies that for every $F\in\mathcal{F}_{K_0}$ the images of all loops in $\phi_1(F)$ are contained in $\overbar{B}_{K_0}(C\tau)$,
	where $C=\sqrt{1+2\sqrt{6}}$.

	Fix $0<\tau<\varepsilon/C$. Since $B_{K_0}(\varepsilon)\subset K$ and $\mathcal{S}_{\kappa}$ is non-increasing along the flow line of $\phi_t$, by (\ref{e:bdT&A}) and Lemma~\ref{lem:boundDist} we have that 
	\begin{equation}\label{e:flowaway}
	\phi_1(\mathcal{F}_{K_0})\subseteq \mathcal{F}_{K} 
	\end{equation}
	and that
	\begin{equation}\label{e:bdTA}
	\phi([0,1]\times F_n)\subseteq \mathcal{Q}_n\cup\big\{(x,T)\in\M(K)\big|\mathcal{S}_{\kappa}(x,T)< c(\kappa)-\eta_n\big\},
	\end{equation}
	where $\mathcal{Q}_n:=\big\{(x,T)\in\M(K)\big|\;T\leq D+2+\tau\;\hbox{and}\;\mathcal{S}_\kappa(x,T)\leq c(\kappa)+(D+1)\eta_n\big\}$.
	
	In the following we shall show that there exists a sequence of  contractible loops $(x_n,T_n)\in\M(K)$ such that
	\begin{gather}
	(x_n,T_n)\in\phi([0,1]\times F_n)\cap\{\mathcal{S}_{\kappa}\geq c(\kappa)-\eta_n\},\notag\\
	\|d\mathcal{S}_{\kappa}(x_n,T_n)\|\stackrel{n}{\longrightarrow} 0.\notag
	\end{gather}
	By (\ref{e:bdTA}) such a sequence of loops in $K$ has uniformly bounded periods $T_n$.
	
Arguing by contradiction, we assume that there exists $0<\delta<1$ such that
\[
\|d\mathcal{S}_{\kappa}\|\geq\delta\quad\hbox{on}\;\phi([0,1]\times F_n)\cap\{\mathcal{S}_{\kappa}\geq c(\kappa)-\eta_n\}
\]
for sufficiently large $n$. We further require that $c(\kappa)-\eta_n\geq c(\kappa)/2>0$.
	
For any $(x,T)\in F_n$, if $\phi([0,1]\times\{(x,T)\})\subseteq \{\mathcal{S}_{\kappa}\geq c(\kappa)-\eta_n\}$ then we have
\begin{eqnarray}
\mathcal{S}_{\kappa}(\phi_1(x,T))&=&\mathcal{S}_{\kappa}(x,T)+\int^1_0\frac{d}{ds}\mathcal{S}_{\kappa}(\phi_s(x,T))ds\notag\\
&=&\mathcal{S}_{\kappa}(x,T)-\int^1_0\frac{\rho(\mathcal{S}_\kappa)\|\nabla \mathcal{S}_\kappa\|^2}{\sqrt{1+\|\nabla \mathcal{S}_\kappa\|^2}}(\phi_s(x,T))ds\notag\\
&\leq&c(\kappa)+(D+1)\eta_n-\frac{\tau\delta^2}
{\sqrt{1+\delta^2}},\notag
\end{eqnarray}
consequently, for $n$ large enough we have
\begin{equation}\label{e:sublevel}
\mathcal{S}_{\kappa}(\phi_1(x,T))\leq c(\kappa)-\eta_n.
\end{equation}
	
Since $\mathcal{S}_{\kappa}\circ \phi_t$ is non-increasing with $t$, for $(x,T)\in F_n\cap\{\mathcal{S}_{\kappa}\leq c(\kappa)-\eta_n\}$ we still have (\ref{e:sublevel}). 
So we have that
$$\max\limits_{(x,T)\in\phi_1(F_n)}\mathcal{S}_{\kappa}(x,T)\leq c(\kappa)-\eta_n.$$
Then it follows from (\ref{e:lshproperty}) and (\ref{e:flowaway}) that
$$\max\limits_{(x,T)\in\Phi_K(\phi_1(F_n))}\mathcal{S}_{\kappa}(x,T)\leq c(\kappa)-\eta_n$$
which obviously contradicts the definition of $c(\kappa)$ because $\Phi_K(\phi_1(F_n))\subset\mathcal{F}_{K_0}$ by (\ref{e:pushback map}) and (\ref{e:flowaway}). This completes the proof of the claim.
	
\end{proof}

\begin{proof}[The proof of Theorem~\ref{thm: mainresult1}] 
We first consider the case that $\kappa\in (e_0(L),c_u(L))$. 
	By Lemma~\ref{lem:PS}, for almost every $\kappa\in (e_0(L),c_u(L))$ there exists a Palais-Smale sequence of loops $(x_n,T_n)$ in some compact subset $K\subseteq M$ for $\mathcal{S}_{\kappa}$ with uniformly bounded $T_n$ at level $c(\kappa)$. Since $c(\kappa)>0$ by Proposition~\ref{prop:mountain}, it follows from Lemma~\ref{lem:timeBL} that $T_n$ is bounded away from zero. So there are two numbers $D_1$ and $D_2$ such that $$0<D_1\leq T_n\leq D_2<\infty.$$
By Proposition~\ref{prop:PSc}, the sequence  $(x_n,T_n)$ has a limiting point which gives us the periodic orbit of positive $(L+\kappa)$-action with energy $\kappa\in (e_0(L),c_u(L))$.
 
For the case that $\kappa\in(\inf E_L,e_0(L))$, the proof is similar to that in the above case. We only point out that  Proposition~\ref{prop:mountain2} plays the same role as Proposition~\ref{prop:mountain} with $\mathcal{F}_{K}$ replaced by $\mathcal{D}_{N}$ as given in Section~\ref{sec:mountain}, where $N$ is, in addition, required to satisfy the $\lsh$ property, i.e., there exists a compact subset $N_0$ of $M$, a positive number $\varepsilon$ and a smooth map $$\varphi:N=\overbar{B}_{N_0}(\varepsilon )\to N_0$$
such that $x_*\in N_0$ and $\varphi^*L\leq L$ on $TM|_{N}$.

\end{proof}

\section{The proof of Theorem~\ref{thm: mainresult2}}
In this section we prove the existence of periodic orbits of convex Lagrangian systems with  energy levels in $(c_u(L),\infty)$.

\begin{proof}[The proof of Theorem~\ref{thm: mainresult2}]
\textbf{We prove the first part of the theorem.} Since 
$M$ is non-contractible, there exists a homotopically non-trivial map $f\in C^0(S^{n+1},M)$ for some $n\geq0$, see~\cite[p.405, Cor.24]{Sp}. Since $f(S^{n+1})$ is a compact set in $M$ and $L$ is homotopically $\lsh$, there exists a compact submanifold $K_0$, a positive number $\varepsilon$ and a smooth map 
$$\varphi:K:=\overbar{B}_{K_0}(\varepsilon )\to K_0$$
such that $f(S^{n+1})\subseteq K_0$ and $\varphi^*L\leq L$ on $TM|_{K}$, and that the composition of the restriction of $\varphi$ to $K_0$ with the inclusion $K_0\hookrightarrow M$ is homotopic to this inclusion. Clearly, $\pi_{n+1}(K_0)\neq 0$ because $f$ is homotopically trivial in $M$ if it is in $K_0$. So there exists a homotopically non-trivial free class
$$0\neq\mathcal{H}\in \big[S^n,C^0(\T,K_0))\big]\cong \big[S^n,\M(K_0)\big].$$

Consider the collection of subsets in $\M$
$$\mathcal{G}_{K_0}=\big\{g(S^n)\big|g\in C^0(S^n,\M(K_0))\;\hbox{and}\;[g]=\mathcal{H}\big\}.$$
It follows from Proposition~\ref{prop:simplyC} that for all $k>c_u(L)$ we that
	$$d(k):=\inf\limits_{g(S^n)\in\mathcal{G}_{K_0}}\sup\limits_{(x,T)\in g(S^n)}\Sk(x,T)>0.$$

Next we show
\begin{lem}\label{lem:PSc}
	For every $k\in(c_u(L),\infty)$,  $\mathcal{S}_k$ has a $\mathrm{(PS)}_{d(k)}$ sequence of contractible loops in $K$ with uniformly bounded periods. 
\end{lem}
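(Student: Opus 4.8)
The plan is to mimic the argument of Lemma~\ref{lem:PS}, using the family $\mathcal{G}_{K_0}$ in place of $\mathcal{F}_{K_0}$ together with Struwe's monotonicity trick applied to the monotone map $k\mapsto d(k)$. First I would record that $k\mapsto d(k)$ is non-decreasing (since $k\mapsto\Sk(\ga)$ is non-decreasing for every fixed loop $\ga$), so by Lebesgue's differentiation theorem the set
\[
J=\big\{k\in(c_u(L),\infty)\big|\exists D>0,\exists\epsilon_0>0\;\hbox{s.t.}\;|d(k+\epsilon)-d(k)|<D|\epsilon|\;\forall|\epsilon|<\epsilon_0\big\}
\]
has full measure; it will be enough to produce the desired $\PS$ sequence for every $k\in J$, and then the finitely-many-exceptions case is upgraded to all of $(c_u(L),\infty)$ at the end of the section (as in the proof of Theorem~\ref{thm: mainresult1}) by a separate limiting argument, which I will defer.

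Fix $k\in J$. I would choose a sequence $k_n\downarrow k$ with $\eta_n:=k_n-k<1$ and, by definition of $d(k_n)$, representatives $g_n\in C^0(S^n,\M(K_0))$ with $[g_n]=\mathcal{H}$ and $\max_{z\in S^n}\mathcal{S}_{k_n}(g_n(z))\leq d(k_n)+\eta_n$. On the part of $g_n(S^n)$ where $\mathcal{S}_k\geq d(k)-\eta_n$, the identity $T=(\mathcal{S}_{k_n}-\mathcal{S}_k)/(k_n-k)$ together with $k\in J$ gives $T\leq D+2$ and $\mathcal{S}_k\leq d(k)+(D+1)\eta_n$, so $g_n(S^n)$ is contained in $\mathcal{B}_n\cup\{\mathcal{S}_k<d(k)-\eta_n\}$ with $\mathcal{B}_n$ the set of $(x,T)\in\M(K_0)$ satisfying these two bounds. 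Then, exactly as in Lemma~\ref{lem:PS}, I would introduce the truncated, rescaled negative gradient vector field $X=-\rho(\mathcal{S}_k)\nabla\mathcal{S}_k/\sqrt{1+\|\nabla\mathcal{S}_k\|^2}$, where $\rho$ vanishes below $d(k)/4$ and equals a small parameter $\tau\in(0,\varepsilon/C)$ above $d(k)/2$, with $C=\sqrt{1+2\sqrt6}$; its flow $\phi_t$ is positively complete (the sublevel $\{\mathcal{S}_k\leq d(k)/4\}$ is invariant and $\M\cap\{T\geq T_*\}$ is complete), and by Lemma~\ref{lem:boundDist} the bound $\|X\|\leq\tau$ forces $\phi_1(g_n(S^n))$ to lie in $\M(\overbar{B}_{K_0}(C\tau))\subseteq\M(K)$.

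Now suppose, for contradiction, that there is $0<\delta<1$ with $\|d\mathcal{S}_k\|\geq\delta$ on $\phi([0,1]\times g_n(S^n))\cap\{\mathcal{S}_k\geq d(k)-\eta_n\}$ for all large $n$ (shrinking $\eta_n$ so that $d(k)-\eta_n\geq d(k)/2$). For $z\in S^n$: if the whole orbit $\phi([0,1]\times\{g_n(z)\})$ stays in $\{\mathcal{S}_k\geq d(k)-\eta_n\}$, then $\mathcal{S}_k(\phi_1(g_n(z)))\leq d(k)+(D+1)\eta_n-\tau\delta^2/\sqrt{1+\delta^2}\leq d(k)-\eta_n$ for $n$ large; otherwise, since $\mathcal{S}_k\circ\phi_t$ is non-increasing, the orbit enters and stays in $\{\mathcal{S}_k<d(k)-\eta_n\}$, and again $\mathcal{S}_k(\phi_1(g_n(z)))\leq d(k)-\eta_n$. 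Hence $\max_{z}\mathcal{S}_k(\phi_1(g_n(z)))\leq d(k)-\eta_n$. But $\phi_1\circ g_n$ is homotopic to $g_n$ (the flow is a homotopy), so $[\phi_1\circ g_n]=\mathcal{H}$, and applying $\Phi_K$ we get $\Psi(\phi_1\circ g_n)\in C^0(S^n,\M(K_0))$ with the same class $\mathcal{H}$ by the homotopical $\lsh$ property (Section~\ref{sec:lshProperty}), while (\ref{e:lshproperty}) keeps its sup $\leq d(k)-\eta_n<d(k)$, contradicting the definition of $d(k)$. This yields a $\PS_{d(k)}$ sequence $(x_n,T_n)$ of contractible loops in $K$; the bound (\ref{e:bdTA})-type inclusion gives $T_n\leq D+2+\tau$, so the periods are uniformly bounded. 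The main obstacle, and the only place where the argument genuinely differs from Lemma~\ref{lem:PS}, is verifying that $\phi_1\circ g_n$ still represents the class $\mathcal{H}$ after the pushforward by $\varphi$ — this is exactly where homotopical (rather than plain) $\lsh$ is needed, and it uses the fact established in Section~\ref{sec:lshProperty} that $\Psi$ induces the identity on $[S^n,\M(K_0)]$.
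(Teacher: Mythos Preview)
Your argument establishes the conclusion only for $k$ in the full-measure set $J$, not for \emph{every} $k\in(c_u(L),\infty)$, which is what the lemma asserts. The claim that the complement of $J$ consists of ``finitely many exceptions'' is incorrect: a monotone function can fail to be locally Lipschitz on an uncountable (null) set, and no ``separate limiting argument'' of the type you defer is available here (nor does the paper supply one). So as written the proof has a genuine gap at exactly the point that distinguishes this lemma from Lemma~\ref{lem:PS}.

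The paper avoids Struwe's trick entirely for $k>c_u(L)$. The period bound comes for free from the identity
\[
\Sk(x,T)=\mathcal{S}_{c_u(L)}(x,T)+(k-c_u(L))T
\]
together with Lemma~\ref{lem:bddbelow}, which says $\mathcal{S}_{c_u(L)}$ is bounded below on each connected component of $\M$. Hence any upper bound on $\Sk(x,T)$ immediately yields an upper bound on $T$, with no differentiability of $k\mapsto d(k)$ needed. One then picks, for the \emph{fixed} $k$, a sequence $g_l(S^n)\in\mathcal{G}_{K_0}$ with $\sup\Sk\leq d(k)+\epsilon_l$ and runs the same truncated-flow contradiction argument you describe; the homotopical $\lsh$ property is used exactly where you indicate, to ensure $\Phi_K\circ\phi_1\circ g_l$ still represents $\mathcal{H}$. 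Apart from the source of the period bound, your outline matches the paper's.
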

The proof of Lemma~\ref{lem:PSc} is similar to that of Lemma~\ref{lem:PS}, so we will only give a sketch of it. The difference between these two lemmata is that we can obtain a Palais-Smale sequence  of contractible loops  with uniformly bounded periods for \textbf{all} energy levels in $(c_u(L),+\infty)$ in Lemma~\ref{lem:PSc} but only obtain one for \textbf{almost all} energy levels in $(\inf E_L,c_u(L))$ in Lemma~\ref{lem:PS} due to the usage of Struwe's argument. 

\noindent{\bf The proof of Lemma~\ref{lem:PSc}.}
Firstly, by definition, for every $k\in(c_u(L),\infty)$ one can find a sequence of subsets $g_l(S^n)\in\mathcal{G}_{K_0}$, $l\in\bbN$ such that 
\[\Sk(x,T)\leq d(k)+\epsilon_l,\quad\forall (x,T)\in g_l(S^n),\]
where $\epsilon_l>0$ is infinitesimal. As before, one can rescale the negative gradient vector field $\nabla \Sk$ suitably to a positively complete and bounded vector field on $\M$ so that $\phi\big([0,1]\times g_l(S^n)\big)\subset \M(K)$ by virtue of Lemma~\ref{lem:boundDist}, where $\phi(t,\cdot)$ denotes the flow of this rescaled vector field. Moreover, we have that
\[
\phi_1\big(g_l(S^n)\big)\subseteq \mathcal{P}_n\cup\big\{(x,T)\in\M(K_0)\big|\mathcal{S}_{k}(x,T)< d(k)-\epsilon_l\big\}.
\]
Here $\mathcal{P}_n$ is a subset of $\M$ whose elements have bounded $\mathcal{S}_{k}$-actions. We notice that elements of  $\mathcal{P}_n$ have uniformly bounded periods. In fact, we have the identity
\[
\Sk(x,T)=\mathcal{S}_{c_u(L)}(x,T)+(k-c_u(L))T,
\]
and $\mathcal{S}_{c_u}$ is bounded from below by Lemma~\ref{lem:bddbelow}. So for every $k>c_u(L)$ the periods $T$ of elements $(x,T)$ with bounded $\Sk$-actions are bounded from above.  

Next, we show that there exists a sequence of  contractible loops 
\[
(x_l,T_l)\in\phi([0,1]\times g_l(S^n))\cap\{(x,T)\in\M(K)\big|\mathcal{S}_{k}\geq d(k)-\epsilon_l\}
\]
with uniformly bounded $T_l$ such that $\|d\mathcal{S}_{k}(x_l,T_l)\|\stackrel{l}{\longrightarrow} 0$. To do this, by contradicion we argue as before and deduce that 
\[
\max\limits_{(x,T)\in \phi_1(g_l(S^n))}\Sk(x,T)\leq d(k)-\epsilon_l
\]
for sufficiently large $l$. Fix such an integer $l$. The properties of homotopically  $\lsh$ Lagrangian show that
\begin{equation}\label{e:loweraction}
\max\limits_{(x,T)\in \Phi_K\circ\phi_1(g_l(S^n))}\Sk(x,T)\leq d(k)-\epsilon_l.
\end{equation}
Observe that $\Phi_K\circ\phi_1\circ g_l$ represents $\mathcal{H}$. This is because in the space $C^0(S^n,\M(K_0))$ $\Phi_K\circ\phi_1\circ g_l$ is homotopic to $\Phi_K\circ g_l$ via $\Phi_K\circ\phi_t\circ g_l$, $t\in[0,1]$ and $\Phi_K\circ g_l$ is homotopic to $g_l$  (see Section~\ref{sec:lshProperty}). So
(\ref{e:loweraction}) is in contradiction with the definition of the minimax value $d(k)$. 

Finally, we proceed the proof exactly as in the proof of Theorem~\ref{thm: mainresult1} by replacing Proposition~\ref{prop:mountain} with Proposition~\ref{prop:simplyC}. This completes the proof of the first part of  Theorem~\ref{thm: mainresult2}.

\textbf{Now we prove the second part of the theorem.} The proof is similar to the one in the first part. The key is to use a  modified minimax principle to obtain a Palais-Smale sequence of loops taking values in a compact subset of $M$.

Let $\ga$ be any 
non-contractible closed curve which represents the class $\al$. Since $L$ is homotopically $\lsh$, one can take a compact subset $K_1$ containing the image of $\ga$ in $M$, a positive number $\varepsilon$ and a smooth map 
$$\varphi:K:=\overbar{B}_{K_1}(\varepsilon )\to K_1$$
such that $\varphi^*L\leq L$ on $TM|_{K}$ and that the composition of the restriction of $\varphi$ to $K_1$ with the inclusion $K_1\hookrightarrow M$ is homotopic to this inclusion. Let $\M_\al(K_1)$ (resp. $\M_\al(K)$) denote the connected component of $\M(K_1)$ (resp. $\M(K)$) corresponding to $\al$. By Lemma~\ref{lem:bddbelow}, for $k>c_u(L)$ the functional $\Sk$ is bounded from below on each non-trivial free homotopy class. So  the number
$$e(k):=\inf\big\{\Sk(x,T)|(x,T)\in\M_\al(K_1)\big\}$$
is finite for every $k\in(c_u(L),\infty)$.  By definition, there exists a sequence of points $(x_n,T_n)\in\M_\al(K_1)$ such that 
\[\Sk(x_n,T_n)\leq e(k)+\eta_n,\quad\forall  n\in\bbN\]
where $\eta_n>0$ is infinitesimal. Once again, we can rescale the negative gradient vector field $\nabla \Sk$ suitably to show that there exists a $(PS)_{e(k)}$ sequence $\{(y_n,\tau_n)\}$ of  bounded $\Sk$-actions  such that 
\[
(y_n,\tau_n)\in\phi\big([0,1]\times \{(y_n,\tau_n)\}\big)\cap\{(x,T)\in\M_\al(K)\big|\mathcal{S}_{k}\geq e(k)-\eta_n\},
\]
where $\phi(t,\cdot)$ is the flow of the corresponding rescaled vector field. 
As before,  since $k>c_u(L)$ by Lemma~\ref{lem:bddbelow}  the periods $\tau_n$ are bounded from above. Moreover, 
$\tau_n$ is bounded away from zero. In fact, due to the  compactness of $K$ the infimum of lengths of loops with images in $K$ representing the free homotopy class $\al$ is positive.  Since $\Sk(y_n,\tau_n)$ are bounded, from (\ref{e:above}) we deduce that if $\tau_n\to 0$  as $n\to\infty$ then the lengths of loops $\ga_n(t)=y_n(t/\tau_n)$ go to zero, which is impossible since each $\ga_n$ represents $\al$. Then it follows from Proposition~\ref{prop:PSc} that we have a convergent subsequence of $\{(y_n,\tau_n)\}$  and hence a periodic orbit representing $\al$. This completes the proof.

\end{proof}

\section{Appendix: Examples of homotopically $\lsh$ Lagrangians}\label{app:lsh}

\begin{example}
	Let $(M,g)$ be a noncompact complete Riemannian manifold. Suppose that there exists an increasing sequence of compact submanifolds $M_i\subset M$, that is,
	$$M_1\subset M_2\subset M_i\subset \cdots \subset M\quad \hbox{and}\quad \bigcup\limits_{i}M_i=M$$
	and a vector field $V$ on $M$ which  is everywhere transverse to each $\partial M_i$ pointing outwards and satisfies that 
	$$\mathbb{L}_Vg=\lambda g$$
	for some nonnegative number $\lambda$, where $\mathbb{L}_V$ is the Lie derivative along $V$. Then the standard kinetic energy $L(x,v)=\|v\|^2_g/2$ on $TM$ is homotopically $\lsh$. 
	
\end{example}

\begin{example}\label{eg:ls}
	Let $M$ be a closed Riemannian manifold with the metric $g$. Let $\lambda,\mu\in C^\infty([0,\infty),\R)$ be two positive and monotone increasing functions and let $\phi$ be a positive function on $M$. Consider $\widetilde{M}=\R^k\times M$, where $k\geq 1$. Define the metric on $\widetilde{M}$ as
	$$\widetilde{g}(x,y)=\phi(y)g_{\R^k}(x)+\lambda(|x|^2)g(y)
	\quad \forall (x,y)\in\widetilde{M},$$
	where $g_{\R^k}$ and $|\cdot|$ are the standard metric and the norm with respect to
	the standard inner product on $\R^k$ respectively. Let $\theta$ be a smooth $1$-form on $M$. Let $V$ be a smooth function on $M$ and let $\pi: \widetilde{M}\to M$ be the projection map. Let $L:T\widetilde{M}\to\R$ be a Lagrangian of the form
	$$L(z,v)=\frac{1}{2}\widetilde{g}_z[v,v]+\big(\pi^*\theta\big)_z[v]+\mu(|x|^2)V(y)\quad \forall z\in \widetilde{M}\setminus N\;\hbox{and}\;\forall v\in T_z\widetilde{M},$$
	where $N$ is some compact subset of $M$. 
	Then $L$ is homotopically $\lsh$. In fact, for every compact set $K\subset \widetilde{M}$ there is a number $r_0=r_0(K)>0$ such that $K\subset B_0(r_0)\times M$. In the spherical polar coordinate $(r,\Theta)\in (0,\infty)\times S^{k-1}$ on $\R^k$ the standard metric can be written as $g_{\R^k}=dr^2+r^2\beta$ with the standard metric $\beta$ on the unit sphere $S^{k-1}$. Let $r_0<r_1<r_2$. Now we take a positive function $f\in C^\infty([0,\infty),\R)$ such that $f(r)=r$ on $[0,r_0]$, $f'<1$ on $[r_1,\infty) $, $0<f'\leq 1$  and $\lim_{r\to+\infty}f(r)=+\infty$. 
	
	Define the map 
	$$ \varphi: \overbar{B}_0(r_2)\setminus\{0\}\times M\longrightarrow \overbar{B}_0\big(f(r_2)\big)\times M,\quad (r,\Theta, y)\longmapsto (f(r),\Theta, y)$$
	and extend this map to a map (also denoted by $\varphi$) defined on $\overbar{B}_0(r_2)\times M$ by letting $\varphi=id$ on $\{0\}\times M$. 
	Clearly, $\varphi$ is smooth map which satisfies $\varphi^*L\leq L$ on $TM|_{\overbar{B}_0(r_2)\times M}$. So due to $\varepsilon:=r_2-f(r_2)>0$ we conclude that   $L$  is homotopically $\lsh$. 
\end{example}

\begin{example}
	If each Lagrangian $L_i:TM_i\to\R$, $i=1,\ldots, k$ is (homotopically) $\lsh$, then $L=\sum_i\pi^*_i L_i$  is (homotopically) $\lsh$ for the product Riemannian manifold $M=\prod_iM_i$, where $\pi_i$ is the projection map from $M$ to $M_i$. 
\end{example}




\begin{thebibliography}{SK}
	
	
	
	
	
	\bibitem{Ar} Arnold, V. I., Some remarks on flows of line elements and frames, {\it Dokl. Akad. Nauk. SSSR} {\bf 138} (1961), 255--257.
	
	
	
	
	
	
	
	
	
	
	
	
	
	\bibitem{Ba} Bangert, V.,  Closed geodesics on complete surfaces. {\it Math. Ann.} {\bf 251} (1980), 83--96.
	
    \bibitem{BeG} Benci, V. and Giannoni, F., On the existence of closed geodesics on noncompact Riemannian manifolds. {\it Duke Math. J. } {\bf 68} (1992), 195--215.
           
    \bibitem{BPV} van den Berg, J. B.,  Pasquotto, F.  and Vandervorst, R. C. A. M., Closed characteristics on noncompact hypersurfaces in $\R^{2n}$,  {\it Math. Ann.} {\bf  343} (2009), 247--284.
    
     \bibitem{BPRV} van den Berg, J. B.,  Pasquotto, F., Rot, T.  and Vandervorst, R. C. A. M.,  On periodic orbits in
cotangent bundles of non-compact manifolds, {\it  J. Symplectic Geom.} {\bf 14} (2016), 1145--1173.
    
   
    \bibitem{Co} Contreras, G.,  Action potential and weak KAM solutions. {\it Calc. Var. Partial Differential Equations}  {\bf 13} (2001), 427--458.

     \bibitem{Co1} Contreras, G., The Palais-Smale condition on contact type energy levels for convex Lagrangian systems. {\it Calc. Var. Partial Differential Equations}  {\bf 27} (2006),  321--395.

    \bibitem{CIPP1} Contreras, G. Iturriaga,  R.  Paternain, G.  and Paternain, M., Lagrangian graphs, minimizing measures and Ma\~{n}\'{e}'s critical values, {\it Geom. Funct. Anal.} {\bf 8} (1998), 788--809.
    
    \bibitem{CIPP2} Contreras, G. Iturriaga,  R.  Paternain, G. and Paternain, M., The Palais-Smale condition
    and Ma\~{n}\'{e}'s critical values, {\it Ann. Henri Poincar\'{e}.} {\bf 1} (2000), 655--684.
    
    
    \bibitem{CMP}  Contreras, G.,  Macarini, L. and  Paternain, G. P.,  Periodic orbits for exact magnetic flows on surfaces. {\it Int. Math. Res. Not. } {\bf 8 } (2004), 361--387.
    
	
    
    \bibitem{FM} Fathi, A. and Maderna, E.,  Weak KAM theorem on noncompact manifolds. {\it NoDEA Nonlinear Differential Equations Appl.} {\bf 14} (2007),  1--27. 

	\bibitem{FS} Frauenfelder, U. and Schlenk, F., Hamiltonian dynamics on convex symplectic manifolds. {\it Israel J. Math.} {\bf 159} (2007), 1--56.
		
	
	\bibitem{Gi2} Ginzburg, V. L., {\it On closed trajectories of a charge in magnetic field.} An application of symplectic geometry, Contact and Symplectic Geometry (Cambridge, 1994), Publ. Newton Inst., vol. 8, Cambridge University Press, Cambridge, 1996, pp. 131–148.
	
	\bibitem{GiK} Ginzburg, V. L.  and Kerman, E., {\it Periodic orbits in magnetic fields in dimensions greater than two,} Geometry and Topology in Dynamics, Contemp. Math., vol. 246, American Mathematical Society, Rhode Island, 1999, pp. 113--121.
	
	\bibitem{GiG} Ginzburg, V. and G\"{u}rel, B. Z., Periodic orbits of twisted geodesic flows and the Weinstein-Moser theorem, {\it Comment. Math. Helv}. {\bf 84} (2009), 865--907.
	
	\bibitem{GiG1} Ginzburg, V. and G\"{u}rel, B. Z., Relative Hofer-Zehnder capacity and periodic orbits in twisted cotangent bundles. {\it Duke Math. J. } {\bf 123} (2004), 1-47.
	
	\bibitem{Go} Gong, W., Infinitely many noncontractible closed magnetic geodesics on non-compact manifolds. arXiv:2006.16075, 2020.
	
     \bibitem{Go1} Gong, W., Symplectic deformations of Floer homology and non-contractible periodic orbits in twisted disc bundles. {\it Commun. Contemp. Math.} {\bf 23} (2021), 1950084.
     
     \bibitem{GoX} Gong, W. and Xue J., Floer homology in the cotangent bundle of a closed Finsler manifold and noncontractible periodic orbits {\it Nonlinearity}  {\bf 33} (2020), 6297--6348.
	
	
	
	\bibitem{HoZ} Hofer H. and Zehnder E., {\it Symplectic Invariants and Hamiltonian Dynamics}. Birkh\"{a}user  Verlag, Basel, 1994. xiv+341 pp.
	
	
	\bibitem{Kl} Klingenberg, W., {\it Lectures on closed geodesics}. Grundlehren der Mathematischen Wissenschaften, Springer-Verlag, Berlin-New York, 1978.
	
	
	\bibitem{Ma0} Ma\~{n}\'{e}, R., {\it Lagrangian flows: the dynamics of globally minimizing orbits}, International
	Conference on Dynamical Systems (Montevideo, 1995), Longman, Harlow, 1996,
	Reprinted in Bol. Soc. Brasil. Mat. (N.S.) {\bf 28} (1997), 141--153., pp. 120--131.
	
	
	\bibitem{Mat} Mather, J. N.,  Action minimizing invariant measures for positive definite Lagrangian systems, {\it Math. Z.} {\bf 207} (1991) 169--207.
	
	 \bibitem{Me1} Merry, W. J.,  Closed orbits of a charge in a weakly exact magnetic field. {\it Pacific J. Math.} {\bf  247} (2010), 189--212.
	 
	 \bibitem{Me2} Merry, W. J., On the Rabinowitz Floer homology of twisted cotangent bundles. {\it Calc. Var. Partial Differential Equations} {\bf 42} (2011), 355--404.
	
	\bibitem{No} Novikov, S., The Hamiltonian formalism and a many-valued analogue of Morse theory, {\it  Russian
    Math. Surveys} {\bf  37} (1982),  1--56.
	
	
	
	
	
	\bibitem{Pal0} Palais, R. S., Homotopy theory of infinite dimensional manifolds. 
    {\it Topology} {\bf 5} (1966), 1--16. 
    
    \bibitem{Pal} Palais, R. S., Critical point theory and the minimax principle. 1970 Global Analysis (Proc. Sympos. Pure Math., Vol. XV, Berkeley, Calif, 1968) pp. 185--212 Amer. Math. Soc., Providence, R.I.
	
	\bibitem{Pa} Paternain, M., Contractible periodic orbits of Lagrangian systems. {\it Bull. Aust. Math. Soc.} {\bf 99} (2019), 445--453. 
	
	
	 \bibitem{Sch} Schlenk, F., Applications of Hofer’s geometry to Hamiltonian dynamics,  {\it Comment. Math. Helv. }  {\bf 81} (2006) 105--121.
	 
	 \bibitem{Sc2} Schneider, M.,  Closed magnetic geodesics on $S^2$. {\it J. Differential Geom.}  {\bf  87} (2011),  343--388.
	
	 \bibitem{Sc3} Schneider, M.,  Closed magnetic geodesics on closed hyperbolic Riemann surfaces. {\it Proc. Lond. Math. Soc. (3)} {\bf 105} (2012),  424--446.
	 
	 \bibitem{Sp} Spanier, E., {\it Algebraic Topology}. New York, McGraw-Hill 1966.
	
	\bibitem{St1} Struwe, M., {\it Variational methods}. Applications to nonlinear partial differential equations and Hamiltonian systems. Springer-Verlag, Berlin, 1990. xiv+244 pp.
	
	\bibitem{St2} Struwe, M., Existence of periodic solutions of Hamiltonian systems on almost every energy surface. {\it Bol. Soc. Brasil. Mat. (N.S.)} {\bf  20} (1990), 49--58.
	
	\bibitem{SZ} Suhr, S. and Zehmisch,  K.,  Linking and closed orbits, {\it Abh. Math. Semin. Univ. Hambg.} {\bf  86} (2016), 133--150.
	
	 \bibitem{Ta0}  Taimanov, I. A., The principle of throwing out cycles in Morse-Novikov theory, {\it Soviet Math. Dokl.} {\bf 27} (1983), 43--46.
	
     \bibitem{Ta}  Taimanov, I. A., Closed extremals on two-dimensional manifolds, {\it Russian Math. Surveys} {\bf 47} (1992), 163--211.	

     \bibitem{Ta1}  Taimanov, I. A. , Closed non-self-intersecting extremals of multivalued functionals, {\it Siberian Math. J.} {\bf 33} (1992),  686--692.

      \bibitem{Ta2}  Taimanov, I. A., Periodic magnetic geodesics on almost every energy level via variational methods. {\it Regul. Chaotic Dyn.} {\bf 15} (2010), 598--605.

	\bibitem{Th} Thorbergsson, G., Closed geodesics on non-compact Riemannian manifolds. {\it Math. Z.} {\bf 159} (1978), 249--258.
	
	\bibitem{Us} Usher, M., Floer homology in disk bundles and symplectically twisted geodesic flows, {\it J. Mod. Dyn.} {\bf 3} (2009), 61--101.
	
\end{thebibliography}
\end{document}